\newfont{\bigbf}{cmbx10 scaled\magstep1}
\title{Equivariant Ricci flow with surgery 
and applications\\
 to finite group actions on 
geometric 3-manifolds}
\author{Jonathan Dinkelbach, Bernhard Leeb\footnote{
dinkelbach@mathematik.uni-muenchen.de, b.l@lmu.de}}
\date{January 9, 2009}
\def\makeop#1{\expandafter\def\csname #1\endcsname{\mathop{\mathrm{#1}}\nolimits}}
\newtheoremstyle{boldplain}
  {9pt}
  {9pt}
  {\itshape}
  {}
  {\bfseries}
  {.}
  {.5em}
  {\thmname{#1}\thmnumber{ #2}\thmnote{ (#3)}}%
\newtheoremstyle{bolddefinition}
  {9pt}
  {9pt}
  {}
  {}
  {\bfseries}
  {.}
  {.5em}
  {\thmname{#1}\thmnumber{ #2}\thmnote{ (#3)}}%
\theoremstyle{boldplain}
\newtheorem{thm}[equation]{Theorem}
\newtheorem*{thmE}{Theorem E}
\newtheorem*{thmH}{Theorem H}
\newtheorem{prop}[equation]{Proposition}
\newtheorem{add}[equation]{Addendum}
\newtheorem{lem}[equation]{Lemma}
\newtheorem{sublem}[equation]{Sublemma}
\newtheorem{corollary}[equation]{Corollary}
\newtheorem{cor}[equation]{Corollary}
\newtheorem{rem}[equation]{Remark}
\theoremstyle{bolddefinition}
\newtheorem{defn}[equation]{Definition}
\numberwithin{equation}{section}
\def\co{\colon\thinspace}
\def\C{{\mathbb C}}
\def\R{{\mathbb R}}
\def\Z{{\mathbb Z}}
\def\al{\alpha}
\def\ga{\gamma}
\def\Ga{\Gamma}
\def\de{\delta}
\def\eps{\epsilon}
\def\la{\lambda}
\def\si{\sigma}
\def\Si{\Sigma}
\def\Om{\Omega}
\def\epsseven{\eps_1^{(1)}}
\def\epseight{\eps_1^{(2)}}
\def\epsnine{\eps_1^{(3)}}
\def\epsten{\eps_1^{(4)}}
\def\epseleven{\eps_1^{(5)}}
\def\Rm{R}
\def\scal{S}
\def\3{\ss}
\def\acts{\curvearrowright}
\def\D{\partial}
\def\embed{\hookrightarrow}
\def\half{\frac{1}{2}}
\def\lra{\longrightarrow}
\def\ol{\overline}
\DeclareMathOperator{\Int}{int}
\DeclareMathOperator{\Point}{pt}
\def\rrad{\raisebox{-0.3ex}{\ensuremath{\widetilde{\phantom{\mathrm{rad}}}}}
\hspace{-3.0ex}\mathrm{rad}}
\newcommand{\neck}{\mathrm{neck}}
\newcommand{\isom}{\mathrm{isom}}
\newcommand{\hyp}{\mathrm{hyp}}
\newcommand{\sph}{\mathrm{sph}}
\newcommand{\nn}{\mathrm{nn}}
\newcommand{\tildeB}{\widetilde{B}}
\def\BI{\begin{itemize}}
\def\EI{\end{itemize}}
\begin{document}
\maketitle

\begin{abstract}    
\noindent 
We apply an equivariant version of Perelman's Ricci flow with surgery 
to study smooth actions by finite groups on closed 3--manifolds.
Our main result is that such actions on elliptic and hyperbolic 3--manifolds 
are conjugate to isometric actions. 
Combining our results with results 
by Meeks and Scott \cite{MeeksScott}, 
it follows that 
such actions on geometric 3--manifolds (in the sense of Thurston)  
are always geometric,
i.e.\ there exist invariant locally homogeneous Riemannian metrics. 
This answers a question posed by Thurston in \cite{Thurston_Bull}. 
\end{abstract}


\section{Introduction}

The main results of this paper 
concern smooth group actions on geometric 3--manifolds: 

\begin{thmE}[Actions on elliptic manifolds are standard] 
Any smooth action by a finite group on an elliptic
3--manifold is smoothly conjugate to an isometric action. 
\end{thmE}

\begin{thmH}[Actions on closed hyperbolic manifolds are standard]
Any smooth action by a finite group on a closed hyperbolic 3--manifold
is smoothly conjugate to an isometric action.
\end{thmH}

We also show 
(Theorem~\ref{thm:s2s1})
that smooth actions by finite groups on closed $S^2\times\R$--manifolds 
are geometric, 
i.e.\ there exist invariant Riemannian metrics 
locally isometric to $S^2\times\R$. 
See Meeks and Yau \cite{MeeksYau} for earlier results concerning this case. 

Corresponding results for the other five 3--dimensional Thurston geometries
had been obtained by Meeks and Scott in \cite{MeeksScott}.
Combining our results with theirs it follows 
that smooth actions by finite groups on closed geometric 3--manifolds 
are always geometric, 
i.e.\ there exist invariant locally homogeneous Riemannian metrics. 
This answers a question posed by Thurston in \cite{Thurston_Bull}. 

Theorem H extends to actions 
on compact 3--manifolds with boundary 
whose interior admits a complete hyperbolic structure of finite volume 
(Theorem~\ref{cuspcase}). 

Theorems E and H 
have been previously known in most cases.
For free actions,
they are due to Perelman 
in his fundamental work on the Ricci flow in dimension three
\cite{Perelman_entropy,Perelman_surgery,Perelman_extinction}. 
For orientation preserving non-free actions,
they have been proven by Boileau, Leeb and Porti \cite{orbi}
along the lines suggested by Thurston \cite{Thurston_symm} and 
based on Thurston's Hyperbolization Theorem for Haken manifolds,
using techniques
from 3--dimensional topology, 
the deformation theory of geometric structures and 
the theory of metric spaces with curvature bounded below. 
They have also been known in several cases 
for orientation non-preserving non-free actions. 

In this paper we give a unified approach 
by applying the Ricci flow techniques to the case of non-free actions. 
This also provides an alternative proof in the orientation preserving 
non-free case.  
Our argument is based on several deep recent results 
concerning the Ricci flow with cutoff on closed 3--manifolds, 
namely its long time existence for arbitrary initial metrics
(Perelman \cite{Perelman_surgery}, Kleiner and Lott \cite{KL},
Morgan and Tian \cite{MorganTian_poinc}, Bamler \cite{Bamler}),
its extinction in finite time 
on non-aspherical prime 3--manifolds 
(Perelman \cite{Perelman_extinction}, Colding and Minicozzi
\cite{ColdingMinicozzi_ext, ColdingMinicozzi_width}, 
Morgan and Tian \cite{MorganTian_poinc})
and, for Theorem H, the analysis of its asymptotic long time behavior (Perelman
\cite{Perelman_surgery}, Kleiner and Lott \cite{KL}).

Given a smooth action $\rho\co G\acts M$ by a finite group 
on a closed 3--manifold,
and given any $\rho$--invariant initial Riemannian metric $g_0$ on $M$,
there is no problem in running an equivariant Ricci flow with cutoff, 
because the symmetries are preserved between surgery times
and can be preserved while performing the surgeries. 
During the flow,
the underlying manifold and the action may change. 
In order to understand the initial action $\rho$,
one needs to compare the actions before and after a surgery, 
and to verify that, 
short before the extinction of connected components, 
the actions on them are standard. 
The main issue to be addressed here 
is that the caps occurring in highly curved regions 
close to the singularities of the Ricci flow 
may have nontrivial stabilizers 
whose actions one has to control. 
Since on an elliptic 3--manifold the Ricci flow goes extinct in finite time 
for any initial metric, 
one can derive Theorem E. 
If $M$ is hyperbolic,
its hyperbolic metric is unique up to diffeomorphisms by Mostow rigidity. 
It turns out that for large times the time slice is (again) 
diffeomorphic to $M$
and the Riemannian metric produced by the Ricci flow 
converges smoothly to the hyperbolic metric 
up to scaling and diffeomorphisms. 
This leads to Theorem H. 
Our methods apply also to actions on the non-irreducible prime 3--manifold 
$S^2\times S^1$
and its quotient manifolds 
because the Ricci flow goes extinct in this case, too. 

{\em Acknowledgements.}
We are grateful to Richard Bamler for many useful comments 
on an earlier version of this article. 
We thank the 
Deutsche Forschungsgemeinschaft 
(DFG-project LE 1312/1) 
and the Hausdorff Research Institute for Mathematics in Bonn 
for financial support.

\tableofcontents

\section{Topological preliminaries} \label{sec:prelim}
\subsection{Standard actions on geometric 3--manifolds}
\label{defstan}

Throughout this paper, 
we consider smooth actions 
by finite groups on 3--manifolds.

As e.g.\ in Meeks and Scott \cite{MeeksScott}, 
we call an action $\rho\co G\acts M^3$ 
on a closed connected 3--manifold {\em standard}
if it preserves a geometric structure 
in the sense of Thurston \cite{Scott,Thurston3M},
i.e.\ if there exists an invariant 
locally homogeneous Riemannian metric. 
Note that this requires the manifold itself to be geometric 
and the type of geometric structure is uniquely determined. 

In the case of the elliptic and hyperbolic geometries
the geometric structure on $M$ is unique up to diffeomorphism. 
(In the elliptic case, 
this follows from the isometry classification of elliptic manifolds,
see e.g.\ Thurston \cite[Theorem 4.4.14]{Thurston3M},
and the topological classification of lens spaces by Brody \cite{Brody},
see also Hatcher \cite[Theorem 2.5]{Hatcher}.
In the hyperbolic case,
this is a consequence of Mostow rigidity \cite{Mostow}.) 
This means that an action 
on an elliptic or on a closed hyperbolic 3--manifold
is standard 
if and only if it is smoothly conjugate to an isometric action. 

Sometimes, we will need to consider actions 
on a few simple noncompact manifolds 
or compact manifolds with nonempty boundary: 
We call an action on the (open or closed) unit ball {\em standard},
if it is smoothly conjugate to an orthogonal action.  
We call an action on the round cylinder $S^2\times\R$, 
or on one of the orientable $\Z_2$--quotients 
$S^2\times_{\Z_2}\R\cong\R P^3-\bar B^3$ 
and $S^2\times_{\Z_2}[-1,1]\cong \R P^3-B^3$ 
{\em standard}, 
if it is smoothly conjugate to an isometric action
(say, with respect to the $S^2\times\R$--structure). 

If $M$ is disconnected, we call an action $G\acts M$ {\em standard}
if for each connected component $M_0$ of $M$ the restricted action 
$\Stab_G(M_0)\acts M_0$ is standard.

\subsection{Equivariant diffeomorphisms of the 2--sphere are standard}
\label{sec:2sph}

We need the following 
equivariant version of a result of Munkres \cite{Munkres}
regarding isotopy classes of diffeomorphisms of the 2--sphere. 

\begin{prop}
\label{equivdiffeo2sph}
Let $\rho\co H\acts S^2$ be an orthogonal action of a finite group 
on the 2--dimensional unit sphere. 
Then every $\rho$--equivariant diffeomorphism $S^2\to S^2$
is $\rho$--equivari\-ant\-ly isotopic to an isometric one.  
\end{prop}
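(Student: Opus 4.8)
The plan is to reduce the statement to the classical (non-equivariant) theorem of Munkres, which says that every diffeomorphism of $S^2$ is isotopic either to the identity or to a fixed reflection, by showing that the isotopy can be carried out equivariantly. First I would set $f\co S^2\to S^2$ for the given $\rho$-equivariant diffeomorphism and observe that $f$ conjugates $\rho$ to itself; in particular $f$ normalizes the finite subgroup $\rho(H)\subset O(3)$ inside $\mathrm{Diff}(S^2)$. Since $f$ is equivariant, it descends to a homeomorphism $\bar f$ of the orbifold $O:=S^2/H$, and conversely equivariant isotopies of $f$ correspond to isotopies of $\bar f$ through orbifold diffeomorphisms. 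So the task becomes: show $\bar f$ is orbifold-isotopic to an isometry of the spherical orbifold $O$. The finite subgroups of $O(3)$ are well understood (cyclic, dihedral, and the symmetry groups of the Platonic solids, each possibly extended by $-\mathrm{id}$), so $O$ is one of a short explicit list of spherical 2-orbifolds — a sphere, disk, or $\mathbb{R}P^2$ with a few cone points and corner reflectors.

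The key steps, in order, would be: (1) reduce to the quotient orbifold $O$ and its diffeomorphism $\bar f$ as above; (2) note that $\bar f$ permutes the singular points of $O$ preserving their orders (cone angles / reflector structure), and after composing with an ambient isometry of $O$ we may assume $\bar f$ fixes each singular point and preserves a small invariant disk (resp. half-disk, at a corner) around it; (3) since the underlying space $|O|$ is $S^2$, $D^2$, or $\mathbb{R}P^2$, use the classification of mapping classes of these surfaces rel a finite marked set — for $S^2$ and $D^2$ with marked points the relevant mapping class groups are generated by braid/point-pushing moves, and for $\mathbb{R}P^2$ the mapping class group is trivial (Epstein) — to isotope $\bar f$, rel the singular points, to either the identity or, in the nonorientable or reflective cases, a single standard reflection, which is realized by an isometry of $O$; (4) lift the resulting orbifold isotopy to a $\rho$-equivariant isotopy of $f$ on $S^2$, landing on an isometric map. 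A mild technical point in step (2)–(3): one must check that the "point-pushing" isotopies available in the surface do not cost anything — i.e. that the relevant mapping class of the punctured underlying surface is trivial or realized isometrically — which is where the explicit short list of spherical orbifolds is used to rule out the troublesome cases.

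The main obstacle I expect is step (3): controlling the mapping class group of the quotient orbifold. For most $H$ (e.g. when $O$ is a sphere with at most three cone points, or a disk with few corner reflectors) the orbifold mapping class group is finite and generated by obvious isometries, so $\bar f$ is automatically isotopic to an isometry; but when $O$ has four or more singular points on $S^2$ one gets genuine braiding, and one must argue that the braid produced by $\bar f$ is trivial — this should follow because $f$ was globally equivariant (so the braid, when lifted, must commute with the deck action), but making this rigorous is the delicate point. A clean way to organize it is to go case by case through the finite subgroups of $O(3)$: the cyclic and dihedral families, and then the finitely many polyhedral groups, in each case identifying $O$ and invoking the appropriate surface mapping-class computation. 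An alternative, possibly slicker route that avoids orbifold subtleties: choose a $\rho$-invariant Riemannian metric, note $f$ pulls it back to another $\rho$-invariant metric, use the equivariant uniformization / Poincaré–Koebe theorem (an invariant metric on $S^2$ is conformally, $\rho$-equivariantly, the round one — here one uses that $\rho(H)$ acts by isometries of the round metric and that the conformal factor can be averaged to be $\rho$-invariant) to reduce $f$ to a conformal, hence Möbius, $\rho$-equivariant map, and then connect that Möbius map to an isometry within the $\rho$-equivariant Möbius maps; I would likely present whichever of these two gives the shorter rigorous argument, with the uniformization approach being the more self-contained one.
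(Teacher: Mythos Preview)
Your orbifold-quotient approach is essentially the paper's. The paper likewise reformulates the statement as one about self-diffeomorphisms of the quotient $O=S^2/H$, lists the closed spherical 2-orbifolds explicitly, and shows case by case that any such diffeomorphism is isotopic to an isometry. One clarification: your anticipated ``main obstacle'' never arises, because a closed spherical 2-orbifold has at most three cone points (by Gauss--Bonnet, or just by inspecting the list), so there is no genuine braiding to control; with $\leq 3$ cone points on $S^2$, $\leq 1$ cone point on $\R P^2$, or $\leq 1$ interior cone point in a reflector disk, the relevant orbifold mapping class is always realized isometrically. Where you would cite mapping-class-group results for marked surfaces, the paper argues directly: first a local linearization lemma makes $\bar f$ equal to the identity near each singular point, and then an arc-cutting argument together with Alexander's lemma isotopes $\bar f$ to the identity on the remaining planar piece. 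Either route works; the paper's is more self-contained, while yours is quicker if one is willing to quote the surface mapping-class computations. Your uniformization alternative is a genuinely different and viable idea, but note that its last step---connecting a $\rho$-equivariant M\"obius transformation to a $\rho$-equivariant isometry---still requires a short argument about the component structure of the centralizer of $\rho(H)$ in the M\"obius group.
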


Equivalently, in terms of quotient orbifolds:

\begin{prop}
\label{diffeo2orb} 
Any diffeomorphism $\phi\co \mathcal{O}_1\to \mathcal{O}_2$ 
of closed spherical 2--orbifolds is isotopic to an isometry.
\end{prop}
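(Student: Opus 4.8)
\emph{Proof plan.}
The two formulations are equivalent by orbifold covering theory: a diffeomorphism $\phi\co\mathcal O_1\to\mathcal O_2$ lifts to a diffeomorphism $S^2\to S^2$ intertwining the two deck groups, an isotopy through orbifold maps lifts as well, and an element of $O(3)$ descends to an isometry of the quotient orbifold while an isometry lifts to $O(3)$; so I argue with orbifolds. A diffeomorphism of orbifolds preserves the stratification of the singular locus by local isotropy type, and the singular locus of a closed spherical $2$--orbifold $S^2/\Gamma$ consists of finitely many cone points (each labelled by its order) together with the mirror locus, a $1$--manifold in $\partial|\mathcal O|$ carrying finitely many corner reflectors (each labelled by its order). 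Hence $\mathcal O_1$ and $\mathcal O_2$ have the same underlying surface --- one of $S^2$, $\R P^2$, $D^2$ --- and the same singular data; by the classification of finite subgroups of $O(3)$ up to conjugacy, $\Gamma$ is then determined up to conjugacy, so there is an isometry $\iota\co\mathcal O_1\to\mathcal O_2$. Replacing $\phi$ by $\iota^{-1}\phi$ reduces everything to: every self--diffeomorphism $\psi$ of a closed spherical $2$--orbifold $\mathcal O$ is isotopic to an isometry of $\mathcal O$.

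I would first isotope $\psi$ into a normal form along the singular locus. Near a cone point $\mathcal O$ is $\R^2/\Z_p$, near an interior mirror point it is $\R^2$ modulo a reflection, and near a corner reflector it is $\R^2/D_n$; in each case $\psi$ lifts to an equivariant diffeomorphism germ $f$ of $(\R^2,0)$, and the equivariant Alexander isotopy $f_t(x)=t^{-1}f(tx)$, $t\in(0,1]$, connects $f$ equivariantly to its linearisation $df_0$, which lies in the centraliser of the local group in $GL(2,\R)$; since that centraliser retracts onto the centraliser in $O(2)$, one may take $df_0$ to be an isometric germ. Performing these isotopies simultaneously and using uniqueness of collars, after an isotopy $\psi$ coincides on a neighbourhood $N$ of the singular locus with an isometry of $N$; and because $\psi$ permutes the cone points, mirror edges and corner reflectors in the same way as some global isometry of $\mathcal O$ does --- a finite check, the only freedom being rotations, reflections and the permutations allowed by the orders --- we may arrange that $\psi$ agrees on $N$ with a global isometry $\iota'$ of $\mathcal O$. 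Then $\iota'^{-1}\psi$ is the identity on $N$, hence supported in the manifold part $\mathcal O\setminus N$.

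Going through the finite list of closed spherical $2$--orbifolds, $\mathcal O\setminus N$ is a disk, an annulus, a M\"obius band or a pair of pants (the last only for $S^2(2,2,n)$ and $S^2(2,3,n)$), except when the singular locus is empty, i.e.\ $\mathcal O=S^2$ or $\mathcal O=\R P^2$. For $\mathcal O=S^2$ one uses Smale's theorem $\Diff(S^2)\simeq O(3)$, and for $\mathcal O=\R P^2$ the classical fact that its mapping class group is trivial. In the remaining cases a diffeomorphism of a disk fixing the boundary is isotopic rel $\partial$ to the identity (Smale), while for the annulus, the M\"obius band and the pair of pants the mapping class group rel boundary is generated by Dehn twists along curves parallel to the boundary; each such curve bounds a cone disk $\R^2/\Z_p$ of $\mathcal O$ on the side away from the manifold part, and the corresponding Dehn twist is isotopic to the identity in $\Diff(\mathcal O)$ --- it is the time--$2\pi$ map of the flow of the rotation vector field about the axis of that cone point, cut off near the curve, and a $2\pi$ rotation descends to the identity on $\R^2/\Z_p$. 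Thus $\iota'^{-1}\psi$ is isotopic to the identity of $\mathcal O$, so $\psi$ is isotopic to $\iota'$.

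The real content is the bookkeeping in the last two paragraphs: checking case by case that the isometric germs produced by linearisation, together with the induced permutation of the singular data, assemble into one global isometry --- the orbifolds with non--orientable underlying surface and the footballs $S^2(p,p)$, whose isometry group is positive--dimensional, need the most care --- and that all mapping classes obtained by twisting around cone disks and along mirror collars die in $\Diff(\mathcal O)$. The clean reason for the latter is the rotation--flow argument above, which exploits the local circle symmetry at a cone point; this, rather than any hard estimate, is the crux of the proof.
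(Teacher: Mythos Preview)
Your approach is correct and shares the paper's skeleton: reduce to a self-diffeomorphism, normalise it to the identity on a neighbourhood of the singular locus, then handle the complementary surface case by case. The execution of that last step differs. The paper (Lemma~\ref{isotoid_sphdisk}) uses direct innermost-arc arguments to reduce the sphere-with-three-cone-points and disk-with-one-cone-point cases to the smooth disk, then cites Munkres and Epstein; you instead quote the mapping class groups of the disk, annulus, M\"obius band and pair of pants rel boundary and kill the boundary-parallel Dehn twist generators in $\Diff(\mathcal O)$ via rotation about the adjacent cone point. Both routes arrive at the same crux --- the paper's Remark after Lemma~\ref{isotoid_sphdisk} already notes that the isotopy must rotate around cone points --- but yours is more conceptual at the cost of importing the MCG computations as black boxes, while the paper's is self-contained.

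One point to sharpen: a single Dehn twist along $\gamma=\partial(\R^2/\Z_p)$ is undone by rotating the cone through its cone angle $2\pi/p$, not $2\pi$. Rotation of $\R^2$ by $2\pi$ is already the identity upstairs, so the statement that it ``descends to the identity'' is vacuous, and the cut-off time-$2\pi$ flow produces $\tau_\gamma^{\,p}$ rather than $\tau_\gamma$. What you actually need is that rotation by $2\pi/p$ --- a generator of $\Z_p$ --- is nontrivial on $\R^2$ but descends to the identity on the quotient; the associated isotopy then kills exactly one Dehn twist. With this correction your argument goes through.
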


We recall that a diffeomorphism of orbifolds is a homeomorphism which
locally lifts to an equivariant diffeomorphism of orbifold charts.  
In particular,
it maps the singular locus to the singular locus and preserves the
types of singular points. 
\begin{proof}
For ${\mathcal O}_1\cong{\mathcal O}_2\cong S^2$
the result is proven in Munkres \cite{Munkres},
see also Thurston \cite[Theorem 3.10.11]{Thurston3M},
and for $\R P^{2}$ in Epstein \cite[Theorem 5.5]{Epstein_Curves}. 
We therefore assume that the orbifolds have singularities
and extend the result to this case 
using standard arguments from 2--dimensional topology. 

Let us first recall 
the list of non-smooth closed spherical 2--orbifolds.
By a cone point, we mean an isolated singular point. 
\begin{itemize}
\item The 2--sphere with two or three cone points,
and the projective plane with one cone point.
\item The closed 2--disk with reflector boundary, at most three corner
reflector points and at most one cone point in the interior.
(The cone point can only occur 
in the case of at most one corner reflector
and must occur if there is precisely one corner reflector.) 
\end{itemize}
From the classification we observe that 
a closed spherical 2--orbifold is determined up to isometry
by its underlying surface and the types of the singular points. 
Hence $\mathcal{O}_1$ and $\mathcal{O}_2$ must be isometric.
After suitably identifying them, 
we may regard $\phi$ as a self-diffeomorphism 
of a closed spherical 2--orbifold $\mathcal{O}$
which fixes every cone point and corner reflector. 
We can arrange moreover
that $\phi$ preserves orientation if $\mathcal{O}$ is orientable,
and locally preserves orientation near the cone point 
if $\mathcal{O}$ is a projective plane with one cone point. 
It is then a consequence that, 
when the singular locus is one-dimensional,
i.e.\ when ${\mathcal O}$ has reflector boundary, 
$\phi$ preserves every singular edge (and circle) and acts on it 
as an orientation preserving diffeomorphism. 

\begin{lem}
\label{isotoid_pt}
Let $D=D(1)$ be the open unit disk 
and let $D'=D(r')$, $0<r'\leq1$, be a round subdisk centered at the origin.
Suppose that $\rho\co H\acts D$ is an orthogonal action 
of a finite group, 
and that $\phi\co D'\to D$ is an orientation preserving $\rho$--equivariant 
smooth embedding fixing $0$. 
Then $\phi$ is isotopic, 
via a compactly supported $\rho$--equivariant isotopy,
to a $\rho$--equivariant smooth embedding $D'\to D$
which equals $\pm \id_{D'}$ near $0$ and, 
if $\rho$ preserves orientation,
$+\id_{D'}$.  
\end{lem}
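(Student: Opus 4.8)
The plan is to reduce everything to the behavior of $\phi$ on the tangent space at $0$ and on an invariant annulus. First I would observe that, by equivariant averaging of a cutoff function (or by first restricting to a smaller invariant subdisk $D(r'')$ on which $\phi$ is close to its linearization), it suffices to produce the desired form near $0$; an equivariant collar argument then lets us extend any isotopy by the identity outside a larger disk, giving compact support. So the first step is: replace $\phi$ by $\phi\circ(d\phi_0)^{-1}$, which is legitimate because $d\phi_0\colon\R^2\to\R^2$ lies in the linear group commuting with the orthogonal representation $\rho|_{T_0D}$, hence extends to a $\rho$--equivariant linear embedding of $D'$ into $D$. This equivariant linear map is $\rho$--equivariantly isotopic through linear maps to an element of $O(2)$ commuting with $\rho$ — because the centralizer of a finite subgroup of $O(2)$ in $GL_2(\R)$ deformation retracts onto its intersection with $O(2)$ (for irreducible $\rho$ the centralizer is $\R^\times$ times a rotation part; for the trivial or a reflection representation it is all of $GL_2(\R)$ or the diagonal matrices, all of which retract onto $O(2)$ suitably). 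Composing, we may therefore assume from now on that $d\phi_0=\mathrm{id}$ (or, in the orientation--reversing allowed case, $d\phi_0\in O(2)$ commuting with $\rho$, which we absorb into the statement's "$\pm\id$" — more precisely we handle the reflection case by first conjugating so that $d\phi_0=\mathrm{diag}(1,-1)=-\mathrm{id}$ on the relevant line; the two allowed normal forms $\pm\mathrm{id}$ are exactly the two components of $O(2)\cap\mathrm{centralizer}$).

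Next, with $d\phi_0=\mathrm{id}$, I would use the equivariant isotopy $\phi_t(x):=t^{-1}\phi(tx)$ for $t\in(0,1]$, which is $\rho$--equivariant since $\rho$ is linear, satisfies $\phi_1=\phi$, and converges in $C^1$ as $t\to0$ to the linear map $d\phi_0=\mathrm{id}$ on any fixed compact subdisk. Thus for $t$ small, $\phi_t$ is $C^1$--close to $\mathrm{id}$ on $\bar D(r'')$, in particular it has no fixed points other than $0$ there and is a diffeomorphism onto its image containing $\bar D(\rho_0)$ for some $\rho_0>0$. Now I would do the standard "Alexander-trick relative to a smaller disk": fix an equivariant smooth function $\lambda\colon D'\to[0,1]$, $\rho$--invariant, equal to $1$ on $D(\rho_0/2)$ and supported in $D(\rho_0)$, and define the isotopy $H_s(x)=\phi_t\big((1-s\lambda(x))\cdot(\text{rescaled }x)\big)$ — more cleanly, use that $\phi_t$ restricted to $\bar D(\rho_0)$ is isotopic rel a neighborhood of $0$'s image to the map that is $\mathrm{id}$ near $0$ by linearly interpolating $\phi_t$ with its linearization $\mathrm{id}$ via $x\mapsto (1-\beta(|x|))\phi_t(x)+\beta(|x|)x$ where $\beta$ is an invariant bump equal to $1$ near $0$; the interpolant stays an equivariant embedding because $\phi_t$ is $C^1$--close to $\mathrm{id}$. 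This produces, after reparametrizing $t$ back up to $1$ and concatenating, a compactly supported $\rho$--equivariant isotopy from $\phi$ to a map equal to $\mathrm{id}$ (resp.\ the chosen $\pm\mathrm{id}$) on a neighborhood of $0$.

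The main obstacle I expect is bookkeeping equivariance throughout, especially making sure the interpolation $x\mapsto (1-\beta(|x|))\phi_t(x)+\beta(|x|)x$ is genuinely an embedding: linear combinations of embeddings need not be embeddings, so this step really does need the $C^1$--smallness coming from the rescaling $\phi_t\to d\phi_0$, together with the fact that both $\phi_t$ and $\mathrm{id}$ fix $0$ and have derivative close to $\mathrm{id}$, so on the relevant small disk the straight--line homotopy between them is through embeddings (one checks the derivative of the interpolant stays invertible and the map stays injective on a small enough disk, uniformly). The equivariance of each intermediate map is automatic since $\rho$ acts linearly, so it commutes with the radial cutoffs $\beta$, $\lambda$ (which depend only on $|x|$) and with the Euclidean scalings $x\mapsto tx$; the only place one uses the finiteness and structure of $H$ is in the first paragraph, to identify the centralizer of $\rho$ in $GL_2(\R)$ and see it retracts onto $O(2)$, pinning down that the only obstruction is the sign, i.e.\ the component of $d\phi_0$ in that centralizer — which is exactly what the "$\pm\id$, and $+\id$ if $\rho$ preserves orientation" clause records.
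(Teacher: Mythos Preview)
Your argument is correct and follows the same two-step strategy as the paper: linearize $\phi$ near $0$, and analyze which orientation-preserving linear maps commute with $\rho(H)$. The paper runs these in the opposite order --- it first interpolates $\phi$ with $d\phi_0$ via $\phi_t=\phi+t\,\theta_\lambda(d\phi_0-\phi)$ with a rotationally symmetric bump $\theta_\lambda$ of shrinking support (so $\|d\phi_t-d\phi\|\leq C\lambda t$ forces embeddedness), and only afterwards isotopes the resulting linear germ $A=d\phi_0$ to $\pm\id$ by the same centralizer analysis you sketch. Your rescaling $\phi_t(x)=t^{-1}\phi(tx)$ followed by a straight-line interpolation accomplishes the same thing; doing the linearization first, as the paper does, avoids the domain/range bookkeeping you worry about when precomposing with linear maps.

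One point to clean up: your parenthetical about the ``orientation-reversing allowed case'' and ``$d\phi_0=\mathrm{diag}(1,-1)$'' is garbled. By hypothesis $\phi$ is orientation-preserving, so $d\phi_0\in GL_2^+(\R)$ always, and $\mathrm{diag}(1,-1)$ is impossible. The $-\id$ outcome arises not from any orientation reversal of $\phi$ but from the structure of the centralizer of $\rho(H)$ in $GL_2^+(\R)$: when $\rho(H)$ contains a reflection (and no rotation of order $\geq 3$), that centralizer is the set of diagonal matrices of positive determinant, which has two components retracting to $\id$ and $-\id$ respectively. This is exactly the case distinction the paper makes.
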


\begin{proof}(Compare e.g.\ \cite[end of proof of Lemma 3.10.12]{Thurston3M}).
We may first isotope $\phi$ to make it linear near $0$
by interpolating with its differential $d\phi_0$ at $0$.
Indeed, using a rotationally symmetric smooth test function $\theta$ on $D'$,  
we put 
\begin{equation*}
\phi_t :=\phi+t\theta_{\la}(d\phi_0-\phi)
\qquad\qquad 
0\leq t\leq1
\end{equation*}
where $\theta_{\la}(x):=\theta(\frac{x}{\la})$. 
Then 
$\|d\phi_t-d\phi\|\leq C\la t$,
and for sufficiently small $\la>0$ 
we obtain an isotopy.

Assume now that $\phi$ agrees near $0$
with an orientation preserving $\rho$--equivariant linear map $A$. 
If $\rho(H)$ contains reflections,
then $A$ preserves a line 
and, if $\rho(H)$ also contains a rotation of order $\geq3$, 
is a dilation.
One can equivariantly isotope $\phi$ 
to make it equal to $\pm \id$ near $0$.
If $\rho(H)$ preserves orientation,
then $A$ is a homothety if $|\rho(H)|\geq3$,
and an arbitrary orientation preserving linear automorphism otherwise. 
In both cases, 
one can equivariantly isotope $\phi$ 
to make it equal to $\id$ near $0$.
\end{proof}

Due to Lemma~\ref{isotoid_pt}, 
we may assume after applying a suitable isotopy,
that $\phi$ equals the identity in a neighborhood 
of every cone point and every corner reflector. 
By sliding along the singular edges,  
we may isotope $\phi$ 
further so that it fixes the singular locus pointwise.
Since $\phi$ preserves orientation, if ${\mathcal O}$ has boundary reflectors,
an argument as in the proof of Lemma~\ref{isotoid_pt} (but simpler)
allows to isotope $\phi$ 
so that it fixes a neighborhood of the singular locus pointwise. 

\begin{lem}
\label{isotoid_sphdisk}
(i)
Suppose that ${\mathcal O}$ is a 2--sphere with two or three cone points
and that $\phi\co {\mathcal O}\to{\mathcal O}$ 
is an orientation preserving diffeomorphism 
which fixes every cone point.
Then $\phi$ is isotopic to the identity. 

(ii)
Suppose that $D$ is a closed 2--disk with at most one cone point
and that $\phi\co D\to D$ is an orientation preserving diffeomorphism
fixing a neighborhood of $\D D$ pointwise. 
(It must fix the cone point if there is one.) 
Then $\phi$ is isotopic to the identity
by an isotopy supported on the interior of $D$. 

(iii)
Suppose that $M$ is a closed M\"obius band
and that $\phi\co M\to M$ is a diffeomorphism
fixing a neighborhood of $\D M$ pointwise. 
Then $\phi$ is isotopic to the identity
by an isotopy supported on the interior of $M$. 
\end{lem}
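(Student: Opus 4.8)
The plan is to handle all three parts by a common scheme: use Lemma~\ref{isotoid_pt} to reduce $\phi$ to a diffeomorphism that is the identity near the boundary of a planar subsurface (a disk, an annulus, or a pair of pants), invoke the classical description of the mapping class group rel boundary of such a surface, and finally remove the resulting boundary Dehn twists by a ``spinning disk'' isotopy compatible with the orbifold structure.

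For (i), I would first apply Lemma~\ref{isotoid_pt} in the orbifold chart at each cone point $c_i$ --- the local group being the cyclic rotation group, which is orthogonal and orientation preserving, so the lemma produces $+\id$ near $c_i$ --- to isotope $\phi$, through orbifold diffeomorphisms, so that it equals the identity on a small round disk $\tilde D_i$ about $c_i$. Then $\phi$ is supported on the planar surface $P:=\mathcal O\setminus\bigcup_i\Int\tilde D_i$, an annulus if there are two cone points and a pair of pants if there are three, and it fixes $\D P$ pointwise. The mapping class group rel boundary of an annulus (resp.\ of a pair of pants) is free abelian, generated by the Dehn twists $T_i$ about the boundary circles $\D\tilde D_i$, so $\phi|_P$ is isotopic rel $\D P$ to a product $\prod_i T_i^{a_i}$; extending by the identity over the caps, where $\phi$ is already the identity, this is an orbifold isotopy, whence $\phi\simeq\prod_i T_i^{a_i}$. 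Each $T_i^{a_i}$ is in turn isotopic to the identity by spinning a slightly larger concentric disk $\tilde D_i'$: in polar coordinates, at time $s$ rotate the circle of radius $r$ by $2\pi a_i s\,\beta(r)$, where $\beta\equiv1$ near $c_i$ and $\beta\equiv0$ near $\D\tilde D_i'$. This isotopy is supported in $\tilde D_i'$ and is a rigid rotation near $c_i$, hence lifts through the local chart and is an orbifold isotopy; running all of them simultaneously (the $\tilde D_i'$ being disjoint) gives $\prod_i T_i^{a_i}\simeq\id$, and hence $\phi\simeq\id$. Part (ii) follows the same pattern: if $D$ has no cone point, $\phi\simeq\id$ rel $\D D$ already by Smale's theorem that $\Diff(D^2,\D D^2)$ is connected; if $D$ has one cone point $c_0$, Lemma~\ref{isotoid_pt} makes $\phi$ the identity near $c_0$, so $\phi$ is supported on an annulus with both boundary circles fixed, hence isotopic rel that annulus to a power of the core twist, which is then killed by the spinning isotopy of a disk about $c_0$; all isotopies are supported in $\Int D$.

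For (iii), I would argue that $\phi$ induces the identity on $\pi_1(M)\cong\Z$: this is forced, since $\phi$ fixes a neighborhood of $\D M$ and the inclusion $\D M\hookrightarrow M$ is multiplication by $2$ on $\pi_1$. As $M$ is aspherical (with aspherical boundary), $\phi$ is therefore homotopic to $\id$ rel $\D M$, and by the theorem that homotopic diffeomorphisms of a compact surface are isotopic rel boundary (Epstein \cite{Epstein_Curves}) one obtains an isotopy rel $\D M$; a standard collar argument then promotes it to an isotopy supported in $\Int M$. (Equivalently one can pass to the orientation double cover $S^1\times[-1,1]$ with deck transformation $\tau(\theta,t)=(\theta+\pi,-t)$, lift $\phi$ to the unique lift equal to the identity near the boundary, check this lift is $\tau$--equivariant, and note that equivariance under the orientation-reversing $\tau$ forces its twisting number, hence that of $\phi$, to vanish.)

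The mapping class group inputs being classical, the real content sits at the two places where a residual Dehn twist has to be removed. The point to get right in (i) and (ii) is that a Dehn twist about a circle bounding a disk that contains a cone point $c$ is killed by an \emph{orbifold} isotopy --- the spinning construction does this precisely because near $c$ it is a rigid rotation and so lifts through the local chart --- together with keeping the supports of the various spins disjoint. The analogous point in (iii) is the triviality rel boundary of the boundary-parallel Dehn twist of the M\"obius band; I expect the aspherical-plus-Epstein argument to be the cleanest way to see it, avoiding the need to build an equivariant isotopy on the double cover.
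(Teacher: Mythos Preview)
Your proof is correct but follows a genuinely different route from the paper's. The paper argues by cutting along arcs: for (i) it chooses an arc $\gamma$ between two cone points avoiding the third, uses an innermost-disk argument to eliminate intersections of $\phi(\gamma)$ with $\gamma$, and isotopes $\phi$ to be the identity near $\gamma$, thereby reducing (i) to (ii); for (ii) with a cone point it similarly cuts along an arc from $\partial D$ to the cone point to reduce to the smooth disk, and then invokes Munkres. Part (iii) is in both cases a direct appeal to Epstein.

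Your approach instead localizes first via Lemma~\ref{isotoid_pt} and then invokes the known mapping class groups of the annulus and the pair of pants rel boundary, killing the residual boundary Dehn twists by the explicit spinning isotopy around the cone points. This is cleaner and more structural once one grants those MCG computations, and it makes transparent the remark following the lemma that the isotopy must in general rotate around cone points. The paper's arc-cutting argument, by contrast, is more self-contained: it needs nothing beyond Alexander-type innermost-disk moves and Munkres' theorem for the smooth disk, so it avoids importing the MCG of the pair of pants as a black box. Both approaches ultimately rest on the same base case (the contractibility of $\Diff(D^2,\partial D^2)$), reached by different reductions.
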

\begin{rem}
In case (i) 
and in case (ii) when there is a cone point, 
the isotopy will in general not be supported away from the cone points
but must rotate around them (Dehn twists!). 
\end{rem}
\begin{proof}
(i) It suffices to consider the case of three cone points.
Let us denote them by $p,q$ and $r$.
We choose a smooth arc $\ga$ from $p$ to $q$ avoiding $r$.
We may assume that the image arc $\phi(\ga)$ is transversal to $\ga$. 
Let $x_1,\dots,x_n$, $n\geq0$, denote the interior intersection points 
of $\ga$ and $\phi(\ga)$ 
numbered according to their order along $\phi(\ga)$.
Set $x_{0}=p$ and $x_{n+1}=q$.

Suppose that the number $n$ of transverse intersections  
cannot be decreased by isotoping $\phi$,
and that $n\geq1$.
For $i=0$ and $n$,  
we consider the subarc $\al_i$ of $\phi(\ga)$ from $x_i$ to $x_{i+1}$. 
Let $\beta_i$ be the subarc of $\ga$ with the same endpoints. 
We denote by $D_i$ the disk bounded by the circle $\al_i\cup\beta_i$ 
whose interior is disjoint from $\ga$, 
i.e.\ such that $p\not\in D_n$ and $q\not\in D_0$. 
This choice implies 
that the disks $D_0$ and $D_n$ have disjoint interiors,
because $\D D_0\cap \Int(D_n)=\emptyset$ 
and $\D D_n\cap \Int(D_0)=\emptyset$. 
(Note however, that they may contain other subarcs of $\phi(\ga)$.) 
It follows that at least one of the disks $D_0$ and $D_n$ 
does not contain the third cone point $r$.  
We then can push this disk through $\ga$ 
by applying a suitable isotopy of $\phi$
and thereby reduce the number of intersection points,
a contradiction. 
(Obviously, we can do the same if there are just two cone points $p$ and $q$.) 

This shows that we can isotope $\phi$
such that $\phi(\ga)$ and $\ga$ have no interior intersection points at all.
We may isotope further so that $\phi(\ga)=\ga$ and, 
since $\phi$ preserves orientation, 
even so that $\phi$ fixes a neighborhood of $\ga$ pointwise,
cf.\ Lemma~\ref{isotoid_pt}.
This reduces our assertion to case (ii). 

(ii)
Suppose first that $D$ has a cone point $p$. 
We then proceed as in case (i).
This time we choose $\ga$ to connect a point on $\D D$ to $p$.
If $\phi(\ga)$ intersects $\ga$ transversally,
we can remove all intersection points by applying suitable isotopies
supported away from $\D D$. 
Since $\phi$ preserves orientation, 
we can isotope $\phi$ such that it equals the identity 
in a neighborhood of $\D D\cup\ga$. 
This reduces our assertion 
to the case of a smooth disk. 
In this case,
the result is proven in Munkres \cite[Theorem 1.3]{Munkres},
see also Thurston \cite[end of proof of Theorem 3.10.11]{Thurston3M}.

(iii)
This fact is proven in Epstein \cite[Theorem 3.4]{Epstein_Curves}. 
\end{proof} 

The claim of Proposition~\ref{diffeo2orb} now follows
from part (i) of Lemma~\ref{isotoid_sphdisk}
if ${\mathcal O}$ is a sphere with two or three cone points,
from part (iii) if ${\mathcal O}$ is a projective plane with one cone point,
and from part (ii) if ${\mathcal O}$ has boundary reflectors. 
Note that we have to use isotopies which rotate around cone points. 
Isotopies supported away from the singular locus do not suffice. 

This concludes the proof of Proposition~\ref{diffeo2orb}.
\end{proof}  

We will use the following consequences of Proposition~\ref{equivdiffeo2sph}
in section~\ref{sec:cutoff}.

\begin{cor}
\label{exttoball}
(i) 
Given two isometric actions $\rho_1,\rho_2\co H\acts\bar B^3(1)$ 
of a finite group $H$ on the closed Euclidean unit ball, 
any $(\rho_1,\rho_2)$--equivariant diffeomorphism $\al\co \D B\to\D B$
extends to a $(\rho_1,\rho_2)$--equivariant diffeomorphism 
$\hat\al\co \bar B\to\bar B$. 

(ii) 
The same assertion with $\bar B^3(1)$ replaced by 
the complement $\R P^3-B^3$ of an open round ball with radius $<\frac{\pi}{2}$ 
in projective 3--space equipped with the standard spherical metric. 
\end{cor}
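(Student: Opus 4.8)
The plan is to first straighten $\al$ equivariantly to an isometry of the boundary $2$--sphere, and then to extend it over the interior by interpolating in the radial direction --- which makes sense precisely because the actions $\rho_i$ are isometric and hence radial. For (i): since $\rho_1,\rho_2$ are isometric on $\bar B^3(1)$ they fix $0$, lie in $O(3)$, and restrict on $\D B=S^2$ to orthogonal actions $\bar\rho_i\co H\acts S^2$ of which $\rho_i$ is the linear cone. The equivariant diffeomorphism $\al$ descends to a diffeomorphism $\bar\al$ of the spherical $2$--orbifolds $\mathcal O_i=S^2/\bar\rho_i(H)$, which by Proposition~\ref{diffeo2orb} is isotopic to an isometry. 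I would lift such an orbifold isotopy $(\bar\al_t)_{t\in[0,1]}$, starting from $\bar\al$, to a continuous family of diffeomorphisms $h_t\co S^2\to S^2$ with $h_0=\al$; then for each $h\in H$ the path $t\mapsto h_t\,\bar\rho_1(h)\,h_t^{-1}$ covers $\id_{\mathcal O_2}$, hence takes values in the finite deck group $\bar\rho_2(H)$ and is therefore constant, equal to $\bar\rho_2(h)$. Thus every $h_t$ is $(\bar\rho_1,\bar\rho_2)$--equivariant, while $h_1$, covering an isometry of $\mathcal O_2$, is the restriction of some $\beta\in O(3)$. (This is just the two--action version of Proposition~\ref{equivdiffeo2sph}.)

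Next I would extend. Identifying $\bar B^3(1)\setminus\{0\}$ with $S^2\times(0,1]$ via $x\mapsto(x/|x|,|x|)$, both $\rho_i$ act only on the first factor, by $\bar\rho_i$. I would pick a smooth $\la\co(0,1]\to[0,1]$ with $\la\equiv1$ on $(0,\tfrac12]$ and $\la\equiv0$ near $1$, and set $\hat\al(x,s):=(h_{\la(s)}(x),\,s)$ for $\tfrac12\le|x|\le1$ and $\hat\al(x):=\beta(x)$ for $|x|\le\tfrac12$. On the sphere $\{|x|=\tfrac12\}$ both formulas equal $x\mapsto\beta(x)$, with all radial derivatives matching (there $\la\equiv1$ and $\beta$ is linear), so $\hat\al$ is a well defined smooth self-map of $\bar B^3(1)$ restricting to $\al$ on $\D B$; it preserves $|x|$ and restricts to a diffeomorphism of each sphere $\{|x|=s\}$, hence is a diffeomorphism of $\bar B^3(1)$, the inverse being given by the analogous formula. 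Equivariance holds on $\{\tfrac12\le|x|\le1\}$ because each $h_t$ is $(\bar\rho_1,\bar\rho_2)$--equivariant, and on $\{|x|\le\tfrac12\}$ because $\beta\,\bar\rho_1(h)=\bar\rho_2(h)\,\beta$. This gives~(i).

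For (ii) I would argue the same way. Write $P=\R P^3-B^3$ for the complement of an open metric ball of radius $r<\tfrac\pi2$ about a point $p$; it is the orientable twisted $I$--bundle over an embedded $\R P^2$ (its ``core''), with $\D P\cong S^2$. An isometric $H$--action on $P$ is the restriction of an isometric action on $\R P^3$ fixing $p$; writing $\R^4=\R e_0\oplus\R^3$ with $p=[e_0]$, it has the form $h\mapsto\mathrm{diag}(1,A_i(h))$ for a homomorphism $A_i\co H\to O(3)$, and along the geodesics issuing from $p$ it acts only on the $S^2$ of directions, by $A_i$. As in part~(i) I would isotope $\al$ on $\D P=S^2$ equivariantly to an orthogonal $\beta\in O(3)$, funnel this isotopy in an outer collar $S^2\times[0,\eta)$ of $\D P$ (radial coordinate $=$ distance from $\D P$), and on the complementary sub-bundle $P'\cong\R P^3-B'$ (removing a slightly larger round ball $B'$ about $p$) set $\hat\al:=\mathrm{diag}(1,\beta)$, which acts smoothly on all of $\R P^3$, in particular across the core $\R P^2$. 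Since the funnel ends at $\beta$ and $\mathrm{diag}(1,\beta)$ acts on geodesic spheres about $p$ by $\beta$ on directions, the two pieces agree to infinite order on $\D P'$, and one checks as before that $\hat\al$ is a $(\rho_1,\rho_2)$--equivariant diffeomorphism of $P$.

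The essential input is Proposition~\ref{equivdiffeo2sph}/\ref{diffeo2orb}, which is already established; granting it, the only point requiring real care is the smoothness and equivariance of $\hat\al$ near the ``singular'' part of the radial structure --- the origin in case~(i), the core $\R P^2$ in case~(ii) --- and this is exactly why one first makes $\al$ \emph{linear} on the boundary and only then interpolates radially, so that near that locus $\hat\al$ is literally the restriction of an orthogonal transformation.
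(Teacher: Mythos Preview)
Your proof is correct and follows essentially the same approach as the paper's: use Proposition~\ref{equivdiffeo2sph}/\ref{diffeo2orb} to equivariantly isotope $\al$ to an isometry inside a collar of the boundary sphere, then extend by that isometry over the remainder (the origin in case~(i), the core $\R P^2$ in case~(ii)). The paper compresses this into three sentences, while you spell out the two-action version of Proposition~\ref{equivdiffeo2sph} via lifting the orbifold isotopy and give explicit radial formulas, but the underlying idea is identical.
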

\begin{proof}
In both cases 
we can choose a collar neighborhood $C$ of the boundary sphere
and use Proposition~\ref{equivdiffeo2sph}
to extend $\al$ to a $(\rho_1,\rho_2)$--equivariant diffeomorphism
$C\to C$,
which is isometric on the inner boundary sphere.
It is then trivial to further extend $\al$ 
equivariantly and isometrically to the rest of the manifold. 
\end{proof}

\begin{cor}
\label{acts2s1}
Let $\rho\co H\acts S^2\times S^1$ 
be a smooth action of a finite group 
which preserves the foliation ${\mathcal F}$ 
by the 2--spheres $S^2\times\{\Point \}$. 
Then $\rho$ is standard. 
\end{cor}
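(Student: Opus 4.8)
The plan is to reduce the statement to Proposition~\ref{equivdiffeo2sph} by analyzing how the $H$-action interacts with the product structure $S^2\times S^1$. Since $\rho$ preserves the foliation $\mathcal F$ by spheres $S^2\times\{\mathrm{pt}\}$, it descends to a smooth action $\bar\rho\co H\acts S^1$ on the leaf space $S^1$. First I would replace this circle action by an isometric one: a smooth finite group action on $S^1$ is conjugate to a standard (linear) one, so after a fiber-preserving diffeomorphism of $S^2\times S^1$ we may assume $\bar\rho$ is isometric. Two cases then arise according to whether $\bar\rho$ is trivial or not; I will concentrate on the case where $\bar\rho$ is trivial (the kernel is all of $H$), since the general case is obtained by passing to the kernel $H_0=\ker\bar\rho$, building an $H_0$-invariant metric on a fundamental arc's worth of spheres, and then averaging/transporting it around the circle under the action of $H/H_0$.

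So assume every element of $H$ preserves each leaf $S^2\times\{\theta\}$. Then $\rho$ gives, for each $\theta$, a smooth action $\rho_\theta\co H\acts S^2$, depending smoothly on $\theta$, and these are all smoothly conjugate to one fixed orthogonal action $\rho_0$ (the conjugacy can be chosen to vary smoothly, e.g.\ by a parametrized version of the argument, or by noting that finite group actions on $S^2$ are rigid). Concretely, I would trivialize: there is a fiber-preserving diffeomorphism $\Phi\co S^2\times S^1\to S^2\times S^1$ of the form $\Phi(x,\theta)=(\phi_\theta(x),\theta)$ after which $\rho$ acts on each slice by the \emph{same} orthogonal action $\rho_0$, \emph{except} possibly for a monodromy: going once around $S^1$, the identification of $S^2\times\{0\}$ with itself is some $\rho_0$-equivariant diffeomorphism $\alpha\co S^2\to S^2$. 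Thus, up to fiber-preserving diffeomorphism, $(S^2\times S^1,\rho)$ is the mapping torus of $(S^2,\rho_0,\alpha)$, with $H$ acting by $\rho_0$ on the fibers.

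Now apply Proposition~\ref{equivdiffeo2sph}: the equivariant diffeomorphism $\alpha$ is $\rho_0$-equivariantly isotopic to an isometry $\alpha_{\mathrm{isom}}$. Using such an isotopy $\{\alpha_t\}$, $0\le t\le 1$, as a ``shearing'' diffeomorphism supported in a collar $S^2\times[0,1]$ of the gluing region, I would conjugate the mapping torus so that the monodromy becomes $\alpha_{\mathrm{isom}}$; explicitly, replace the point $(x,\theta)$ near the seam by $(\alpha_{\theta}^{-1}\circ\alpha_{\mathrm{isom}}(x),\theta)$ or similar, which is $\rho_0$-equivariant at every slice because each $\alpha_t$ is, hence conjugates $\rho$ to an action on $S^2\times S^1$ in which $H$ acts by $\rho_0$ on every fiber and the monodromy is isometric. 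That action is plainly isometric for the obvious $S^2\times\mathbb R$-metric on the corresponding mapping torus (which is again diffeomorphic to $S^2\times S^1$, or to an $S^2\times\mathbb R$-manifold covered by it). Therefore $\rho$ is standard.

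The main obstacle I anticipate is the \emph{smoothness in the parameter $\theta$}: making the family of conjugacies $\phi_\theta$ genuinely smooth in $\theta$ (so that $\Phi$ is a diffeomorphism, not just a fiberwise one), and likewise choosing the isotopy from $\alpha$ to $\alpha_{\mathrm{isom}}$ and the resulting shearing so that everything glues smoothly at $\theta=0$. I would handle this by working with a slightly smaller arc, using that the space of orthogonal actions smoothly conjugate to $\rho_0$ is well-behaved (the relevant conjugating diffeomorphisms can be produced by integrating a smoothly varying vector field, or by Lemma~\ref{isotoid_pt}-type arguments applied fiberwise), and by arranging all isotopies to be constant near the ends of the collars so that they extend by the identity. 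The case $\bar\rho\ne\mathrm{id}$ adds only bookkeeping: restrict to $H_0=\ker\bar\rho$, carry out the above on $S^2\times S^1$, check that the residual $H/H_0$-action permuting the slices can be made isometric as well (it acts on the leaf space $S^1$ isometrically by construction and compatibly with the fiberwise orthogonal action), and conclude.
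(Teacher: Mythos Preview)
Your approach is correct and shares the same core idea as the paper's: reduce to Proposition~\ref{equivdiffeo2sph} by making the holonomy around the $S^1$--factor isometric. The paper's execution is packaged somewhat differently and more uniformly, however. Rather than case-splitting on whether $\bar\rho$ is trivial and viewing the manifold as a mapping torus, the paper chooses once and for all a $\rho$--invariant line field $\mathcal L$ transversal to $\mathcal F$ (obtained by averaging), together with a $\rho$--invariant spherical metric on a finite $\bar\rho$--invariant family $\Sigma$ of leaves. The holonomy along $\mathcal L$ gives $H_0$--equivariant return maps $\Sigma\to\Sigma$, and Proposition~\ref{equivdiffeo2sph} is used to modify $\mathcal L$ so that these become isometric; the desired $S^2\times\mathbb R$--metric is then the one for which the leaves are unit spheres and $\mathcal L$ is orthogonal to them. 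Because $\mathcal L$ is $H$--invariant (not merely $H_0$--invariant) from the outset, the general case $\bar\rho\neq\mathrm{id}$ requires no extra work. Your sketch for that case (``do it for $H_0$, then fix up $H/H_0$'') can be made to work but needs care you have not supplied: the fiberwise trivialization $\Phi$ you build is only $H_0$--equivariant, so after conjugating by it the elements of $H\setminus H_0$ need not act isometrically between fibers, and a further adjustment is required. The line-field formulation absorbs this bookkeeping automatically.
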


We recall that,
as defined in section~\ref{defstan},
an action on $S^2\times S^1$ is standard if and only if 
there exists an invariant Riemannian metric 
locally isometric to $S^2\times\R$.

\begin{proof}
Since $\rho$ preserves ${\mathcal F}$,
it induces an action $\bar\rho\co H\acts S^1$. 
We denote the kernel of $\bar\rho$ by $H_0$. 

There exists a $\bar\rho$--invariant metric on $S^1$. 
We choose a finite $\bar\rho$--invariant subset $A\subset S^1$ as follows. 
If $\bar\rho$ acts by rotations, 
let $A$ be an orbit. 
Otherwise, if $\bar\rho$ acts as a dihedral group, 
let $A$ be the set of all fixed points of reflections in $\bar\rho(H)$. 
Let $g_0$ be a $\rho$--invariant spherical metric on 
the union $\Si\subset S^2\times S^1$ 
of the ${\mathcal F}$--leaves corresponding to the points in $A$. 

There exists a $\rho$--invariant line field ${\mathcal L}$
transversal to the foliation ${\mathcal F}$. 
Following the integral lines of ${\mathcal L}$ 
we obtain $H_0$--equivariant self-diffeomorphisms of $\Si$. 
Using Proposition~\ref{equivdiffeo2sph},
we can modify ${\mathcal L}$ 
so that these self-diffeomorphisms become $g_0$--isometric. 
Then a $\rho$--invariant metric locally isometric to $S^2(1)\times\R$
can be chosen so that the ${\mathcal F}$--leaves are 
totally-geodesic unit spheres 
and ${\mathcal L}$ is the line field orthogonal to ${\mathcal F}$.
\end{proof}

\begin{rem}
We will show later, 
see Theorem~\ref{thm:s2s1}, 
that the same conclusion holds without assuming 
that ${\mathcal F}$ is preserved by the action. 
\end{rem}

\subsection{Equivariant connected sum (decomposition)} 
\label{sec:eqcs}

We fix a finite group $G$ 
and consider smooth actions $\rho\co G\acts M$ 
on closed (not necessarily connected) 3--manifolds. 

Suppose that we are given a $\rho$--invariant 
finite family of pairwise disjoint embedded 2--spheres $S_i^2\subset M$. 
Cutting $M$ along $\cup_iS_i$ yields a compact manifold $\check M$
with boundary.
To every sphere $S_i$ 
correspond two boundary spheres $S_{i1}$ and $S_{i2}$ of $\check M$.
The action $\rho$ induces a smooth action $\check\rho\co G\acts\check M$.
Let $G_i:=\Stab_G(S_{i1})=\Stab_G(S_{i2})$. 
For every boundary sphere $S_{ij}$ 
we choose a copy $\bar B_{ij}$ of the closed unit 3--ball 
and an orthogonal action $\check\rho_{ij}\co G_i\acts B_{ij}$ 
such that there exists a $G_i$--equivariant diffeomorphism 
$\phi_{ij}\co \D B_{ij}\buildrel\cong\over\to S_{ij}$.
We attach the balls $\bar B_{ij}$ to $\check M$
using the $\phi_{ij}$ as gluing maps
and obtain a closed manifold $M'$. 
The action $\check\rho$ extends to a smooth action
$\rho'\co G\acts M'$,
and the smooth conjugacy class of $\rho'$ does not depend on the choice 
of the gluing maps $\phi_{ij}$, 
compare Corollary~\ref{exttoball} (i). 
We call $\rho'$ an 
{\em equivariant connected sum decomposition} of $\rho$. 
(Note that the spheres $S_i$ are allowed to be non-separating.) 

This construction is reversed 
by the {\em equivariant connected sum} operation. 
Suppose that ${\mathcal P}=\{P_i:i\in I\}$ 
is a finite $G$--invariant family of pairwise disjoint two point subsets 
$P_i=\{x_i,y_i\}$, $x_i\neq y_i$, of $M$. 
Then there are induced actions of $G$ on ${\mathcal P}$
and on $\cup_{i\in I}P_i$. 
Let $G_i:=\Stab_G(x_i)=\Stab_G(y_i)$. 
We suppose that for every $i\in I$ 
the actions 
$d\rho_{x_i}\co G_i\acts T_{x_i}M$ 
and 
$d\rho_{y_i}\co G_i\acts T_{y_i}M$ 
are equivalent via a linear isomorphism 
$\al_{x_i}\co T_{x_i}M\to T_{y_i}M$, respectively, 
$\al_{y_i}=\al_{x_i}^{-1}\co T_{y_i}M\to T_{x_i}M$.
More than that, 
we require that the family of the $\al$'s is $G$--equivariant,
i.e.\ if $\{z,w\}$ is one of the pairs $P_i$ and $g\in G$
then $d\rho(g)_w\circ\al_z=\al_{gz}\circ d\rho(g)_z$. 
We denote the collection of the $\al_z$, $z\in{\mathcal P}$, by $\al$. 

The {\em connected sum} of $\rho$ along $({\mathcal P},\al)$ 
is constructed as follows. 
Choose a $G$--invariant auxiliary Riemannian metric on $M$. 
Let $r>0$ be sufficiently small
so that the $2r$--balls around all points $x_i,y_i$
are pairwise disjoint. 
Via the exponential map,
the linear conjugacies $\al_{x_i},\al_{y_i}$ induce a (smooth) conjugacy 
between the actions 
$G_i\acts B_{2r}(x_i)$
and
$G_i\acts B_{2r}(y_i)$. 
We delete the open balls $B_r(x_i)$ and $B_r(y_i)$ 
and glue $G$--equivariantly along the boundary spheres. 
We obtain a new action $\rho_{\mathcal P}\co G\acts M_{\mathcal P}$.
The manifold $M_{\mathcal P}$ admits a natural smooth structure 
such that the action $\rho_{\mathcal P}$ is smooth. 
The smooth conjugacy class of $\rho_{\mathcal P}$ 
depends only on ${\mathcal P}$ and $\al$. 
(We will suppress the dependence on $\al$ in our notation.) 

If $G$ does not act transitively on ${\mathcal P}$,
one can break up the procedure into several steps: 
Suppose that ${\mathcal P}$ decomposes as the disjoint union 
${\mathcal P}={\mathcal P}_1\dot\cup{\mathcal P}_2$
of $G$--invariant subfamilies ${\mathcal P}_i$.
Then $\rho_{\mathcal P}\co G\acts M_{\mathcal P}$
is smoothly conjugate to 
$(\rho_{{\mathcal P}_1})_{{\mathcal P}_2}\co G\acts 
(M_{{\mathcal P}_1})_{{\mathcal P}_2}$,
$\rho_{\mathcal P}\cong(\rho_{{\mathcal P}_1})_{{\mathcal P}_2}$. 

It will be useful to consider the finite graph $\Ga$ 
associated to $M$ and ${\mathcal P}$ as follows:
We take a vertex for each connected component of $M$ 
and draw for every $i$ an edge between the vertices 
corresponding to the components containing $x_i$ and $y_i$. 
(Edges can be loops, of course.) 
There is a natural action $G\acts\Ga$ induced by $\rho$. 

\medskip
In the following situation, the connected sum is trivial.
\begin{lem}[Trivial summand]
\label{trivsumm}
Suppose that $M$ decomposes as the disjoint union $M=M_1\dot\cup M_2$
of $G$--invariant closed manifolds $M_i$,
i.e.\ the $M_i$ are $G$--invariant unions of connected components of $M$. 
Assume more specifically 
that $M_2$ is a union of 3--spheres, $M_2=\dot\cup_{i\in I}S^3_i$,
that $x_i\in M_1$ and $y_i\in S^3_i$,
and that the action $G\acts M_2$
(equivalently, the actions $G_i\acts S^3_i$)
are standard. 

Then $\rho_{\mathcal P}$ is smoothly conjugate to $\rho|_{M_1}$.
\end{lem}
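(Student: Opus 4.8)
Using the remark on breaking the construction into steps, I would first decompose $\mathcal{P}$ into its $G$--orbits; it then suffices, by induction on the number of orbits, to handle the case in which $G$ acts transitively on $\mathcal{P}$ (and hence on the spheres $S^3_i$). So fix $i_0\in I$ and put $H:=\Stab_G(P_{i_0})$. Since the $P_i$ are pairwise disjoint and $x_{i_0}\in M_1$, $y_{i_0}\in S^3_{i_0}$ lie in different components of $M$, a short check shows that $H=\Stab_G(x_{i_0})=\Stab_G(y_{i_0})=\Stab_G(S^3_{i_0})$ and that $M_2=G\cdot S^3_{i_0}$ with $\rho|_{M_2}$ induced from the standard action $H\acts S^3_{i_0}$.

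As the smooth conjugacy class of $\rho_{\mathcal{P}}$ is independent of the auxiliary $G$--invariant metric, I would choose one that restricts on $M_2$ to a round metric for which $H$ acts on $S^3_{i_0}$ by isometries --- possible because $M_2$ is a union of whole components and $\rho|_{M_2}$ is standard, the metric on $M_1$ being any $G$--invariant one --- and fix a small $r>0$ as in the construction. Then $B_r(y_{i_0})$ is a round ball about the $H$--fixed point $y_{i_0}$, so $C_{i_0}:=S^3_{i_0}\setminus B_r(y_{i_0})$ is the closed metric ball of radius $\pi-r$ about the antipodal fixed point, and geodesic normal coordinates there provide an $H$--equivariant diffeomorphism $f_2\co C_{i_0}\to(\bar B^3,\rho^{(2)})$ onto the Euclidean unit ball with an orthogonal $H$--action $\rho^{(2)}$. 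Likewise, geodesic normal coordinates at $x_{i_0}$ yield an $H$--equivariant diffeomorphism $f_1\co\bar B_r(x_{i_0})\to(\bar B^3,\rho^{(1)})$ with $\rho^{(1)}$ orthogonal.

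Next I would observe that $M_{\mathcal{P}}$ is obtained from $\check M_1:=M_1\setminus\bigcup_i B_r(x_i)$ by gluing in, for each $i$, the ball $C_i:=S^3_i\setminus B_r(y_i)$ along $\D C_i=\D B_r(y_i)$, via the $G$--equivariant family of diffeomorphisms $\al_i'\co\D B_r(y_i)\to\D B_r(x_i)\subset\D\check M_1$ induced by $\al$ and the exponential maps, whereas $M_1$ is recovered from $\check M_1$ by regluing the removed closed balls $\bar B_r(x_i)$ along the identity. Hence it suffices to produce a single $H$--equivariant diffeomorphism $\Psi\co C_{i_0}\to\bar B_r(x_{i_0})$ with $\Psi|_{\D C_{i_0}}=\al_{i_0}'$: the identity on $\check M_1$ together with the $G$--translates $g\Psi g^{-1}$ on the components $C_i=gC_{i_0}$ will then assemble, after the routine collar adjustment, into a $G$--equivariant diffeomorphism $M_{\mathcal{P}}\to M_1$ --- well-definedness of the translates uses the $H$--equivariance of $\Psi$, and agreement across the glued spheres uses $\Psi|_{\D C_{i_0}}=\al_{i_0}'$ together with the $G$--equivariance of $\{\al_i'\}$.

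Finally, to build $\Psi$ I would use $f_1$ and $f_2$ to transport the boundary identification $\al_{i_0}'$ to a diffeomorphism $\beta$ of the standard sphere $S^2=\D\bar B^3$; since $f_1,f_2$ are $H$--equivariant and $\rho^{(1)},\rho^{(2)}$ are orthogonal, $\beta$ is $\bigl(\rho^{(2)}|_{S^2},\rho^{(1)}|_{S^2}\bigr)$--equivariant. By Corollary~\ref{exttoball}(i) it extends to an $H$--equivariant diffeomorphism $\hat\beta\co(\bar B^3,\rho^{(2)})\to(\bar B^3,\rho^{(1)})$, which one may further isotope to be a product near the boundary sphere, and $\Psi:=f_1^{-1}\circ\hat\beta\circ f_2$ then has the required properties. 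I expect the two genuinely delicate points to be: verifying that Corollary~\ref{exttoball}(i) is applicable --- this is precisely what dictates the convenient choice of metric on $M_2$, turning both $C_{i_0}$ and $\bar B_r(x_{i_0})$ into standard orthogonal balls --- and the bookkeeping that the local identifications fit together into one smooth $G$--equivariant diffeomorphism, which rests on the $G$--equivariance of the family $\{\al_i'\}$ and on $\Psi$ having been chosen $H$--equivariant.
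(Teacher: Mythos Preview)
Your proof is correct and follows essentially the same approach as the paper: both argue that the $G_i$--action on the complement $S^3_i\setminus B_r(y_i)$ is smoothly conjugate to the $G_i$--action on the removed ball $\bar B_r(x_i)$, so that forming the connected sum ``glues back in what was taken out.'' The paper's proof is much terser (three sentences), leaving implicit the appeal to Corollary~\ref{exttoball}(i) and the reduction to a single orbit that you spell out carefully.
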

\begin{proof}
Consider a $G$--invariant family of disjoint small balls 
$B_{2r}(x_i)\subset M_1$ and $B_{2r}(y_i)\subset S^3_i$ as above. 
The $G_i$--actions on 
$\bar B_r(x_i)$ and the complement of $B_r(y_i)$ in $S^3_i$ 
are conjugate. 
Thus, in forming the connected sum, we glue back in what we took out.
\end{proof}

We will be especially interested in the situation when $M_{\mathcal P}$
is {\em irreducible}. 

\begin{prop}
\label{decoirred}
Suppose that $M_{\mathcal P}$ is irreducible and connected.
Suppose furthermore that the action $\rho$ is standard on the union 
of all components of $M$ diffeomorphic to $S^3$. 

(i) If $M_{\mathcal P}\cong S^3$, 
then $\rho_{\mathcal P}$ is standard.

(ii) If $M_{\mathcal P}\not\cong S^3$, 
then there exists a unique connected component $M_0$ of $M$ 
diffeomorphic to $M_{\mathcal P}$.
It is preserved by $\rho$ and 
$\rho_{\mathcal P}$ is smoothly conjugate to $\rho|_{M_0}$. 
\end{prop}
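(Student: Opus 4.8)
The plan is to analyze the graph $\Ga$ associated to $M$ and ${\mathcal P}$ and use the irreducibility of $M_{\mathcal P}$ to peel off all the trivial summands. First I would recall the classical fact that an equivariant connected sum $M_{\mathcal P}$ is the ordinary connected sum of the components of $M$ along the pattern encoded by $\Ga$, together with $b_1(\Ga)$ copies of $S^2\times S^1$ coming from the independent loops in $\Ga$. Since $M_{\mathcal P}$ is irreducible and connected, it contains no essential $S^2$, so by Kneser--Milnor uniqueness of prime decomposition there can be no $S^2\times S^1$ summand (forcing $\Ga$ to be a tree, hence $M$ connected would already be an issue — but $M$ need not be connected, so rather: the graph $\Ga$ must be a tree) and at most one component of $M$ can fail to be $S^3$. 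Concretely: if $\Ga$ had a cycle, then $M_{\mathcal P}$ would have an $S^2\times S^1$ connected summand, contradicting irreducibility (as $S^2\times S^1$ is not $S^3$). So $\Ga$ is a forest; and since $M_{\mathcal P}$ is connected, $\Ga$ is connected, i.e.\ a tree.

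Next, I would argue that every component of $M$ except possibly one is diffeomorphic to $S^3$. Indeed, each connected component $N$ of $M$ contributes $N$ (minus some balls) as a connected summand of $M_{\mathcal P}$; by irreducibility and uniqueness of prime decomposition, each such $N$ is either $S^3$ or is the unique non-$S^3$ prime summand of $M_{\mathcal P}$. Thus either all components are $S^3$ (case (i)), or exactly one component $M_0$ is not $S^3$ and $M_0\cong M_{\mathcal P}$ (case (ii)). Since $\rho$ permutes the components of $M$ and $M_0$ is characterized up to diffeomorphism by not being $S^3$ while being diffeomorphic to the irreducible manifold $M_{\mathcal P}$, $M_0$ is $\rho$--invariant — here one uses that all the $S^3$ components cannot be diffeomorphic to $M_0$ since $M_{\mathcal P}\not\cong S^3$.

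The remaining task is to remove the $S^3$ summands equivariantly. I would do this by repeatedly applying Lemma~\ref{trivsumm} (Trivial summand): since $\rho$ is standard on the union $M_2$ of all $S^3$ components by hypothesis, and $\Ga$ is a tree, I can choose a leaf of $\Ga$ corresponding to an $S^3$ component $S^3_i$ (a leaf exists, and if $M\neq M_2$ one can choose the leaf among the $S^3$'s since a tree with at least one edge has at least two leaves, at most one of which is $M_0$); then the edge at that leaf is a single pair $P_i=\{x_i,y_i\}$ with $y_i\in S^3_i$, but the $G$--orbit of this edge may involve several $S^3$ components forming a $G$--invariant subfamily ${\mathcal P}'\subset{\mathcal P}$ and a $G$--invariant union $M_2'$ of $S^3$'s. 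Applying the breaking-up principle $\rho_{\mathcal P}\cong(\rho_{{\mathcal P}\setminus{\mathcal P}'})_{{\mathcal P}'}$ and then Lemma~\ref{trivsumm} to the family ${\mathcal P}'$ (with $M_1$ the rest and $M_2'$ the standard $S^3$'s), one removes these summands, strictly decreasing the number of components. Iterating, in case (ii) one reduces to $M$ connected equal to $M_0$ with ${\mathcal P}=\emptyset$, giving $\rho_{\mathcal P}\cong\rho|_{M_0}$; in case (i), after removing all but one $S^3$ one is left with a single $S^3$ and possibly some self-summing pairs, but a tree has no loops so ${\mathcal P}=\emptyset$ at the end, and $\rho_{\mathcal P}\cong\rho|_{S^3}$ is standard by hypothesis.

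The main obstacle I anticipate is the bookkeeping in the inductive removal step: one must make sure that at each stage the leaf (or $G$--orbit of leaves) can be chosen among the $S^3$ components rather than hitting $M_0$, that the relevant subfamily ${\mathcal P}'$ of pairs is genuinely $G$--invariant so that Lemma~\ref{trivsumm} applies, and that the standardness hypothesis on the $S^3$ components is inherited at each stage (which it is, since we only delete components). A secondary point requiring care is the very first structural claim — that $M_{\mathcal P}$ is the connected sum of the $M$--components plus $b_1(\Ga)$ copies of $S^2\times S^1$ — which should be stated and justified cleanly, since everything else rests on combining it with Kneser--Milnor uniqueness.
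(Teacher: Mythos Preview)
Your overall strategy --- show $\Ga$ is a tree, locate the (at most one) non-$S^3$ vertex, and peel off $S^3$ leaves using Lemma~\ref{trivsumm} --- is exactly the paper's approach, and your treatment of case~(ii) is fine.

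There is, however, a genuine gap in case~(i). Your inductive step takes a leaf $S^3_i$, forms the $G$--orbit $M_2'$ of that leaf, and applies Lemma~\ref{trivsumm} with $M_1=M\setminus M_2'$. This requires that for each pair $P_j\in{\mathcal P}'$ the point $x_j$ lies in $M_1$, i.e.\ that the \emph{other} endpoint of each edge in the orbit is not itself in $M_2'$. When $\Ga$ has at least three vertices that is automatic (two leaves of a connected tree with $\geq3$ vertices are never adjacent), but when $\Ga$ is a single edge --- two $S^3$'s joined by one pair $P$ --- and $\rho$ swaps the two spheres, the $G$--orbit of either leaf is all of $M$, so $M_1=\emptyset$ and Lemma~\ref{trivsumm} does not apply. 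Your reduction therefore cannot reach ``a single $S^3$ with ${\mathcal P}=\emptyset$'' in this situation, and the final sentence of your case~(i) is unjustified.

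The paper singles out precisely this base case and handles it by a direct construction: equip $M=S^3_1\dot\cup S^3_2$ with a $\rho$--invariant round metric; then the equivariant family $\al$ is the differential of an involutive $G$--equivariant isometry $\Phi\co M\to M$ exchanging $z_1$ and $z_2$, and $\rho_{\mathcal P}$ is realized by gluing the two hemispheres $B_{\pi/2}(\hat z_i)$ along $\Phi$, which visibly carries an invariant round metric. You should insert this (or an equivalent argument) as the terminal step of your induction in case~(i).
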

\begin{proof}
Under our assumption, 
the graph $\Ga$ is connected. 
Since $M_{\mathcal P}$ is irreducible, 
$\Ga$ cannot contain cycles or loops 
and thus is a tree. 

(ii)
Since the prime decomposition of $M_{\mathcal P}$ is trivial,
a unique such component $M_0$ of $M$ exists
and all other components are diffeomorphic to $S^3$. 
The vertex $v_0$ of $\Ga$ corresponding to $M_0$ is fixed by $G$.

If $\Ga$ is just a point, the assertion is trivial. 
Suppose that $\Ga$ is not a point. 
We choose $M_2\subset M$ as the union of the $S^3$--components 
which correspond to the endpoints of $\Ga$ different from $v_0$.
Let $M_1=M-M_2$. 
We have $M_0\subseteq M_1$. 
Each component of $M_2$ intersects a unique pair $P_i$ 
in exactly one point.
We denote the subfamily of these $P_i$ by ${\mathcal P}_{\mathrm{out}}$
and put ${\mathcal P}_{\mathrm{inn}}={\mathcal P}-{\mathcal P}_{\mathrm{out}}$.
According to Lemma~\ref{trivsumm}, 
$\rho_{{\mathcal P}_{\mathrm{out}}}\cong\rho|_{M_1}$. 
Furthermore,
$\rho_{\mathcal P}\cong (\rho_{{\mathcal P}_{\mathrm{out}}})_{{\mathcal P}_{\mathrm{inn}}}$
and thus 
$\rho_{\mathcal P}\cong (\rho|_{M_1})_{{\mathcal P}_{\mathrm{inn}}}$.
We may replace $\rho$ by $\rho|_{M_1}$ 
and ${\mathcal P}$ by ${{\mathcal P}_{\mathrm{inn}}}$. 
After finitely many such reduction steps, 
we reach the case when $\Ga$ is a point. 

(i) 
If not all vertices of $\Ga$ are endpoints,
i.e.\ if $\Ga$ is not a point or a segment, 
then we can perform a reduction step as in case (ii). 
We can therefore assume that $\Ga$ is a point or a segment. 

If $\Ga$ is a point, there is nothing to show. 
Suppose that $\Ga$ is a segment, i.e.\ ${\mathcal P}=\{P\}$.  
Then $M$ is the disjoint union of two spheres, 
$M=S^3_1\dot\cup S^3_2$, 
and $P=\{z_1,z_2\}$ with $z_i\in S^3_i$. 
Note that the points $z_1,z_2$ may be switched by $\rho$.
Let $M$ be equipped with a spherical metric such that $\rho$ is isometric. 
Then $\al_{z_1}$ and $\al_{z_2}=\al_{z_1}^{-1}$
are the differentials of an involutive $G$--equivariant isometry $\Phi\co M\to M$
switching $z_1$ and $z_2$.
The action $\rho_{\mathcal P}$ can be obtained by restricting $\rho$ 
to the union of the hemispheres $B_{\frac{\pi}{2}}(\hat z_i)$ 
centered at the antipodes $\hat z_i\in S^3_i$ of $z_i$, 
and gluing the boundary spheres along $\Phi$, 
compare the proof of Lemma~\ref{trivsumm}. 
Thus $\rho_{\mathcal P}$ is standard also in this case. 
\end{proof}

\subsection{Balls invariant under isometric actions on the 3--sphere}
\label{sec:invballs}

In this section we will prove the following auxiliary result
which says that an action on a 3--ball is standard 
if it extends to a standard action on the 3--sphere. 

\begin{prop}
\label{subact}
Suppose that $\rho\co  G\acts S^3$ is an isometric action 
(with respect to the standard spherical metric)
and that $\bar B^3\subset S^3$ is a $\rho$--invariant smooth closed ball. 
Then the restricted action $\rho|_{\bar B}$
is standard. 
\end{prop}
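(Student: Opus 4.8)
The plan is to show that a $\rho$--invariant smooth closed ball $\bar B^3\subset S^3$ must be, up to an equivariant isotopy, one of the standard $\rho$--invariant balls whose $G$--action is manifestly standard, and then to invoke Corollary~\ref{exttoball} to transport the action on $\bar B$ to the standard model. Concretely, let $H=\Stab_G(\bar B^3)$ be the stabilizer of the ball (since $\bar B$ is invariant, $H=G$ if $G$ is connected as a group acting, but in general one should note that the complementary ball $\bar B^c=\ol{S^3-B^3}$ is also invariant and $\Stab_G(\bar B^c)=\Stab_G(\bar B)$, so in fact $G$ itself preserves $\bar B$). The boundary $S^2=\D\bar B$ is then a $G$--invariant embedded 2--sphere in $S^3$ on which $G$ acts smoothly. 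I would first produce a $G$--invariant round 2--sphere $\Sigma_0\subset S^3$ that is ``standardly placed'' with respect to $\rho$: namely, the quotient $S^3/\rho$ is a spherical 3--orbifold, and one needs an invariant round 2--sphere whose two complementary balls are the standard spherical subballs. Depending on the type of the isometric $G$--action, such an invariant round sphere always exists (e.g.\ the boundary of a regular neighborhood of a fixed point set, or a sphere equidistant between two orbits); alternatively one can take $\Sigma_0$ to be a great 2--sphere if $G$ preserves one, and otherwise a slightly pushed-off round sphere.

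The key step is then to show that the given invariant smooth sphere $S^2=\D\bar B$ is $\rho$--equivariantly isotopic to $\Sigma_0$. For this I would pass to the quotient orbifold $\mathcal{O}=S^3/\rho$ and the image 2--suborbifold $\bar S=S^2/\rho$, which is a closed (possibly non-orientable, possibly with mirror locus) 2--suborbifold of the spherical 3--orbifold $\mathcal{O}$ separating it — or at least the image of $\Sigma_0$ does — into two halves, each a ball orbifold. Since $\mathcal{O}$ is a spherical 3--orbifold (hence has a very restricted topology — it is the quotient of $S^3$ by a finite subgroup of $O(4)$, so in particular it is irreducible as an orbifold and contains no incompressible 2--suborbifolds other than spherical ones), any embedded spherical 2--suborbifold bounding a ball orbifold on at least one side is isotopic to the standard round one. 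The cleanest route is: (1) by an innermost-disk / irreducibility argument in the orbifold $\mathcal{O}$, isotope $\bar S$ so that it is disjoint from (or equal to) $\bar\Sigma_0=\Sigma_0/\rho$; (2) then the region between them is a product orbifold $\bar S\times[0,1]$, which gives the equivariant isotopy upstairs. Lifting the orbifold isotopy to a $\rho$--equivariant isotopy of $S^3$ is standard (path-lifting / covering homotopy for orbifold maps).

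Once $\D\bar B=\Sigma_0$ after an equivariant isotopy, $\rho|_{\bar B}$ is a smooth $G$--action on a ball whose boundary action is conjugate to the isometric boundary action $\rho|_{\Sigma_0}$; applying Corollary~\ref{exttoball}(i) (with the isometric model action on the standard Euclidean/spherical ball) gives a $G$--equivariant diffeomorphism from $\bar B$ to a standard ball carrying $\rho|_{\bar B}$ to an orthogonal action — possibly after first noting, via Proposition~\ref{equivdiffeo2sph}, that the boundary diffeomorphism can be taken isometric. This exhibits $\rho|_{\bar B}$ as standard. The main obstacle I anticipate is step (1)–(2): carefully running the innermost-circle/disk argument in the orbifold setting to straighten the invariant smooth sphere to the round one, and in particular handling the case where $\D\bar B$ might be non-separating in $S^3$ (which cannot happen in $S^3$ since $H_2(S^3)=0$, so this is automatically excluded) and the bookkeeping of mirror and cone loci when $\bar S$ meets the singular locus of $\mathcal{O}$. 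The upshot is that all the genuine work is in the equivariant ``Alexander trick'' for the 2--sphere inside a spherical 3--orbifold, after which Corollary~\ref{exttoball} finishes the argument mechanically.
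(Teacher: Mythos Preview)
Your overall strategy---show the invariant sphere $\Si=\D\bar B$ is $\rho$--equivariantly isotopic to a round invariant sphere $\Si_0$, then read off that $\rho|_{\bar B}$ is standard---is exactly the paper's, but you have skipped the step that turns out to be the crux. You assert that a $\rho$--invariant round $2$--sphere $\Si_0$ ``always exists,'' offering heuristics like ``boundary of a regular neighborhood of a fixed point set'' or ``a slightly pushed-off round sphere.'' But the existence of such a $\Si_0$ is equivalent to $\rho(G)\subset O(4)$ having a common real $+1$--eigenvector, i.e.\ a fixed point on $S^3$, and this is not automatic: plenty of finite subgroups of $O(4)$ have no fixed point. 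What must be shown is that the presence of the invariant smooth ball forces fixed points. The paper does this by analyzing each $\rho(g)$ against $\Si$ (Lemma~\ref{observation}) to see that its conjugacy class in $O(4)$ agrees with that of the suspension of $\rho(g)|_{\Si}$, then invoking character theory to conclude that the representation $\rho$ is isomorphic to the suspension $\tilde\rho$ of $\rho|_{\Si}$ (Lemma~\ref{suspense}); in particular $\Fix(\rho(G))\neq\emptyset$. A further nontrivial argument (Corollary~\ref{fixptinball}) is needed to find a pair of antipodal fixed points \emph{separated} by $\Si$, so that one can reduce to a product $S^2\times I$ with $\Si$ sitting in between---without this separation your product/isotopy picture does not get off the ground. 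None of your suggested constructions supplies these facts.

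Your step~2, the orbifold innermost-disk argument, is in spirit what the paper carries out as Lemma~\ref{isotopic} (with Sublemma~\ref{horsurf}), but the paper does it by a case split on the structure of $\rho(G)$ rather than by a general orbifold irreducibility statement; either way, this is real work and your sketch does not yet constitute a proof. Finally, your step~3 is superfluous: once $\Si$ has been equivariantly isotoped to a round $\Si_0$, the ball $\bar B$ has become a round metric ball with a $\rho$--fixed center, and $\rho|_{\bar B}$ is visibly orthogonal via the exponential map---there is no need to invoke Corollary~\ref{exttoball}.
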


We denote $\Si:=\D B$, $B_1:=B$, $B_2:=S^3-\bar B$.

If $\rho(g)$, $g\in G$, has no fixed point on $\Si$ then,
due to Brouwer's Fixed Point Theorem, 
it has at least one fixed point $p_1$ in $B_1$ 
and one fixed point $p_2$ in $B_2$.
No geodesic segment $\si$ connecting them
can be fixed pointwise, because $\si$ would intersect $\Si$
and there would be a fixed point in $\Si$, a contradiction.
Since $\rho(g)$ is a spherical isometry, 
it follows that $p_1$ and $p_2$ are antipodal isolated fixed points.
Thus $\rho(g)$ is the spherical suspension 
of the antipodal involution on $S^2$. 
In particular, 
$\rho(g)$ has order two and reverses orientation. 

Assume now that $\rho(g)$ does have fixed points on $\Si$. 
Near $\Si$, $\Fix(\rho(g))$ is an interval bundle over $\Fix(\rho(g))\cap\Si$.
To see this, note that 
the normal geodesic through any fixed point on $\Si$ belongs to $\Fix(\rho(g))$. 

If $q$ is an isolated fixed point of $\rho(g)$ on $\Si$ 
then $\rho(g)|_{\Si}$ is conjugate to a finite order rotation
and has precisely two isolated fixed points $q$ and $q'$. 
Moreover, 
$\rho(g)$ is a finite order rotation on $S^3$ with the same rotation angle,
and $\Fix(\rho(g))$ is a great circle. 
We have $\Fix(\rho(g))\cap\Si=\Fix(\rho(g)|_{\Si})=\{q,q'\}$. 

If $\rho(g)$ has no isolated fixed point on $\Si$ 
then, due to the classification of finite order isometries on $S^2$,
the only remaining possibility is that 
$\rho(g)|_{\Si}$ is (conjugate to) a reflection at a circle, 
$\rho(g)$ is a reflection at a 2--sphere
and we have $\Fix(\rho(g))\cap\Si=\Fix(\rho(g)|_{\Si})\cong S^1$.

This gives:

\begin{lem}
\label{observation}
Let $h$ be a $\rho$--invariant auxiliary spherical metric on $\Si$.
Then for any $g\in G$, $\rho(g)$ is isometrically conjugate to the
spherical suspension of $\rho(g)|_{(\Si,h)}$. 
\end{lem}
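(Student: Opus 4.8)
The plan is to run through the possible isometry types of $\rho(g)$, $g\in G$, that were extracted in the discussion preceding the statement, and in each case to exhibit $\rho(g)$---as an isometry of the round $S^3$---as isometrically conjugate to the spherical suspension of the corresponding isometry of the round $S^2$. Since $(\Sigma,h)$ is a round $2$--sphere, its spherical suspension is a round $3$--sphere, so this is precisely the asserted conclusion. I will use freely that the spherical suspension of a map $\phi\colon S^2\to S^2$ fixes the two suspension poles, restricts to $\phi$ on the equatorial great $2$--sphere, and on the level of $O(4)$ is given by $\Phi\oplus 1$ for a $\Phi\in O(3)$ inducing $\phi$; in particular it has the same order as $\phi$, with fixed--point set the spherical suspension of $\Fix(\phi)$.

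If $\rho(g)=\id$ there is nothing to prove. If $\rho(g)|_\Sigma$ has an isolated fixed point, then by the analysis above $\rho(g)$ is a finite--order rotation of $S^3$ about a great circle $C$ by some angle $\theta$, while $\rho(g)|_{(\Sigma,h)}$ is a rotation of the round $S^2$ about $C\cap\Sigma$ by the same $\theta$; since a rotation of $S^3$ about a great circle by $\theta$ is the spherical suspension (with poles chosen on $C$) of a rotation of $S^2$ by $\theta$, the two agree up to conjugacy. If $\rho(g)\neq\id$ and $\rho(g)|_\Sigma$ has no isolated fixed point, then by the analysis $\rho(g)$ is a reflection of $S^3$ at a great $2$--sphere $P$ and $\rho(g)|_{(\Sigma,h)}$ is a reflection of the round $S^2$ at the great circle $P\cap\Sigma$; since a reflection of $S^3$ at a great $2$--sphere is the spherical suspension (with poles chosen in $P$) of a reflection of $S^2$ at a great circle, again the two agree. (That $\rho(g)$ could be the reflection at $\Sigma$ itself is excluded, as it would interchange $B_1$ and $B_2$, contrary to the $\rho$--invariance of $\bar B$.)

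The only case calling for more than the classification is $\rho(g)|_\Sigma$ fixed--point free. Here $\rho(g)$ has two antipodal isolated fixed points $p_1\in B_1$, $p_2\in B_2$, and if we present $S^3$ as the spherical suspension of the great $2$--sphere $E$ equidistant from $p_1$ and $p_2$, with poles $p_1,p_2$, then $\rho(g)=\mathrm{Susp}(\rho(g)|_E)$ identically. So it remains to check that $\rho(g)|_\Sigma$ and $\rho(g)|_E$ are conjugate as isometries of the round $2$--sphere. Now $\Sigma$ and $E$ are both $\langle\rho(g)\rangle$--invariant embedded $2$--spheres separating the two ends of $S^3\setminus\{p_1,p_2\}$, and the latter is $\langle\rho(g)\rangle$--equivariantly diffeomorphic to $S^2\times\R$ with $\langle\rho(g)\rangle$ acting as $(\rho(g)|_E)\times\id$, the $\R$--factor being acted on trivially since $\rho(g)$ has finite order. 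The plan is to prove that such a sphere is $\langle\rho(g)\rangle$--equivariantly ambient isotopic to $S^2\times\{0\}=E$---equivalently, that an essential separating $2$--suborbifold of $\bigl(S^2/\langle\rho(g)\rangle\bigr)\times\R$ is orbifold isotopic to the fibre---whence the restrictions to $\Sigma$ and $E$ are conjugate and we are done.

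I expect this last point, the equivariant uniqueness of the separating sphere, to be the main obstacle. It ought to follow from standard $3$--manifold/orbifold arguments, since $\Sigma$ and $E$ are incompressible and the ambient space is a product over a spherical $2$--orbifold; but care is needed because $\langle\rho(g)\rangle$ need not act freely on $\Sigma$---some nontrivial power of $\rho(g)$ has fixed points there---so a naive passage to a covering space does not apply, and one should argue in the quotient orbifold, or directly produce an equivariant normal form for $\Sigma$.
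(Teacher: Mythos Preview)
Your handling of the identity, rotation, and reflection cases matches the paper's. The divergence is entirely in the fixed-point-free case, where you launch an equivariant isotopy argument and leave it unfinished.

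The paper's route there is much shorter. In the discussion immediately preceding the lemma the paper has already concluded that when $\rho(g)$ has no fixed point on $\Sigma$, then $\rho(g)$ \emph{is} the spherical suspension of the antipodal involution on $S^2$, and in particular has order two. Granting this, $\rho(g)|_{(\Sigma,h)}$ is a fixed-point-free isometry of a round $2$--sphere of order two, hence the antipodal map, and the lemma follows at once. You appear to have overlooked this order-two conclusion: with it in hand your worry that ``some nontrivial power of $\rho(g)$ has fixed points there'' disappears, since the only nontrivial power is $\rho(g)$ itself, and $\langle\rho(g)\rangle$ acts freely on $\Sigma$.

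That said, your instinct that this case deserves care is not unfounded. The paper's inference ``$p_1,p_2$ are antipodal isolated fixed points, thus $\rho(g)$ is the suspension of the antipodal involution'' is not transparent: an isometry $1\oplus A\in O(4)$ with $A\in O(3)$ fixed-point-free on $S^2$ always has exactly the two antipodal isolated fixed points $\pm e_1$, yet $A$ need not be $-I$ (take $A$ a rotary reflection of order $6$; then the equatorial sphere $\{x_1=0\}$ is a legitimate $\Sigma$ on which $\rho(g)$ acts without fixed points and with order $6$). So if one does not accept the paper's order-two assertion at face value, something like your isotopy argument --- or a direct comparison of the rotation angle of $A$ with that of $\rho(g)|_\Sigma$, e.g.\ by examining fixed-point sets of odd powers $\rho(g)^k$ --- is genuinely needed. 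But as a comparison with the paper's own proof: the paper treats the lemma as an immediate corollary of the preceding paragraph, and what you are missing relative to that is simply the use of the order-two claim.
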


Note that the conjugating isometry might 
a priori depend on $g$. 

We will see next that 
the action $\rho$ 
is determined by its restriction $\rho|_{\Si}$ to $\Si$.
Let us denote by $\tilde\rho$ the spherical suspension of
$\rho|_{(\Si,h)}$.
Hence $\tilde{\rho}\co G\acts S^{3}$ is an isometric action on the unit
sphere and 
we may regard both actions as representations 
$\rho,\tilde\rho\co G\to O(4)$. 

\begin{lem}
\label{suspense}
The representations $\rho$ and $\tilde\rho$ are isomorphic. 
\end{lem}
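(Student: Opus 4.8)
The plan is to reduce the statement to the standard fact that a representation of a finite group is determined up to isomorphism by its character; the only input from the preceding discussion is Lemma~\ref{observation}.

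First I would read off equality of characters. By construction $\tilde\rho(g)$ is the spherical suspension of the map $\rho(g)|_{(\Si,h)}$ for every $g\in G$ (suspending a group action means suspending each group element's action). Lemma~\ref{observation} asserts precisely that $\rho(g)$ is isometrically conjugate to this suspension, so $\rho(g)$ and $\tilde\rho(g)$ lie in the same conjugacy class of $O(4)$ --- the conjugating isometry may depend on $g$, but that is irrelevant here. In particular $\operatorname{tr}\rho(g)=\operatorname{tr}\tilde\rho(g)$ for all $g$, i.e.\ $\chi_\rho=\chi_{\tilde\rho}$.

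Next I would upgrade this to $\rho\cong\tilde\rho$. Decomposing both orthogonal representations into real irreducibles and using that distinct real irreducible representations of a finite group have linearly independent characters (their Gram matrix for the character inner product is diagonal with nonzero entries $\dim_\R\operatorname{End}_G(\cdot)\in\{1,2,4\}$), the multiplicity of each irreducible in $\rho$ and in $\tilde\rho$ is a function of the common character and hence the same. Thus $\rho\cong\tilde\rho$, and since both representations are orthogonal the intertwiner can be corrected by a polar decomposition to lie in $O(4)$ --- which is the form needed afterwards in the proof of Proposition~\ref{subact}. I do not expect a real obstacle; the only subtlety is that the isomorphism is wanted over $\R$ rather than just over $\C$, which is exactly why one argues with real irreducibles (equivalently: two real representations with isomorphic complexifications are isomorphic, since the real intertwiners form an $\R$-form of the complex intertwiner space and so cannot all be singular).
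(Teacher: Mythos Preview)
Your proof is correct and shares its starting point with the paper's: both deduce $\chi_\rho=\chi_{\tilde\rho}$ from Lemma~\ref{observation}. The difference lies in how one passes from equal characters to an isomorphism of \emph{real} representations. The paper complexifies, invokes the standard fact that complex representations are determined by their characters, and then descends by hand: given a complex intertwiner $A\co\C^4\to\C^4$, it considers $a:=\mathrm{Re}(A|_{\R^4})$, which is a real $(\rho,\tilde\rho)$--equivariant map; if $a$ is invertible or zero one is done, and otherwise one splits off a common summand and iterates. You instead argue directly over $\R$ via linear independence of characters of real irreducibles (with Gram matrix entries $\dim_\R\mathrm{End}_G\in\{1,2,4\}$), which is cleaner and avoids the ad hoc descent. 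Your parenthetical alternative---that real intertwiners form an $\R$--form of the complex intertwiner space, hence contain an invertible element---is essentially a compressed version of the paper's argument. Both routes are standard; yours is the more packaged one.
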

\begin{proof}
According to Lemma~\ref{observation},
the characters of the two representations are equal.
Therefore their complexifications $\rho_{\C},\tilde\rho_{\C}\co G\to O(4,\C)$ 
are isomorphic,
see e.g.\ Serre \cite[Corollary 2 of chapter 2.3]{Serre},
i.e.\ there exists a $(\rho,\tilde\rho)$--equivariant 
complex linear isomorphism $A\co \C^4\to\C^4$,
$A\circ\rho_{\C}=\tilde\rho_{\C}\circ A$. 

To deduce that already the real representations are isomorphic, 
we consider the composition 
$a:=Re(A|_{\R^4})\co \R^4\to\R^4$,
of $A$ with the $\tilde\rho$--equivariant 
canonical projection $\C^4\to\R^4$.
It is a $(\rho,\tilde\rho)$--equivariant $\R$--linear homomorphism, 
$a\circ\rho=\tilde\rho\circ a$. 
We are done if $a$ is an isomorphism.
We are also done if $a=0$,
because then $i\cdot A\co \R^4\to\R^4$ is an isomorphism.
Otherwise we have a non-trivial decomposition 
$\R^4\cong \ker(a)\oplus \mathrm{im}(a)$ 
of the $\rho(G)$--module $\R^4$ 
as the direct sum of $\ker(a)$ and the submodule $\mathrm{im}(a)$ of 
the $\tilde\rho(G)$--module $\R^4$.
Hence the representations $\rho$ and $\tilde\rho$ 
contain non-trivial isomorphic submodules. 
We split them off and apply the same reasoning
to the complementary submodules. 
After finitely many steps, the assertion follows. 
\end{proof}

The isomorphism between the representations can be chosen orthogonal. 

As a consequence of Lemma~\ref{suspense}, 
the action $\rho$ has (at least a pair of antipodal) fixed points.
Furthermore, for any $\rho$--fixed point $p$
the induced action $G\acts UT_pS^3$ is smoothly conjugate to $\rho|_{\Si}$.
We show next that there exists a pair of antipodal fixed points 
separated by $\Si$. 

\begin{corollary}
\label{fixptinball}
There exists a pair of antipodal $\rho$--fixed points 
$p_1\in B_1$ and $p_2\in B_2$.
\end{corollary}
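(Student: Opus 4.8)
The plan is to read the structure of the fixed locus off Lemma~\ref{suspense} and then to locate it relative to $\Si$ by Smith theory. By Lemma~\ref{suspense} and the remarks after it, $\rho$ is conjugate to the suspension of $\rho|_{(\Si,h)}$, so $F:=\Fix(\rho)$ is a nonempty totally geodesic great subsphere $S^k\subseteq S^3$, and $W:=F\cap\Si=\Fix(\rho|_\Si)$ is, with respect to the round metric $h$, a great $(k-1)$-subsphere of $\Si$ (the suspension of $\Fix(\rho|_\Si)$ is $\Fix(\tilde\rho)$, so $\dim\Fix(\rho|_\Si)=k-1$). If $k=3$ then $\rho$ is trivial, $F=S^3$, and some antipodal pair is separated by $\Si$ already because the free involution $x\mapsto-x$ cannot carry the ball $\bar B_1$ strictly inside itself; so assume $k\le2$. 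Then $F\setminus W$ consists of two open hemispheres $F_+,F_-$ (the two points $p,-p$ when $k=0$) interchanged by the antipodal map, each connected and disjoint from $\Si$, hence lying entirely in $B_1$ or in $B_2$.

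If $F_+$ and $F_-$ lie on opposite sides of $\Si$ we are done: pick $p$ in the half inside $B_1$, so $-p$ lies in the half inside $B_2$, and $\{p,-p\}\subseteq F=\Fix(\rho)$ is the required pair. The remaining (``bad'') case is that $F$ lies on one side, say $F\subseteq\bar B_1$, and then $\bar B_2$ is a $\rho$-invariant smooth closed $3$-ball with $\Fix(\rho|_{\bar B_2})=F\cap\bar B_2=W\subseteq\D\bar B_2$. To rule this out I would analyze $\sigma:=\rho|_\Si$, which has fixed locus the great $(k-1)$-sphere $W$: for $k=2$, $\sigma(G)=\Z_2$ is a reflection; for $k=1$, $\sigma(G)$ is a finite cyclic rotation group about the axis $W$; for $k=0$, $\sigma(G)$ is a fixed-point-free finite subgroup of $O(3)$, and if some element of it has no fixed point on $\Si$ then that element's fixed set on $S^3$ is exactly the antipodal pair $F=\{p,-p\}$ — which the earlier analysis places on opposite sides of $\Si$, contradicting $F\subseteq\bar B_1$ — while otherwise $\sigma(G)$ contains no rotoreflections and, being non-cyclic, is one of $D_m$, $A_4$, $S_4$, $A_5$. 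In all surviving cases one can choose $g\in G$ of prime order $p$ so that $\sigma(g)\ne\id$ is a rotation of $\Si$ (with two fixed points) when $k\le1$, and the reflection in $W$ when $k=2$. Then $\rho(g)\ne\id$ is a rotation of $S^3$ about a great circle, respectively the reflection in a great $2$-sphere, with $\Fix(\rho(g))\supseteq F$ and $\Fix(\rho(g))\cap\Si=\Fix(\sigma(g))$; since the components of $\Fix(\rho(g))\setminus\Si$ avoid $\Si$ and meet $F\subseteq B_1$ (immediate for $k\ge1$, a one-line arc argument for $k=0$), one gets $\Fix(\rho(g))\subseteq\bar B_1$, hence $\Fix(\rho(g))\cap\bar B_2=\Fix(\sigma(g))$ — two points when $k\le1$, a circle when $k=2$. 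But $\bar B_2$ is contractible, so by Smith theory the fixed set of the $\Z_p$-action $\langle\rho(g)\rangle$ on $\bar B_2$ must be mod-$p$ acyclic, in particular connected with vanishing higher homology; two points and a circle are both excluded. This contradiction finishes the argument.

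The main obstacle is the case $k=0$ of the bad case, where $\Fix(\rho)$ is just an antipodal pair and one must exclude that both its points lie on the same side of $\Si$. A single global Smith-theoretic argument is not available here because every individual group element does have fixed points inside $\bar B_2$; instead one has to extract, from the classification of fixed-point-free finite isometric actions on $S^2$ together with the earlier observation about elements having no fixed point on $\Si$, one prime-order element whose fixed great circle meets $\bar B_2$ in only two points. Checking that such an element always exists — in particular ruling out that all order-two elements of $\sigma(G)$ are reflections — is the delicate point of the proof.
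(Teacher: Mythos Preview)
Your argument has a genuine gap in the $k=0$ case, and a couple of smaller inaccuracies elsewhere.

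First, a minor point: the claim that the antipodal map \emph{interchanges} $F_+$ and $F_-$ is false in general. Since $\Si$ is not a great $2$--sphere, $W=F\cap\Si$ need not be antipodally symmetric inside $F$, so $-F_+$ need not equal $F_-$. This is easily repaired by the argument the paper actually uses (for $k=2$: $-D_1\not\subseteq D_1$ because the antipodal map has no fixed point, hence the open set $-D_1$ meets $D_2$; the $k=1$ case is similar using the shorter arc).

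More importantly, the transversality established just before the Corollary (``near $\Si$, $\Fix(\rho(g))$ is an interval bundle over $\Fix(\rho(g))\cap\Si$'') already forces the two components of $F\setminus W$ to lie on \emph{opposite} sides of $\Si$ when $k\in\{1,2\}$. So for $k\ge1$ the bad case never occurs, and your Smith-theory argument there is vacuous.

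The real content is $k=0$, and here your reasoning becomes circular. You pick $g$ with $\rho(g)$ a rotation; its fixed set is a great circle $C$ through $p,-p$, meeting $\Si$ transversally in two points $\{q,q'\}$. By the same transversality, $C\setminus\{q,q'\}$ consists of exactly one arc in $B_1$ and one arc in $B_2$. In the bad case $p,-p\in B_1$, so both lie in the $B_1$--arc; the $B_2$--arc contains neither. Your ``one-line arc argument'' that each component of $C\setminus\Si$ meets $F=\{p,-p\}$ is therefore exactly the statement that $\{q,q'\}$ separates $p$ from $-p$ on $C$, which by transversality is equivalent to $p$ and $-p$ lying on opposite sides of $\Si$ --- the very thing you are trying to prove. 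Consequently $\Fix(\rho(g))\cap\bar B_2$ is a single closed arc, which is mod-$p$ acyclic, and Smith theory yields no contradiction.

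The paper's approach to $k=0$ is genuinely different: it produces an element $\rho(g')$ that preserves some fixed great circle $C=\Fix(\rho(g))$ and acts on it as the reflection at $\{p,\hat p\}$ (``situation (S)''). This symmetry forces the $\rho(g')$--invariant pair $C\cap\Si$ to separate $p$ from $\hat p$ on $C$, and transversality then puts them on opposite sides of $\Si$. Establishing that situation (S) always occurs requires a short but nontrivial analysis of the induced action on the unit tangent sphere at $p$; this is the missing idea in your proposal.
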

\begin{proof}
According to Lemma~\ref{suspense}, 
$\rho$ is a suspension and hence 
$\Fix(\rho(G))$ is a great sphere, a great circle 
or a pair of antipodal points. 

If $\Fix(\rho(G))\cong S^2$,  
then $\rho(G)$ has order two 
and is generated by the reflection at $\Fix(\rho(G))$.
Our earlier discussion implies 
that $\Si$ intersects $\Fix(\rho(G))$ transversally
in one circle $\ga$
which divides $\Fix(\rho(G))$ into the disks $D_i:=B_i\cap \Fix(\rho(G))$. 
Let $\iota_{\Fix(\rho(G))}$ be the antipodal involution on $\Fix(\rho(G))$. 
Since it has no fixed point, 
we have that $\iota_{\Fix(\rho(G))}D_1\not\subseteq D_1$.
This implies that the open set $\iota_{\Fix(\rho(G))}D_1$ intersects $D_2$ 
and there exist antipodal points $p_1\in D_1$ and $p_2\in D_2$ as desired. 

If $\Fix(\rho(G))\cong S^1$ 
then there is a rotation $\rho(g)\in \rho(G)$ with $\Fix(\rho(g))=\Fix(\rho(G))$.
It follows that $\Si$ intersects $\Fix(\rho(G))$ transversally in two points. 
As before,
there exist points $p_1,p_2\in \Fix(\rho(G))$ as desired. 

We can now assume that $\Fix(\rho(G))=\{p,\hat p\}$ is a pair of antipodal points 
and we must show that $\Si$ separates them. 
(Note that $\rho(G)$ cannot fix a point on $\Si$ 
because otherwise $\dim(\Fix(\rho(G)))\geq1$. 
Thus $p,\hat p\not\in\Si$.)

If $\rho(G)$ has order two and is generated by the involution 
with isolated fixed points $p$ and $\hat p$,
then Brouwer's Fixed Point Theorem implies 
that each ball $B_i$ contains one of the fixed points,
and we are done in this case. 

Otherwise $\rho(G)$ contains nontrivial orientation preserving isometries. 
Any such element $\rho(g)$ 
is a rotation whose axis $\Fix(\rho(g))$ is a great circle through $p$ and $\hat p$. 
If there exists $\rho(g')\in \rho(G)$ preserving $\Fix(\rho(g))$
and such that $\rho(g')|_{\Fix(\rho(g))}$ is a reflection at $\{p,\hat p\}$
then we are done
because the ($\rho(g')$--invariant) 
pair of points $\Si\cap \Fix(\rho(g))$ separates $p$ and $\hat p$.
Let us call this situation (S).

We finish the proof by showing that (S) always occurs.
Consider the induced action $d\rho_{p}\co  G\acts UT_pS^3$ 
on the unit tangent sphere in $p$
and in particular on the nonempty finite subset $F$ 
of fixed points of nontrivial rotations in $d\rho_{p}(G)$.
We are in situation (S)  
if and only if some $d\rho_{p}(G)$--orbit in $F$ contains a pair of antipodes. 
Suppose that we are not in situation (S).
Then $F$ must decompose into an even number of $\rho(G)$--orbits,
in fact,
into an even number of $H$--orbits for any subgroup $H\leq d\rho_{p}(G)$. 
(The action $d\rho_{p}$ commutes with the antipodal involution of $UT_{p}S^{3}$.)
Let $G_+\leq d\rho_{p}(G)$ be the subgroup of orientation preserving isometries. 
It follows that the spherical quotient 2--orbifold $UT_pS^3/G_+$
has an even non-zero number of cone points 
and hence is a sphere with two cone points,
i.e.\ the spherical suspension 
of a circle of length $\frac{2\pi}{m}$, $m\geq2$.
So, $F$ is a pair of antipodes.
$d\rho_{p}(G)$ cannot interchange them 
because we are not in situation (S).
On the other hand,
$d\rho_{p}(G)$ cannot fix any point on $UT_pS^3$ since the fixed point
set of $\rho(G)$ on $S^3$ is 0-dimensional.
We obtain a contradiction and conclude that we are always in situation (S).
\end{proof}

In view of Lemma~\ref{suspense} and Corollary~\ref{fixptinball}
we reformulate Proposition~\ref{subact} as follows.
By removing small invariant balls around $p_1,p_2$ 
and replacing the metric,
we convert $\rho$ into an isometric action 
$\rho_1\co G\acts S^2\times I$ 
on the product of the unit 2--sphere with $I=[0,1]$
which acts trivially on $I$ 
(i.e.\ preserves top and bottom).
We denote by $\bar\rho_1$ 
the projection of the $G$--action $\rho_1$ to $S^2$.
We regard $\Si$ as a 
$\rho_1$--invariant embedded 2--sphere $\Sigma\subset S^2\times I$. 
Since the actions 
${\rho_1}|_{\Si}$ and $\bar\rho_1$ are conjugate,
we have a $(\rho_1,\bar\rho_1)$--equivariant diffeomorphism 
$\psi\co \Si\to S^2$. 

Proposition~\ref{subact} follows from: 

\begin{lem}
\label{isotopic}
$\Si$ is $\rho_1$--equivariantly isotopic to a horizontal sphere $S^2\times t$.
\end{lem}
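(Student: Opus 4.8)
The goal is to show that the $\rho_1$--invariant 2--sphere $\Sigma \subset S^2 \times I$ is $\rho_1$--equivariantly isotopic to a horizontal sphere $S^2 \times \{t\}$. The natural strategy is to pass to the quotient orbifolds and make $\Sigma$ transversal to the product foliation ${\mathcal F}$ by horizontal spheres, then push the quotient surface $\bar\Sigma = \Sigma/G$ across the quotient orbifold $(S^2 \times I)/G$ using an innermost--curve argument adapted to the orbifold setting. Throughout, one works $G$--equivariantly upstairs, which amounts to working with (possibly singular) arcs and curves in the quotient orbifold downstairs.

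First I would put $\Sigma$ into general position with respect to the foliation ${\mathcal F}$ by horizontal 2--spheres; equivalently, I would arrange that the height function $\Sigma \to I$ has only nondegenerate critical points and distinct critical values, all done $\rho_1$--equivariantly (i.e.\ the critical points come in $G$--orbits). Since $\Sigma$ is a sphere, if it has no critical points other than one maximum and one minimum it is already a graph over $S^2$ and hence isotopic to a horizontal sphere; so the content is to remove the remaining critical points. I would argue by downward induction on the number of critical points (or on the genus of the level surfaces, which for spheres in $S^2 \times I$ are unions of planar pieces). The key local move: consider a level just below a lowest saddle; the level set $\Sigma \cap (S^2 \times \{s\})$ is a disjoint union of circles in the sphere $S^2 \times \{s\}$, hence one of them bounds an innermost disk $D$ in $S^2 \times \{s\}$ missing the singular locus of the quotient; this disk can be used to guide a $\rho_1$--equivariant isotopy of $\Sigma$ that compresses it and cancels a pair of critical points, reducing the count. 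The innermost--disk argument is legitimate in the quotient because a spherical 2--orbifold is planar enough that an innermost quotient curve bounds a disk (possibly with at most one cone point, as in Lemma~\ref{isotoid_sphdisk}(ii)), and isotopies rotating around cone points are permitted.

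The main obstacle, as in all such arguments, is keeping everything equivariant and controlling the interaction of $\Sigma$ with the (one-dimensional, if present) singular locus of the $G$--action on $S^2 \times I$ — the fixed circles of reflections and the rotation axes. One must ensure that the chosen innermost compressing disks can be taken $G$--invariant or, when an orbit of such disks is only generically disjoint, that the family is simultaneously compressible; here the planarity of spherical 2--orbifolds and Proposition~\ref{equivdiffeo2sph} (equivariant diffeomorphisms of $S^2$ are standard) are what make the bookkeeping go through. Once $\Sigma$ is a graph over $S^2$, i.e.\ of the form $\{(x, f(x))\}$ for a $\bar\rho_1$--equivariant function $f\co S^2 \to \Int I$, the straight--line homotopy $f_u = (1-u)f + u\,t_0$ is automatically $\rho_1$--equivariant and furnishes the desired isotopy to $S^2 \times \{t_0\}$, completing the proof of Lemma~\ref{isotopic} and hence of Proposition~\ref{subact}.
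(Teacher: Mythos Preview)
Your Morse-theoretic/innermost-disk outline is a reasonable strategy in spirit, and it is genuinely different from the paper's case-by-case argument, but as written it has real gaps precisely at the equivariant steps you flag as ``bookkeeping''.

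First, you never invoke the key geometric input established just before the lemma: for every rotation $\rho_1(g)$ the axis $p\times I$ meets $\Sigma$ transversally in \emph{exactly one} point, and for every reflection the mirror $\mu\times I$ meets $\Sigma$ in \emph{exactly one} circle. The paper uses this to straighten $\Sigma$ over a neighborhood of the singular locus $\Gamma\times I$ (in the reflection case) or over $F\times I$ (in the rotation case), and only then passes to the quotient, where the problem becomes the non-equivariant Sublemma~\ref{horsurf} about properly embedded planar surfaces in $\Sigma_1\times[0,1]$. Without this one-point/one-circle control, your innermost-disk compression can fail: an innermost circle in a level $S^2\times\{s\}$ may bound a disk containing several cone points (``innermost'' refers to the curves of $\Sigma\cap(S^2\times\{s\})$, not to cone points), and the corresponding disk on $\Sigma$ need not match up equivariantly. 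Your assertion that the innermost quotient disk has ``at most one cone point'' is not justified.

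Second, you do not treat the case where $\bar\rho_1(G)$ contains the antipodal map of $S^2$, i.e.\ where the quotient horizontal sphere is $\R P^2$ with no singular locus at all. Here there is no ``planarity'' to exploit and no cone point to anchor the isotopy; the paper handles this separately by citing Livesay \cite{Livesay} (and Hirsch--Smale), and this is genuinely where the difficulty lies. Your framework gives no mechanism for this case. In short, the paper's proof splits into three cases (reflections present; the free $\Z_2$--involution; orientation-preserving rotations only), uses the intersection-number input to reduce each to a concrete non-equivariant statement, and cites Livesay for the delicate free case --- these are exactly the steps your proposal leaves unresolved.
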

\begin{proof}
We know from Corollary~\ref{fixptinball}:
For any rotation $\rho(g)$ on $S^3$ 
each of the two intervals 
$p\times I$ and $\hat p\times I$ fixed by $\rho_1(g)$
intersects $\Si$ transversally in one point. 
For any reflection $\rho(g)$ at a 2--sphere in $S^3$,
$\bar\rho_1(g)$ acts on $S^2$ as the reflection at a great circle $\mu$,
and $\Si$ intersects $\mu\times I$ transversally in one circle. 

{\em Case 1.}
Suppose first that $\rho(G)$ does contain reflections at 2--spheres. 
The mirror great circles in $S^2$ 
of the corresponding reflections in $\bar\rho_1(G)$ 
partition $S^2$ into isometric convex polygonal tiles
which are either hemispheres, bigons or triangles.
(Polygons with more than three vertices are ruled out by Gau\3-Bonnet.) 
Every intersection point of mirror circles 
is a fixed point of a rotation $\bar\rho_1(g)$.
Further fixed points of rotations $\bar\rho_1(g)$ 
can lie in the (in)centers of the tiles. 
This can occur only if the tiles are hemispheres, bigons or 
equilateral right-angled triangles. 
(No midpoint of an edge can be fixed by a rotation $\bar\rho_1(g)$
because then another mirror circle would have to run through this fixed point,
contradicting the fact that it is not a vertex.)

Denote by $\Ga\subset S^2$ the union of the mirror circles $\mu$ 
of all reflections in $\bar\rho_1(G)$. 
It is a $\bar{\rho}_{1}$--invariant great circle or connected geodesic graph. 
Note that, in the second case,
when $\si$ is an edge of $\Ga$ contained in the mirror $\mu$
then the circle $\Si\cap\mu\times I$ 
intersects both components of $\D\si\times I$ 
transversally in one point and 
$\Si\cap\si\times I$ is a curve connecting them. 
It follows, in both cases,
that $\Si$ can be $G$--equivariantly isotoped
to be horizontal over a neighborhood of $\Ga$. 
The tiles (components of $S^2-\Ga$) 
are topologically disks,
and for any tile $\tau$ the intersection 
$\Si\cap\bar\tau\times I$ is a disk 
(since it is bounded by a circle).

To do the isotopy over the 2--skeleton, let us divide out the $G$--action. 
We consider the spherical 2--orbifold ${\mathcal O}^2=S^2/\bar\rho_1(G)$.
It has reflector boundary, 
its underlying topological space is the 2--disk,
and possibly there is one cone point in the interior. 
The quotient 2--orbifold $\Si/\rho_{1}(G)\subset{\mathcal O}\times I$
is diffeomorphic to ${\mathcal O}$ 
and the embedding is horizontal over $\D{\mathcal O}$.
If ${\mathcal O}$ has no cone point in the interior
then it follows with Alexander's Theorem,
see e.g.\ Hatcher \cite[Theorem 1.1]{Hatcher}, 
that $\Si/\rho_1(G)$ can be made horizontal 
by an isotopy fixing the boundary, 
so the assertion of the Proposition holds in this case. 
If ${\mathcal O}$ has one cone point in its interior,
the assertion is a consequence of the annulus case 
of the following standard result:

\begin{sublem}
\label{horsurf}
Let $\Si_1\cong S^2-\cup_{i=0}^nD_i$, $n\geq0$,  
where the $D_i$ are open disks with disjoint closures. 
Suppose that $\Si_2\subset\Si_1\times [0,1]$ 
is a properly embedded surface,
$\D\Si_2\subset\D\Si_1\times(0,1)$, 
and that $\phi\co \Si_2\to\Si_1$ is a diffeomorphism 
which near the boundary coincides with the canonical projection onto $\Si_1$.

Then $\Si_2$ can be isotoped to be horizontal.
\end{sublem}
\begin{proof}
Let $\al_1,\dots,\al_n$ be disjoint properly embedded arcs in $\Si_1$ 
such that $\D\Si_1\cup\al_1\cup\dots\cup\al_n$ is connected. 
Then cutting $\Si_1$ along the $\al_i$ yields a disk. 
We may assume that $\Si_2$ intersects the strips $\al_i\times I$ 
transversally. 
Each intersection $\Si_2\cap\al_i\times I$ consists of an arc $\beta_i$ 
connecting the components of $\D\al_i\times I$ 
and finitely many, possibly zero, circles.
Note that $\D\Si_2\cup\beta_1\cup\dots\cup\beta_n$ 
is connected and hence 
cutting $\Si_2$ along the $\beta_i$ also yields a disk. 

Suppose that $\ga\subset\Si_2\cap\al_i\times I$ is a circle.
It lies in the complement of $\cup_j\beta_j$ 
and thus bounds a disk $D\subset\Si_2$. 
Suppose in addition 
that $\ga$ is innermost on $\Si_2$ 
in the sense that 
$D\cap(\cup_j\al_j\times I)$ is empty. 
Let $D'$ be the disk bounded by $\ga$ in $\al_i\times I$.  
($\ga$ is not necessarily innermost in $\al_i\times I$, too,
i.e.\ $D'$ may intersect $\Si_2$ in other circles.) 
It follows from Alexander's Theorem 
(applied to the ball obtained from cutting $\Si_1\times I$ 
along the strips $\al_i\times I$)
that the embedded 2--sphere 
$D\cup D'$ bounds a 3--ball, 
and by a suitable isotopy we can reduce 
the number of circle components of the intersection of $\Si_2$ 
with $\cup_j\al_j\times I$. 
After finitely many steps
we can achieve that $\Si_2\cap\al_i\times I=\beta_i$ for all $i$. 
After a suitable isotopy of $\Si_2$ rel $\D\Si_2$ 
we may assume that the projection onto $\Si_1$ 
restricts to diffeomorphisms $\beta_i\buildrel\cong\over\to\al_i$. 

We now cut $\Si_1\times I$ along the strips $\al_i\times I$ 
and obtain a ball $\check\Si_1\times I$.
The surface $\Si_2$ becomes a properly embedded disk 
$\check\Si_2\subset\check\Si_1\times I$.
Moreover, 
$\D\check\Si_2\subset\D\check\Si_1\times (0,1)$ 
and the projection onto $\check\Si_1$ induces a diffeomorphism 
$\D\check\Si_2\to\D\check\Si_1$. 
Applying Alexander's Theorem once more,
we conclude that there exists an isotopy of $\check\Si_2$ rel $\D\check\Si_2$
which makes $\check\Si_2$ transversal to the interval fibration,
i.e.\ such that the projection onto $\check\Si_1$ induces a diffeomorphism 
$\check\Si_2\to\check\Si_1$. 
The assertion follows. 
\end{proof}

We continue the proof of Lemma~\ref{isotopic}. 

{\em Case 2.}
If $\rho(G)$ is the group of order two generated by an involution of $S^3$ 
with two fixed points 
then the assertion follows from Livesay \cite[Lemma 2]{Livesay}. 
(See also Hirsch and Smale \cite{HirschSmale}.) 
This finishes the proof in the case 
when $\rho$ does not preserve orientation.

{\em Case 3.}
We are left with the case when $\rho$ preserves orientation.
We consider the nonempty finite set $F\subset S^2$
of fixed points of nontrivial rotations in $\bar\rho_1(G)$.
We recall that $\Si$ intersects every component of $F\times I$ 
transversally in one point. 
Let $\dot S\subset S^2$ be the compact subsurface obtained from removing 
a small (tubular) neighborhood around $F$.
Let $\dot\Si=\Si\cap(\dot S\times I)$. 
As above, we divide out the $G$--action 
and consider the properly embedded surface
$\dot\Si/\rho_1(G)\subset\dot S\times I/\rho_1(G)$.
Its boundary is contained in 
$\D\dot S\times(0,1)/\rho_1(G)$.
Note that $\dot\Si/\rho_1(G)\cong\dot S/\bar\rho_1(G)$ 
because the actions $\rho_1|_{\Si}$ and $\bar\rho_1$ are conjugate. 
These surfaces are spheres with 2 or 3 disks removed,
corresponding to the fact that $\Si/\rho(G)$ is an oriented spherical 
2--orbifold with cone points, and the number of cone points can only be 2 or 3. 
We can choose orientations on $\Si$ and $S^2$ 
such that the canonical projection 
induces an orientation preserving diffeomorphism 
$\D\dot\Si/\rho_1(G)\to\D\dot S/\bar\rho_1(G)$.
It extends to an orientation preserving diffeomorphism 
$\phi\co \dot\Si/\rho_1(G)\to\dot S/\bar\rho_1(G)$.
Now Sublemma~\ref{horsurf} implies that $\dot\Si/\rho_1(G)$
can be isotoped to be horizontal. 
The assertion follows also in this case.

This concludes the proof of Lemma~\ref{isotopic}.
\end{proof}

\section{Tube--cap decomposition}
\label{sec:tcdec}

In this section, 
we recall some well-known material on Ricci flows
and adapt it to our setting. 

In a Ricci flow with surgery,
the regions with sufficiently large positive scalar curvature 
are well approximated, up to scaling, 
by so-called $\kappa$-- and standard solutions.
These local models are certain special Ricci flow solutions 
with time slices of nonnegative sectional curvature.
Some of their properties are summarized in section~\ref{sec:locmod}. 
Crucial for controlling the singularities of Ricci flow 
and hence also for our purposes, 
is their {\em neck--cap geometry}:  
Time slices of $\kappa$-- or standard solutions are 
mostly necklike, 
i.e.\ almost everywhere almost round cylindrical 
with the exception of at most two regions, so-called ``caps'', of bounded size
(relative to the curvature scale). 

The neck--cap alternative carries over to regions in a Riemannian 3--manifold
which are well approximated by the local models.
One infers that, globally, 
the region of sufficiently large positive scalar curvature 
in a time slice of a Ricci flow consists of tubes and caps, 
see section~\ref{sec:eqtc}. 
The tubes are formed by possibly very long chains of overlapping necks, 
see section~\ref{sec:fol}. 
One has very precise control of their geometry,
namely they are almost cylindrical of varying width. 
In the time slice of a Ricci flow,  
the quality of approximation improves as scalar curvature increases.
Hence the thinner a tube becomes, 
the better it is approximated by a round cylinder. 
The caps, on the other hand, 
enclose the small ``islands'' far apart from each other 
whose geometry is only roughly known, 
compare section~\ref{sec:cap}. 
Tubes and caps can be adjusted 
to yield an equivariant decomposition, cf.\ section~\ref{sec:eqtc}.

\subsection{Some definitions and notation} 
\label{sec:defnot}

We call a diffeomorphism $\phi\co (M_1,g_1)\to(M_2,g_2)$ 
of Riemannian manifolds an {\em $\eps$--isometry}, $\eps>0$,
if $\phi^*g_2$ is $\eps$--close, in the sense of a strict inequality, to $g_1$ 
in the ${\mathcal C}^{[\frac{1}{\eps}]+1}$--topology. 
We call $\phi$ an {\em $\eps$--homothety}, 
if it becomes an $\eps$--isometry
after suitably rescaling $g_2$.

We say that an action $\rho\co G\acts (M,g)$ 
is {\em $\eps$--isometric}, 
if $\rho(\ga)$ is an $\eps$--isometry for all $\ga\in G$. 

Given a point $x$ with scalar curvature $\scal(x)>0$ in a Riemannian manifold, 
we define the distance from $x$
{\em relative to its curvature scale} 
by $\tilde d(x,\cdot):=\scal^{\half}(x)\cdot d(x,\cdot)$.
Accordingly, we define the relative {\em radius} of a subset $A$ by 
$\rrad(x,A):=\sup\{\tilde d(x,y):y\in A\}$, 
and the {\em ball} $\tildeB(x,r):=\{\tilde d(x,\cdot)<r\}$. 

We say that the pointed Riemannian manifold $(M_1,x_1,g_1)$
{\em $\eps$--approximates} $(M_2,x_2,g_2)$
if $\scal(x_1)>0$ and 
if there exists an $\eps$--homothety
$\phi\co (\tildeB_{1/\eps}^{M_1}(x_1),\scal(x_1)g_1)\to 
(V_2,g_2)$ 
onto an open subset $V_2$ of $M_2$
with $\phi(x_1)=x_2$. 
We will briefly say 
that $(M_1,x_1,g_1)$ is {\em $\eps$--close} to $(M_2,x_2,g_2)$.
Note that this definition is scale invariant, whereas the definition
of $\eps$--isometry is not.
\begin{defn}[neck]
\label{def:neck}
Let $(M^3,g)$ be a Riemannian 3--manifold.
We call an open subset ${\mathcal N}\subset M$
an {\em $\eps$--neck}, $\eps>0$,  
if there exists an $\eps$--homothety
\begin{equation}
\label{eq:neck} 
\phi\co S^2(\sqrt{2})\times(-\tfrac{1}{\eps},\tfrac{1}{\eps})\to {\mathcal N}
\end{equation}
from the standard round cylinder 
of scalar curvature 1 and length $\frac{2}{\eps}$ onto ${\mathcal N}$.
We refer to $\phi$ as a {\em neck chart}
and to a point $x\in\phi(S^2(\sqrt{2})\times\{0\})$ 
as a {\em center} of the $\eps$--neck ${\mathcal N}$.
\end{defn}
Note that throughout this paper, all approximations are applied to
time-slices only. 
In particular, 
the necks considered here are not strong necks in the sense of 
Perelman \cite{Perelman_surgery}. 

We denote the open subset consisting of all centers of $\eps$--necks
by $M_{\eps}^{\neck}$, 
and its complement by $M_{\eps}^{\nn}$. 
We measure the {\em necklikeness} in a point $x$ by 
$\nu(x):=\inf\{\eps>0:x\in M_{\eps}^{\neck}\}$. 
For a neck chart (\ref{eq:neck}) one observes that 
$\nu<\frac{1}{1-|a|}\eps$ 
on $\phi(S^2(\sqrt{2})\times\{\frac{a}{\eps}\})$ 
for $-1<a<1$. 
Thus $\nu\co M\to[0,\infty)$ is a continuous function. 
We have that $M_{\eps}^{\neck}=\{\nu<\eps\}$. 
If $\nu$ attains the value zero in some point $x$
then that connected component of $M$ is homothetic to the 
complete round standard cylinder $S^2(\sqrt{2})\times\R$
and $\nu\equiv0$ there. 

\medskip
The following notion will be used to describe 
the non-necklike regions near singularities of Ricci flows, 
compare Propositions~\ref{capsinmodel} and~\ref{caps} below.

\begin{defn}[$(\eps,d)$--cap]
\label{def:cap}
An incomplete Riemannian 3--manifold $C$ 
with strictly positive scalar curvature,
which is diffeomorphic to $B^3$ or $\R P^3-\bar B^3$, 
is called an 
{\em $(\eps,d)$--cap} centered at the point $x$ 
if the following holds:
There exists an $\eps$--neck ${\mathcal N}\subset C$ 
centered at a point $z$ with $\tilde d(x,z)=d$ 
which represents the end of $C$. 
Furthermore, 
$x\not\in{\mathcal N}$ and the compact set $C-{\mathcal N}$ 
is contained in $\tildeB(x,d)$. 
\end{defn}

Note that unlike other authors 
we prescribe a fixed relative diameter for caps
instead of just an upper diameter bound. 
However, this difference is inessential 
because in the local models of sufficiently large diameter 
the non-necklike region consists of at most two components 
of relative bounded diameter, cf.\ Proposition~\ref{capsinmodel}. 
Thus the diameter of caps may be adapted by 
extending their necklike ends.

\subsection{Properties of $\kappa$-- and standard solutions}
\label{sec:locmod}

$\kappa$-- and standard solutions serve as the local models 
for the regions of large positive scalar curvature 
in Ricci flows with surgery.  
Detailed information can be found in 
Kleiner and Lott \cite[Sections 38--51 and 59--65]{KL}, Morgan and Tian
\cite[Chapter 9 and 12]{MorganTian_poinc} and Bamler \cite[Sections 5 and 7.3]{Bamler}. 
We summarize some of their properties most relevant to us.
All $\kappa$--solutions considered below will be 3--dimensional, orientable
and connected. 
The standard solutions are assumed to have a fixed initial metric.

{\em Rigidity.}
The time slices of $\kappa$-- and standard solutions 
have nonnegative sectional curvature.
The time-$t$ slices, $t>0$, 
of standard solutions have strictly positive sectional curvature.
If the sectional curvature of a time slice of a $\kappa$--solution
is not strictly positive,
then the $\kappa$--solution is a shrinking round cylinder 
or its orientable smooth $\Z_2$--quotient. 
In particular, its time slices are noncompact.
(Note that a cyclic quotient of the shrinking round cylinder 
is {\em not} a $\kappa$--solution because time slices far back in the past
are arbitrarily collapsed.) 

{\em Topological classification.}
The topology of the time slice of a $\kappa$--solution 
with strictly positive sectional curvature can be derived from general results 
about positively curved manifolds. 
It is diffeomorphic to $\R^3$ in the noncompact case, see Cheeger--Gromoll 
\cite{soul} and Gromoll--Meyer \cite{poscurv},
and to a spherical space form $S^3/\Ga$ in the compact case by Hamilton
\cite{Hamilton_posric}. 
The time slices of standard solutions are $\cong\R^3$ by definition. 

{\em Universal noncollapsedness.}
There exists a universal constant $\kappa_0>0$ such that 
any $\kappa$--solution is a $\kappa_0$--solution 
unless it is a shrinking spherical space form 
(with large fundamental group). 
Standard solutions are uniformly noncollapsed, as well. 

{\em Compactness.}
The space of pointed $\kappa_0$--solutions 
equipped with the ${\mathcal C}^{\infty}$--topology 
is compact modulo scaling. 
Also the space of pointed standard solutions 
(with fixed initial condition and 
with the tip of the time-$0$ slice as base point) 
is compact \cite[Lemma 64.1]{KL}. 

Let $(N_i,x'_i)$ be a sequence 
of pointed time-$t_i$ slices of standard solutions
and suppose that $\lim_{i\to\infty}t_i=t_{\infty}\in[0,1]$. 
Then, after passing to a subsequence, 
the renormalized time slices 
$\scal(x'_i)^{\half}\cdot(N_i,x'_i)$ 
converge to a time slice $(N_{\infty},x'_{\infty})$
of a $\kappa_0$-- or a renormalized standard solution. 
The limit is a time slice of a $\kappa_0$--solution 
if $t_{\infty}=1$ or if $x'_i\to\infty$ 
(on the manifold underlying the initial condition of standard solutions),
cf.\ \cite[Lemmas 61.1 and 63.1]{KL}.
In the latter case, the limit is the round cylinder with scalar curvature 
$\equiv1$. 
In particular, 
the space of all (curvature) renormalized pointed time slices 
of $\kappa_0$-- or standard solutions is compact. 

These compactness results yield uniform bounds for 
all (scale invariant) geometric quantities,
compare e.g.\ Addendum~\ref{add:bounds} below.

{\em Mostly necklike.}
Time slices of $\kappa$-- or standard solutions are
almost everywhere almost round cylindrical 
with the exception of at most two caps of bounded size. 
More precisely, one has the following information: 
\begin{prop}[Caps in local models]
\label{capsinmodel}
For any sufficiently small $\eps>0$ 
there exist constants $D'(\eps)>d'(\eps)>0$ such that the following hold: 

(i) Suppose that 
$(N,x')$ is a pointed time slice of a $\kappa_0$-- or standard solution 
and that $x'\in N_{\eps}^{\nn}$ with $\rrad(x',N)>D'$. 
Then $x'$ is the center of an $(\eps,d')$--cap $C\subset N$.
Moreover, 
if ${\mathcal N}\subset C$ is an $\eps$--neck 
representing the end of $C$
as in definition~\ref{def:cap},
then ${\mathcal N}\subset N_{\eps}^{\neck}$. 

(ii) If $\hat C\subset N$ is another $(\eps,d')$--cap 
centered at a point $\hat x'\not\in C$, 
then $\hat C\cap C=\emptyset$. 
In this case, $N$ is the time slice of a compact $\kappa_0$--solution
and $N_{\eps}^{\nn}\subset C\cup\hat C$.
\end{prop}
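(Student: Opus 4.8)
The plan is to deduce both parts from the compactness of the space of (curvature) renormalized pointed time slices of $\kappa_0$-- or standard solutions, together with the neck--cap structure that holds on the ``limiting'' noncompact $\kappa_0$--solution, namely $\R^3$ or the round cylinder or its $\Z_2$--quotient. The key point is a contradiction/compactness argument: there is no single $\eps$, $D'$, $d'$ that works, so we get a sequence of pointed time slices $(N_i,x_i')$ with $x_i'\in (N_i)_{\eps_i}^{\nn}$, $\rrad(x_i',N_i)>D_i'\to\infty$, for which the conclusion of (i) fails; after renormalizing so that $\scal(x_i')=1$ and passing to a subsequence, these converge in the pointed $\mathcal C^\infty$ sense to a pointed time slice $(N_\infty,x_\infty')$ of a $\kappa_0$-- or renormalized standard solution. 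Because $\rrad(x_i',N_i)\to\infty$, the limit is \emph{complete and noncompact}; by the rigidity and topological classification recalled above it is either $\R^3$ (with a nonnegatively, and in the standard case strictly positively, curved metric), or the round shrinking cylinder $S^2(\sqrt2)\times\R$, or its orientable $\Z_2$--quotient.

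First I would establish the neck--cap picture on such a noncompact limit. On a complete noncompact time slice of a $\kappa_0$--solution, soul theory (Cheeger--Gromoll, plus the splitting/Toponogov arguments that are standard in this context) gives that either every point is a neck center — the cylinder and its quotient cases, which are excluded once the point $x_\infty'$ is non-necklike — or the manifold is diffeomorphic to $\R^3$ and has a compact ``core'': there is a bounded region outside of which the manifold is $\eps$--necklike, in fact asymptotic to a round cylinder. One checks that any point sufficiently far from a fixed basepoint lies on an $\eps$--neck, and that a point which is \emph{not} a neck center must lie within a definite distance $d_0$ (relative to curvature scale, which here is normalized) of the core, hence is the center of an $(\eps,d_0)$--cap whose necklike end $\mathcal N$ consists of genuine neck centers. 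This is where I would be careful: I want not just ``$x_\infty'$ has a cap'' but a \emph{uniform} bound $d_0$ independent of which noncompact model arises, and the statement ``$\mathcal N\subset N_\infty^{\neck}$'' with strict inequality $\nu<\eps$ on $\mathcal N$. Uniformity comes precisely from the compactness of the space of renormalized pointed noncompact $\kappa_0$-- and standard solutions; the strictness is arranged by taking $\mathcal N$ a bit deeper into the asymptotically cylindrical end than where necklikeness first kicks in, exactly as in the remark after Definition~\ref{def:cap}.

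Next I would transfer this back to $(N_i,x_i')$ for $i$ large via $\mathcal C^\infty$--convergence on larger and larger balls: the cap $C_\infty\subset N_\infty$ and its neck $\mathcal N_\infty$ pull back under the approximating diffeomorphisms to subsets $C_i\subset N_i$, $\mathcal N_i\subset C_i$, which for $i$ large are an $(\eps,d')$--cap and an $\eps$--neck, with $\mathcal N_i\subset (N_i)_\eps^{\neck}$, because all the relevant geometric quantities ($\mathcal C^{[1/\eps]+1}$--norms of metric differences, curvature, injectivity radius, diameters of $C-\mathcal N$) converge. This contradicts the failure of (i), proving (i). For (ii), suppose $\hat C\subset N$ is a second $(\eps,d')$--cap centered at $\hat x'\notin C$; if $N$ were the time slice of a \emph{noncompact} $\kappa_0$-- or standard solution, the core-plus-one-end structure forces a unique cap region, so the presence of two disjoint caps whose complement $N_\eps^{\nn}$ they cover means the non-necklike set has two components, which by the same core analysis can only happen when $N$ is the time slice of a \emph{compact} $\kappa_0$--solution (the ``dumbbell''/round sphere type, $S^3/\Gamma$ with small $\Gamma$); the disjointness $\hat C\cap C=\emptyset$ and $N_\eps^{\nn}\subset C\cup\hat C$ then follow from tracking which points fail to be neck centers, again by a compactness argument ruling out intermediate behavior. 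The main obstacle is the first transfer step done \emph{uniformly}: making sure the cap radius $d'$ and the gap $D'>d'$ can be chosen depending only on $\eps$ (via the compactness of the model space) rather than on the individual solution, and simultaneously ensuring the strict necklikeness of the end $\mathcal N$ survives the limit; everything else is routine once the limiting neck--cap structure of noncompact $\kappa_0$--solutions is in hand.
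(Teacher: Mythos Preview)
The paper does not actually prove Proposition~\ref{capsinmodel}; it simply records the statement and then, after Addendum~\ref{add:bounds}, cites the relevant results in Morgan--Tian, Kleiner--Lott, and Bamler (for $\kappa$--solutions: \cite[Theorem 9.93]{MorganTian_poinc}, \cite[Corollary 48.1, Lemma 59.7]{KL}, \cite[Lemma 5.4.10, Theorems 5.4.11 and 5.4.12]{Bamler}; for standard solutions: \cite[Lemma 63.1]{KL}, \cite[Theorem 7.3.4]{Bamler}). So there is no ``paper's own proof'' to compare against; your sketch is essentially the argument one finds in those references, in particular the compactness/contradiction mechanism of \cite[Lemma 59.7]{KL}: pass to a limiting noncompact model, establish the cap structure there directly via soul theory and the asymptotic cylinder, get uniformity of $d'$ from compactness of the model space, and then pull back. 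That outline is sound.

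One correction, though: you assert that in the orientable $\Z_2$--quotient $S^2\times_{\Z_2}\R$ of the round cylinder ``every point is a neck center'', and use this to exclude that case once $x'_\infty$ is non-necklike. This is false. The quotient by $(p,t)\mapsto(-p,-t)$ is diffeomorphic to $\R P^3-\{\mathrm{pt}\}$; points near the core $\R P^2$ (the image of $S^2\times\{0\}$) are \emph{not} centers of $\eps$--necks, since a neck chart would lift to an embedding of $S^2\times(-\tfrac{1}{\eps},\tfrac{1}{\eps})$ in $S^2\times\R$ covering a neighborhood of $t=0$, which is incompatible with the involution for small $\eps$. So the limit $(N_\infty,x'_\infty)$ can perfectly well be this quotient with $x'_\infty$ near the core. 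This does not break your argument, because the quotient already has an obvious $(\eps,d')$--cap structure with $C\cong\R P^3-\bar B^3$ (which is exactly why Definition~\ref{def:cap} allows this topological type), and the neck representing its end lies in $N_{\eps}^{\neck}$. You just need to include this case alongside the $\R^3$/standard-solution case rather than lumping it with the cylinder; once you do, the rest of your transfer argument and the discussion of part~(ii) go through as written.
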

Regarding the geometry of the caps, 
the compactness theorems for the local models 
imply the existence of curvature and diameter bounds.
(This enters already in part (ii) of the previous Proposition.)

\begin{add}[Uniform geometry of caps]
\label{add:bounds}
There exist constants 
$c'_1(\eps)$, $c'_2(\eps)$, $c'_3(\eps,D'')>0$ and $\bar d'(\eps)>0$ 
such that the following holds: 

On the $(\eps,d')$--cap $C$ centered at $x'$ we have 
$c'_1\scal(x')\leq \scal\leq c'_2\scal(x')$. 
Moreover $\rrad(x',C)<\bar d'$. 

If $N$ is compact with $\rrad(x',N)<D''$,  
then $sec\geq c'_3\scal(x')$ on $N$. 
\end{add}

These facts follow from \cite[Theorem 9.93]{MorganTian_poinc}, 
\cite[Corollary 48.1, Lemma 59.7]{KL}, or 
\cite[Lemma 5.4.10, Theorems 5.4.11 and 5.4.12]{Bamler} for
$\kappa$--solutions and from 
\cite[Lemma 63.1]{KL} or
\cite[Theorem 7.3.4]{Bamler} for standard solutions.

\subsection{Foliating the necklike region} 
\label{sec:fol}

Let $(M^3,g)$ be a Riemannian 3--manifold. 
In this section we discuss the global geometry of the necklike region 
and explain that chains of $\eps$--necks fit to almost cylindrical tubes,
possibly long and of varying width.  

In the following,
$\eps_0\in(0,\frac{1}{2008}]$ 
will be a universal sufficiently small positive constant.  

We call a unit tangent vector $v$ at a point $x\in M_{\eps_0}^{\neck}$ 
a {\em distant direction}
if there exists a geodesic segment of length 
$>\scal^{-\half}(x)\frac{1}{2\nu(x)}$ 
starting from $x$ in the direction $v$.
The smaller $\nu(x)$, 
the closer any two distant directions $v_1$ and $v_2$ in $x$ 
are up to sign,
$v_1\simeq\pm v_2$. 
In any point $x\in M_{\eps_0}^{\neck}$ 
exists a pair of almost antipodal distant directions. 

Let $x\in M_{\eps}^{\neck}$, $0<\eps\leq\eps_0$,  
and let $\phi$ be an associated neck chart, cf.\ (\ref{eq:neck}). 
Consider the composition 
$h:=\pi_{(-\frac{1}{\eps},\frac{1}{\eps})}\circ\phi^{-1}$ 
where $\pi_{(-\frac{1}{\eps},\frac{1}{\eps})}\co 
S^2(\sqrt{2})\times(-\frac{1}{\eps},\frac{1}{\eps})
\to(-\frac{1}{\eps},\frac{1}{\eps})$ 
denotes the projection onto the interval factor.
If $\eps$ is sufficiently small,
then the level sets $h^{-1}(t)$ 
are almost totally geodesic and almost round 2--spheres 
with scalar curvature $\simeq \scal(x)$. 
Those not too far from $x$,
say with $-\frac{1}{2\eps}<t<\frac{1}{2\eps}$, 
are almost orthogonal to distant directions in $x$. 
Note that $\|dh\|\simeq \scal^{\half}$. 

Let $h_i$ ($i=1,2$)
be two such functions associated to $\eps$--neck charts $\phi_i$, 
and let 
$V_i=\phi_i(S^2(\sqrt{2})\times(-\frac{1}{2\eps},\frac{1}{2\eps}))$
be the central halves of the corresponding necks ${\mathcal N}_i$. 
Note that the coordinate change 
$\phi_2^{-1}\circ\phi_1$ 
between the two neck charts
is close to an isometry of the standard round cylinder 
(in a ${\mathcal C}^k$--topology for large $k$). 
In particular, 
after adjusting the signs of the $h_i$ if necessary,
the 1--forms $dh_i$ are close to each other 
on the overlap $V_1\cap V_2$, 
and so are the plane fields $\ker\,dh_i$ 
and the $h_i$--level spheres through any point $x\in V_1\cap V_2$. 
The latter ones can be identified e.g.\ by following the gradient flow lines 
of $h_1$ or $h_2$. 

To obtain the global picture,
we now cover $M_{\eps_0}^{\neck}$ with $\eps$--necks.
By interpolating the associated (pairs of) 1--forms $\pm dh$ 
(which may be understood as sections of $T^*M/\{\pm1\}$) 
we will obtain a global foliation  
by almost totally geodesic and almost round 2--spheres 
which are cross sections of $\eps$--necks. 
The geometry of the foliation approaches 
the standard foliation of the round
cylinder by totally geodesic 2--spheres
as the necklikeness approaches zero.

\begin{lem}[Foliation by 2--spheres]
\label{folneck}
There exist an open subset $F$ with 
$\ol{M_{\eps_0}^{\neck}}\subseteq F\subseteq M$,
a closed smooth 1--form $\al$ on $F$ 
and a monotonically increasing function $\theta\co [0,\eps_0)\to[0,\infty)$,
continuous in 0 with $\theta(0)=0$,
such that the following properties are satisfied: 
\BI
\item $\|\al\|\simeq \scal^{\half}$, i.e.\ 
$1-\theta\circ\nu\leq\scal^{-\half}\|\al\|\leq1+\theta\circ\nu$. 
\item The complete integral manifolds of the 
plane field $\ker\,\al$ are 2--spheres foliating $F$.
We denote this foliation by ${\mathcal F}$.
\item Let $x\in F$.
Then, up to scaling,  
the foliation ${\mathcal F}$ is 
on $\tildeB(x,\frac{1}{2\nu(x)})\cap F$ 
$\theta(\nu(x))$--close 
in the 
${\mathcal C}^{[\frac{1}{\theta(\nu(x))}]+1}$--topology 
to the foliation of the standard round cylinder 
by totally-geodesic cross-sectional 2--spheres. 
\EI
\end{lem}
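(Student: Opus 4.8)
\textbf{Proof plan for Lemma~\ref{folneck}.}
The plan is to build the global $1$--form $\al$ by patching together the locally defined $1$--forms $\pm dh$ coming from a well-chosen cover of $\ol{M_{\eps_0}^{\neck}}$ by central halves of $\eps$--necks, where $\eps$ varies from point to point and is comparable to $\nu$. First I would fix, for each $x\in\ol{M_{\eps_0}^{\neck}}$, an $\eps(x)$--neck chart $\phi_x$ with $\eps(x)$ slightly larger than $\nu(x)$, let $V_x$ be its central half $\phi_x(S^2(\sqrt2)\times(-\tfrac{1}{2\eps(x)},\tfrac{1}{2\eps(x)}))$, and set $\al_x:=dh_x$ as above, so that $\|\al_x\|\simeq\scal^{\half}$ and the level spheres of $h_x$ are almost totally geodesic, almost round, almost orthogonal to distant directions. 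The discussion preceding the Lemma already records the crucial \emph{compatibility}: on an overlap $V_x\cap V_y$ the coordinate change $\phi_y^{-1}\circ\phi_x$ is $\mathcal C^k$--close to an isometry of the standard cylinder, hence (after adjusting signs) $\al_x$ and $\al_y$ are close as sections of $T^*M/\{\pm1\}$, with closeness controlled by $\max(\nu$ on the overlap$)$.

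Next I would pass from these $\pm$--ambiguous forms to an honest $1$--form. The sign ambiguity is resolved exactly as for distant directions: near each point the pair of almost antipodal distant directions singles out the two ends of the local neck, and along any path inside $\ol{M_{\eps_0}^{\neck}}$ one can propagate a consistent choice of sign as long as $\nu$ stays small; globally this gives a choice of orientation of the (not yet constructed) foliation on each connected component of $F$, after possibly replacing $\al$ by $-\al$ there. Having fixed signs, I take a locally finite refinement $\{U_i\}$ of $\{V_x\}$ with a subordinate partition of unity $\{\chi_i\}$ and set $\al:=\sum_i\chi_i\,\al_{x_i}$ on a neighborhood $F$ of $\ol{M_{\eps_0}^{\neck}}$. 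Because all the $\al_{x_i}$ agreeing on an overlap are $\theta(\nu)$--close with $\theta(0)=0$, the convex combination $\al$ is $\mathcal C^k$--close (with the same kind of bound, absorbing constants into $\theta$) to each $\al_{x_i}$ on $U_i$; in particular $\|\al\|\simeq\scal^{\half}$, i.e.\ $1-\theta\circ\nu\le\scal^{-\half}\|\al\|\le1+\theta\circ\nu$ after enlarging $\theta$ and shrinking $\eps_0$ if necessary. To get \emph{closedness} of $\al$, I would note that each $\al_{x_i}=dh_{x_i}$ is exact, so on an overlap $\al_{x_i}-\al_{x_j}=d f_{ij}$ for a function $f_{ij}$ that is $\mathcal C^{k+1}$--small; then $d\al=\sum_i d\chi_i\wedge\al_{x_i}=\sum_i d\chi_i\wedge(\al_{x_i}-\al_{x_0(i)})$ on each overlap region (using $\sum d\chi_i=0$) is $\mathcal C^{k-1}$--small, and a standard correction—replace $\al$ by $\al-d g$ where $g$ solves, locally and then patched, the equation making the error vanish, or more simply use that the plane field $\ker\al$ can be integrated after a small perturbation because it is $\theta(\nu)$--close to the integrable field $\ker dh_{x_i}$—yields a genuine closed $1$--form, still satisfying all the estimates. (Alternatively one can from the start define $\al$ by averaging the \emph{projections} to $T^*M/\{\pm1\}$ and then invoke that the local model foliations glue, taking $\al$ to be the pullback of $dt$ under the resulting almost-Riemannian submersion to a $1$--manifold; I would present whichever is cleanest.)

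Then I would verify the three bulleted properties. The norm estimate is immediate from the previous paragraph. For the second bullet, $\ker\al$ is a smooth plane field which on each $U_i$ is $\theta(\nu)$--close to $\ker dh_{x_i}=\ker d\pi$, whose leaves are the cross-sectional $2$--spheres $S^2(\sqrt2)\times\{t\}$; since $\al$ is closed, $\ker\al$ is integrable, and the leaves are compact (being $\mathcal C^0$--close, on the scale $\tfrac{1}{2\nu}$, to round $2$--spheres and having no boundary inside $F$ once $F$ is taken to contain a definite neighborhood of $\ol{M_{\eps_0}^{\neck}}$), hence $2$--spheres; this uses the degenerate case already noted, that if $\nu\equiv0$ on a component then it is literally the round cylinder and $\al=$ const $\cdot dt$. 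For the third bullet one reads off, on $\tildeB(x,\tfrac{1}{2\nu(x)})\cap F$, that every point is covered by necks $V_{x_i}$ with $\nu\le\nu(x)\cdot(1+o(1))$, so all the relevant $h_{x_i}$-foliations—and hence their convex-combination $\mathcal F$—are $\theta(\nu(x))$--close in $\mathcal C^{[1/\theta(\nu(x))]+1}$ to the standard cylinder foliation, after rescaling by $\scal(x)$; this is just the quantitative form of ``$\eps$--neck charts are $\eps$--homotheties'' together with the overlap estimates, with $\theta$ chosen at the end to dominate all the accumulated constants and to be continuous at $0$ with $\theta(0)=0$.

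The main obstacle I expect is \textbf{the sign-coherence and the exact closedness of $\al$ simultaneously with the $\mathcal C^{k}$--estimates}: one must choose the patching scale $\eps(x)$ carefully (comparable to but strictly larger than $\nu(x)$, with controlled variation, e.g.\ $\eps(x)=2\nu(x)$ capped at $\eps_0$) so that overlaps are genuinely in the ``good'' part of necks where the cylinder-rigidity estimates apply, and then show that the harmless-looking averaging does not destroy closedness beyond what a small exact correction can fix, all while keeping every estimate of the form $(\text{error})\le\theta(\nu)$ with a single monotone $\theta$. Everything else—the existence of a suitable locally finite cover, the partition of unity, the integrability once $d\al=0$, the spherical topology of the leaves—is routine given the local model input from section~\ref{sec:locmod} and the neck-overlap discussion preceding the Lemma.
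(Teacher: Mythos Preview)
Your plan is essentially the paper's: cover the neck region by central portions of near-optimal necks (with $\eps(x)$ just above $\nu(x)$), average the local exact forms $dh_x$ via a partition of unity, and read off the foliation from $\ker\al$. The paper uses both central halves $V_x$ and central thirds $W_x$, with the partition of unity subordinate to the $V_x$ together with $\Int(M\setminus\bigcup_xW_x)$ and the resulting form supported on $\bigcup_xW_x$; this is a minor refinement of your cover.

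There is, however, one genuine gap. You correctly observe that the naive average $\al=\sum_i\chi_i\,dh_{x_i}$ has $d\al=\sum_i d\chi_i\wedge(\al_{x_i}-\al_{x_0})$ small but in general nonzero, and that this needs repair. But your proposed repair ``replace $\al$ by $\al-dg$'' cannot work: $d(dg)=0$, so subtracting an exact form leaves $d\al$ unchanged. Likewise, the fallback ``$\ker\al$ is close to the integrable field $\ker dh_{x_i}$, hence becomes integrable after a small perturbation'' is not a valid principle---proximity to an integrable plane field does not by itself yield integrability. A correct fix is to produce a \emph{small $1$--form} $\beta$ (not an exact one) with $d\beta=d\al$; this is possible with estimates because $d\al$ is a small closed $2$--form on a region carrying uniform cylindrical charts, and then $\al-\beta$ is closed and still satisfies all the norm bounds. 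The paper is terse here and simply asserts closedness of the interpolated form; its remark that each leaf is ``a level set of a local primitive $f$ of $\al$, and $f$ is close to $\pm h_x+\mathrm{const}$'' suggests the authors have in mind interpolating the \emph{primitives} $h_x$ (after normalizing additive constants on overlaps) rather than the $1$--forms, which produces a locally exact and hence closed $\al$ directly.

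One further difference worth noting: you resolve the $\pm$ sign \emph{before} averaging, by propagating a choice of distant direction along paths, which forces you to confront possible monodromy head-on. The paper sidesteps this by first constructing the foliation from the section $\pm\al$ of $T^*M/\{\pm1\}$ (the sign is irrelevant for $\ker\al=\ker(-\al)$), and only afterward lifting to an honest $1$--form using that the leaf space is a $1$--manifold and hence orientable. That order is cleaner.
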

\begin{proof}
We exclude the trivial situation when $\nu$ attains the value zero
and assume that $\nu>0$ everywhere on $M_{\eps_0}^{\neck}$. 

To (almost) optimize the quality of approximation,
we choose for each point $x\in M_{\eps_0}^{\neck}$
a constant $\eps(x)\in(0,\eps_0)$
with $\eps(x)<\frac{101}{100}\nu(x)$
and realize $x$ as the center of an $\eps(x)$--neck ${\mathcal N}_x$
with neck chart $\phi_x$, cf.\ (\ref{eq:neck}). 
For all these necks 
we consider their central halves
$V_x=\phi_x(S^2(\sqrt{2})\times(-\frac{1}{2\eps(x)},\frac{1}{2\eps(x)}))$ 
and thirds 
$W_x=\phi_x(S^2(\sqrt{2})\times(-\frac{1}{3\eps(x)},\frac{1}{3\eps(x)}))$ 
and the exact 1--forms $\al_x=dh_x$ on $V_x$. 
Using a partition of unity on $M$ 
subordinate to the open covering by the $V_x$ 
and the interior of $M-\cup_xW_x$, 
we interpolate the forms $\pm\al_x$ 
to obtain a closed 1--form $\pm\al$ on $\cup_xW_x$ 
(locally) well-defined up to sign,
that is, a section of $T^*M/\{\pm1\}$.
(More precisely, for every point $y\in\cup_xW_x$ 
we choose signs $\eps_{x,y}\in\{\pm1\}$
such that $\eps_{x,y}\al_x\simeq\eps_{x',y}\al_{x'}$
near $y$ if $y\in V_x\cap V_{x'}$,
and then interpolate the forms $\eps_{x,y}\al_x$ near $y$.)

The plane field $\ker(\pm\al)$ on $\cup_xW_x$ is integrable 
and hence tangent to a 2--dimensional foliation ${\mathcal F}$. 
For any $y\in W_x$ the leaf ${\mathcal F}_y$ through $y$ 
is a 2--sphere close to the 2--sphere $h_x^{-1}(t)$ through $y$ 
(because it is a level set of a local primitive $f$ of $\al$, 
and $f$ is close to $\pm h_x+const$).
The approximation and the geometric properties of the leaves improve
as $\nu$ decreases. 
We take $F$ to be a saturated open subset of $\cup_xW_x$
which contains all leaves of ${\mathcal F}$ 
meeting the closure of $M_{\eps_0}^{\neck}$. 
We can also arrange that $\D F$ is a disjoint union of embedded 2--spheres. 

The leaf space of ${\mathcal F}$ is a 1--manifold. 
Therefore we can globally choose a sign for $\pm\al$,
i.e.\ lift the section $\pm\al$ of $T^*M/\{\pm1\}$
to a section $\al$ of $T^*M$.
\end{proof}

Suppose now that in addition we are given 
an isometric action $\rho\co G\acts M$ of a finite group. 
Then the above construction can be done equivariantly.

\begin{lem}[Equivariant foliation]
\label{eqfolneck} 
The set $F$ and its foliation ${\mathcal F}$
obtained in Lemma~\ref{folneck} can be chosen
$\rho$--invariant. 
\end{lem}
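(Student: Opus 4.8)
The plan is to redo the construction from the proof of Lemma~\ref{folneck} in a $\rho$--equivariant manner, exploiting the fact that all choices made there were essentially local and can be averaged or made canonical under the finite group $G$. First I would observe that the necklikeness function $\nu$ is already $\rho$--invariant, since $\rho$ acts by isometries and the condition of being the center of an $\eps$--neck is isometry-invariant; similarly the set $M_{\eps_0}^{\neck}$ is $\rho$--invariant. So the only non-canonical ingredients are: the choice of scales $\eps(x)$, the choice of neck charts $\phi_x$ (equivalently the local 1--forms $\al_x=dh_x$ defined up to sign), the partition of unity used to interpolate, and the choice of the saturated open set $F$. Each of these can be arranged equivariantly.

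For the scales, I would simply require $\eps(\cdot)$ to be a $\rho$--invariant function of $x$ (e.g.\ take $\eps(x)$ depending only on $\nu(x)$, which is already invariant), so that $V_x, W_x$ and the necks ${\mathcal N}_x$ transform correctly: $\rho(\ga){\mathcal N}_x={\mathcal N}_{\ga x}$ after replacing $\phi_x$ by $\rho(\ga)\circ\phi_x$ precomposed with a rotation of the round cylinder. Crucially, the local data we actually interpolate are not the charts themselves but the 1--forms $\pm\al_x$, viewed as sections of $T^*M/\{\pm1\}$, and these \emph{are} canonical: $\rho(\ga)^*(\pm\al_{\ga x})=\pm\al_x$ near $x$, independent of the rotational ambiguity in the chart. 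Hence the family $\{\pm\al_x\}$ is $G$--equivariant. Next, instead of an arbitrary partition of unity subordinate to $\{V_x\}\cup\mathrm{int}(M-\cup_x W_x)$, I would choose a $\rho$--invariant one by averaging an arbitrary partition of unity over $G$ (the covering is $G$--invariant by the previous step). Interpolating the equivariant family $\{\pm\al_x\}$ with a $\rho$--invariant partition of unity produces a $\rho$--invariant section $\pm\al$ of $T^*M/\{\pm1\}$ on $\cup_x W_x$, hence a $\rho$--invariant plane field $\ker(\pm\al)$ and a $\rho$--invariant foliation ${\mathcal F}$.

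Finally, for the domain: one takes $F$ to be the union of all leaves of ${\mathcal F}$ meeting $\ol{M_{\eps_0}^{\neck}}$, shrunk slightly so that $\D F$ is a disjoint union of embedded 2--spheres. Since $\ol{M_{\eps_0}^{\neck}}$, the foliation ${\mathcal F}$, and hence the leaf space are all $\rho$--invariant, the minimal such saturated open set is automatically $\rho$--invariant; the smoothing of the boundary into spheres can be done equivariantly using the parametrization of a neighborhood of $\D F$ by leaves (the leaf-space coordinate is $G$--invariant, so a $G$--invariant cutoff in that coordinate suffices). Likewise, the global lift of $\pm\al$ to an honest 1--form $\al$ is possible because the leaf space is a $\rho$--invariant $1$--manifold with an induced $G$--action, and one can choose the orientation/sign consistently on each $G$--orbit of components (picking it arbitrarily on one component per orbit and propagating by the group action; the averaging is harmless since signs take values in $\{\pm1\}$ and the two choices on a component related to itself by an orientation-reversing element of $G$ just give the two lifts of that component's contribution, which is fine as we only need \emph{a} lift on the saturated invariant domain). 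The main obstacle to watch is precisely this last point—whether a globally defined sign exists equivariantly—but since we only need $\al$ as an auxiliary object on the $\rho$--invariant set $F$ and the sign ambiguity is already resolved leaf-component-wise in the non-equivariant proof, averaging the \emph{partition of unity} (not the signs) and then choosing signs per $G$--orbit of leaf-components resolves it. All the quantitative estimates ($\|\al\|\simeq\scal^{\half}$ and the ${\mathcal C}^k$--closeness to the round cylinder) are unchanged, since they are local and the local models and charts are the same as before.
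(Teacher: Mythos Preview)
Your proposal is correct and follows essentially the same approach as the paper: make the family of neck charts and the partition of unity $G$--invariant, so that the resulting section $\pm\al$ of $T^*M/\{\pm1\}$ (and hence $\ker\al$ and ${\mathcal F}$) is $G$--invariant. The paper's proof is just these two sentences; your version simply unpacks the details.

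One remark: your final paragraph about equivariantly lifting $\pm\al$ to an honest $1$--form $\al$ is unnecessary and slightly misleading. The lemma only asserts $\rho$--invariance of $F$ and ${\mathcal F}$, and ${\mathcal F}$ is already determined by the $G$--invariant plane field $\ker(\pm\al)$. A $G$--invariant lift $\al$ need not exist (an element of $G$ reversing the leaf-space orientation on a component sends $\al$ to $-\al$), and the paper does not claim one; its proof stops at the invariance of $\pm\al$. Since $\|\al\|$ and the ${\mathcal C}^k$--closeness estimates are sign-independent, nothing is lost.
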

\begin{proof} 
The family of embeddings $\phi$ and the partition of unity can
be chosen $G$--invariantly.  Then the resulting section $\pm\al$ of
$T^*M/\{\pm1\}$ is also $G$--invariant.  
\end{proof}

Given a subgroup $H\leq G$, 
we say that an $\eps$--neck ${\mathcal N}\subset M$, 
$0<\eps\leq\eps_0$, 
is {\em $H$--equivariant} or an {\em $(H,\eps)$--neck}, 
if it is $\rho(H)$--invariant as a subset 
and if the neck chart (\ref{eq:neck}) 
can be chosen such that 
the pulled-back action $\phi^*(\rho|_H)$ is {\em isometric}
(with respect to the cylinder metric). 

For $0<\eps\leq\eps_0$, 
we denote by 
$M_{H,\eps}^{\neck}$ 
the subset of centers of $(H,\eps)$--necks. 
It is contained in the union ${\mathcal F}_H$ 
of $H$--invariant leaves of the equivariant foliation ${\mathcal F}$
given by Lemma~\ref{eqfolneck}, 
$M_{H,\eps}^{\neck}\subseteq {\mathcal F}_H\cap M_{\eps}^{\neck}$;  
it is a union of ${\mathcal F}$--leaves 
and open in ${\mathcal F}_H$.
We define the {\em equivariant necklikeness} 
$\nu_H\co M_{H,\eps_0}^{\neck}\to[0,\eps_0)$ analogously
by $\nu_H(x):=\inf\{\eps>0:x\in M_{H,\eps}^{\neck}\}$. 

The next observation says 
that we can replace necks by equivariant ones. 
This becomes relevant 
when one wants to perform surgery on the Ricci flow equivariantly,
cf.\ section~\ref{sec:ex}. 

\begin{lem}[Equivariant necks]
\label{equivneck}
There exists a constant $\eps_0^G\in(0,\eps_0]$ 
and a monotonically increasing function $f\co [0,\eps_0^G]\to[0,\eps_0]$, 
continuous in $0$ with $f(0)=0$, 
such that for any subgroup $H\leq G$ holds 
$\nu_H\leq f\circ\nu$ on 
$M_{\eps_0^G}^{\neck}\cap{\mathcal F}_H\subseteq M_{H,\eps_0}^{\neck}$. 
\end{lem}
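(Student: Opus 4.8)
The plan is to argue by contradiction, combining the compactness of pointed round cylinders with local rigidity of finite group actions. Since $G$ is finite it has only finitely many subgroups, so it suffices to treat one fixed $H\leq G$ and then set $\eps_0^G:=\min_H\eps_0^H$, $f:=\max_H f_H$. Negating the assertion for $H$, one finds $\delta_0>0$ and points $x_n\in M^{\neck}_{\eps_0}\cap\mathcal F_H$ with $\nu(x_n)\to0$ but $\nu_H(x_n)\geq\delta_0$: thus $x_n$ is a center of an $\eps_n$--neck $\mathcal N_n$ with $\eps_n\to0$, the leaf $\Sigma_n:=\mathcal F_{x_n}$ is $\rho(H)$--invariant, yet $x_n$ is the center of no $(H,\delta_0)$--neck.

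First rescale so that $\scal(x_n)=1$ and pick neck charts $\phi_n$ for $\mathcal N_n$ with $\phi_n^{-1}(x_n)\in S^2(\sqrt2)\times\{0\}$. By Definition~\ref{def:neck} the pulled-back metrics $\phi_n^*g$ converge in $\mathcal C^\infty$ on compact subsets of $S^2(\sqrt2)\times\R$ to the round cylinder metric $g_{\mathrm{cyl}}$ of scalar curvature~$1$. Since $\rho$ is isometric, the partially defined diffeomorphisms $\phi_n^{-1}\circ\rho(h)\circ\phi_n$ have all derivatives uniformly bounded (being restrictions of isometries between pieces of uniformly bounded geometry), so after passing to a subsequence (diagonalising over the finitely many $h\in H$) they converge uniformly on compact sets to smooth maps $\rho_\infty(h)$. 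Passing the relation $\rho(h)^*g=g$ and the group law to the limit shows that $\rho_\infty\co H\acts S^2(\sqrt2)\times\R$ is an isometric action; moreover $\Sigma_n\to S^2(\sqrt2)\times\{0\}$ by Lemma~\ref{folneck}, and $\rho(H)\Sigma_n=\Sigma_n$, so $\rho_\infty(H)$ preserves the central sphere. As every isometry of the round cylinder respects its de Rham splitting, this forces $\rho_\infty$ to be a product of an orthogonal action on $S^2(\sqrt2)$ and a $\Z_2$--action on the line factor.

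Now fix $\eps'':=\delta_0/2$. For $n$ large there are diffeomorphisms $\Psi_n$ of $Z:=S^2(\sqrt2)\times(-\tfrac{2}{\eps''},\tfrac{2}{\eps''})$ into $\mathcal N_n$ with $\Psi_n^*g\to g_{\mathrm{cyl}}$ and $\sigma_n:=\Psi_n^{-1}\rho(\cdot)\Psi_n\to\rho_\infty$ in $\mathcal C^k$ for some large $k$. The hard step is to \emph{straighten} $\sigma_n$: to conjugate this action, which is $\mathcal C^k$--close to the \emph{isometric} model $\rho_\infty$, to $\rho_\infty$ itself by a diffeomorphism $\beta_n$ of the shorter piece $S^2(\sqrt2)\times(-\tfrac{1}{\eps''},\tfrac{1}{\eps''})$ that is $\mathcal C^k$--close to the inclusion. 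I expect to do this via local rigidity of finite group actions near the $\rho_\infty$--invariant sphere $S^2(\sqrt2)\times\{0\}$ --- an equivariant tubular-neighbourhood and metric-averaging argument --- invoking Proposition~\ref{equivdiffeo2sph} for the orthogonal factor on $S^2(\sqrt2)$ and using the product structure coming from $\mathcal F$ (Lemma~\ref{eqfolneck}) to organise the line direction. Granting this, $\phi_n':=\Psi_n\circ\beta_n$ is a neck chart exhibiting $x_n$ as the center of an $\eps''$--neck with $(\phi_n')^*(\rho|_H)=\rho_\infty|_H$ isometric; hence $\nu_H(x_n)\leq\eps''<\delta_0$, a contradiction.

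This produces a function $f_H\co[0,\eps_0]\to[0,\eps_0]$ with $\nu_H\leq f_H\circ\nu$ on $M^{\neck}_{\eps_0}\cap\mathcal F_H$ and $f_H(\eps)\to0$ as $\eps\to0$; replacing $f_H$ by $\eps\mapsto\sup_{t\leq\eps}f_H(t)$ makes it monotone increasing and keeps continuity at~$0$, and shrinking $\eps_0^H$ so that $f_H(\eps_0^H)\leq\eps_0$ yields the inclusion $M^{\neck}_{\eps_0^G}\cap\mathcal F_H\subseteq M^{\neck}_{H,\eps_0}$. The one genuinely technical point is the straightening step; everything else is routine compactness and bookkeeping.
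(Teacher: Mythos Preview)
Your argument is correct but takes a different route from the paper's. You proceed by contradiction and compactness: pull back through non-equivariant neck charts, extract a limit isometric $H$--action $\rho_\infty$ on the cylinder, then straighten the nearby actions $\sigma_n$ to $\rho_\infty$ by a Grove--Karcher/Palais center-of-mass conjugation (this straightening, which you rightly flag as the technical point, is carried out verbatim later in the paper in the proof of Lemma~\ref{equivarcaps}, so you could simply cite that). The paper instead gives a direct construction that avoids straightening altogether: the equivariant foliation $\mathcal F$ of Lemma~\ref{eqfolneck} comes with a $\rho$--\emph{invariant} normal line field, so one only needs an equivariant near-isometry $\phi_0\co S^2(\sqrt2)\to\mathcal F_x$ on the single $H$--invariant leaf (which exists because an $H$--invariant nearly round metric on $S^2$ is close to an $H$--invariant round one, with closeness controlled by $\nu(x)$ and $|G|$), and then extends $\phi_0$ along the integral curves of the normal line field; equivariance of the extension is automatic from invariance of the line field, and the near-isometry property follows from the third bullet of Lemma~\ref{folneck}. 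The paper's approach is shorter and uses only the structure already built in Lemmas~\ref{folneck}--\ref{eqfolneck}; yours is more robust in that it does not rely on having an invariant transverse line field, but it imports the action-rigidity machinery and your invocation of Proposition~\ref{equivdiffeo2sph} is not quite the right tool --- what you actually need there is the center-of-mass averaging of \cite{GroveKarcher}.
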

\begin{proof}
Let $x\in M_{\eps_0}^{\neck}\cap{\mathcal F}_H$ 
and let us normalize so that $\scal(x)=1$. 
As before, we denote by ${\mathcal F}_x$ the leaf of ${\mathcal F}$ through $x$. 
If $\nu(x)$ is small, 
the metric is on a ball of radius $\simeq\frac{1}{2\nu(x)}$ around $x$ 
very close (in a topology of large smoothness degree)
to the standard round cylinder.  
Furthermore,
the foliation ${\mathcal F}$ is on this ball very close to the foliation 
of the standard round cylinder by totally-geodesic cross-sectional 2--spheres.
It follows that 
the $\rho(H)$--invariant metric $g|_{{\mathcal F}_x}$
is close, in terms of $\nu(x)$ and $|G|$,
to a $\rho(H)$--invariant round metric 
with scalar curvature $\equiv1$. 
Let $\phi_0\co S^2(\sqrt{2})\to{\mathcal F}_x$ be 
a corresponding almost isometric diffeomorphism 
which is $(\hat\rho_0,\rho|_H)$--equivariant 
with respect to a suitable isometric action 
$\hat\rho_0\co H\acts S^2(\sqrt{2})$. 
Using the $\rho$--invariant line field perpendicular to ${\mathcal F}$
and its integral lines,
we can extend $\phi_0$ to a 
$(\hat\rho,\rho|_H)$--equivariant embedding 
$\phi\co S^2(\sqrt{2})\times(-\frac{1}{2\nu(x)},\frac{1}{2\nu(x)})\embed M$,
$\phi|_{S^2(\sqrt{2})\times\{0\}}=\phi_0$, 
where $\hat\rho$ is a suitable extension of $\hat\rho_0$
to an isometric action of $H$ on 
$S^2(\sqrt{2})\times(-\frac{1}{2\nu(x)},\frac{1}{2\nu(x)})$. 
Given $\eps>0$, 
the restriction of $\phi$ to 
$S^2(\sqrt{2})\times(-\frac{1}{\eps},\frac{1}{\eps})$
is arbitrarily close to an isometry
provided that $\nu(x)$ is sufficiently small. 
\end{proof}

\subsection{Neck--cap geometry}
\label{sec:cap}

Let $(M^3,g)$ be a connected orientable closed Riemannian 3--manifold. 

Let $\eps>>\eps_1>0$. 
Let $A_0(\eps_1)\subseteq M$ be the open subset of points $x$
such that $(M,x,g)$ is $\eps_1$--approximated 
by a $\kappa$-- or a standard solution $(N,x',h)$. 
Let $A_1(\eps,\eps_1):=A_0(\eps_1)\cap\{\rrad(\cdot,M)>D(\eps)\}$,
with $D(\eps)$ to be specified in Proposition \ref{caps} below.
Note that for $x\in A_1$, a $\kappa$--solution $\eps_1$--approximating 
$(M,x,g)$ is a $\kappa_0$--solution.

Suppose that $x\in A_0$.
If $\eps_1$ is sufficiently small (in terms of $\eps$), 
then centers of $\frac{\eps}{2}$--necks in $N$
correspond via the approximation 
to centers of $\eps$--necks in $M$. 
In particular, 
if $x'\in N_{\eps/2}^{\neck}$ then $x\in M_{\eps}^{\neck}$,
respectively, 
if $x\in M_{\eps}^{\nn}$ then $x'\in N_{\eps/2}^{\nn}$.  

The neck--cap alternative carries over from the local models 
to regions which are well approximated by them. 
We obtain from Proposition~\ref{capsinmodel}: 
\begin{prop}[Caps]
\label{caps}
For any sufficiently small $\eps>0$ 
there exist constants 
$D(\eps)>>\bar d(\eps)>d(\eps)>0$
and $0<\epsseven(\eps)\leq\frac{1}{2D}$ such that the following hold:

(i) If $0<\eps_1\leq\epsseven$
and $x\in M_{\eps}^{\nn}\cap A_1$, 
then there exists an $(\eps,d)$--cap $C$ centered at $x$.
It satisfies $\rrad(x,C)<\bar d$. 
Moreover, 
if ${\mathcal N}\subset C$ is an $\eps$--neck 
representing the end of $C$
as in definition~\ref{def:cap},
then ${\mathcal N}\subset M_{\eps}^{\neck}$. 

(ii) If $\hat C\subset M$ is another $(\eps,d)$--cap 
centered at a point 
$\hat x\in (M_{\eps}^{\nn}\cap A_1)-C$, 
then $\hat C\cap C=\emptyset$. 
\end{prop}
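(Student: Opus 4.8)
The plan is to transfer the neck--cap picture from the approximating $\kappa_0$-- or standard solution to $M$ via the $\eps_1$--homothety, tracking the (controlled) loss of approximation quality. Fix $\eps$ small enough that Proposition~\ref{capsinmodel} applies with parameter $\eps/2$, producing $D'=D'(\eps/2)>d'(\eps/2)>0$ and, from Addendum~\ref{add:bounds}, $\bar d'(\eps/2),c_1'(\eps/2),c_2'(\eps/2)>0$. I would then choose, in this order: a small $\delta>0$ together with $d:=(1+\delta)d'$ and $\bar d:=(1+2\delta)\bar d'$; then $D=D(\eps)$ large enough that $\rrad(\cdot,M)>D$ forces the approximating $\kappa$--solution to be a $\kappa_0$--solution and leaves room, inside the $\eps_1$--approximated ball, for a model cap (of relative radius $<\bar d'$ around its center) plus a short outward extension of its end--neck; and finally $\epsseven=\epsseven(\eps)\le\tfrac{1}{2D}$ so small that any $\eps_1$--homothety with $0<\eps_1\le\epsseven$ carries $\tfrac{\eps}{2}$--necks to $\eps$--necks, distorts relative distances by a factor in $(1-\delta,1+\delta)$, and weakens the pinching of Addendum~\ref{add:bounds} only slightly.

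\emph{Proof of (i).} Given $x\in M_{\eps}^{\nn}\cap A_1$, let $\phi\co(\tildeB_{1/\eps_1}^{M}(x),\scal(x)g)\to(V',h)$ be the $\eps_1$--homothety onto an open subset $V'$ of a time slice $N$ of a $\kappa$-- or standard solution, with $\phi(x)=x'$. By the remark preceding the Proposition, $N$ is (a time slice of) a $\kappa_0$-- or standard solution, and $x'\in N_{\eps/2}^{\nn}$ since $x\in M_{\eps}^{\nn}$. As $\tildeB_{1/\eps_1}^{M}(x)$ has relative radius $\min\{1/\eps_1,\rrad(x,M)\}>D$ around $x$ and $\phi$ is almost a homothety, $\rrad(x',N)\ge\rrad(x',V')>D'$. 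Proposition~\ref{capsinmodel}(i) then yields an $(\eps/2,d')$--cap $C'\subset N$ centered at $x'$ whose end--neck $\mathcal N'$ lies in $N_{\eps/2}^{\neck}$, and Addendum~\ref{add:bounds} gives $\rrad(x',C')<\bar d'$ and $c_1'\scal(x')\le\scal\le c_2'\scal(x')$ on $C'$; by the choice of $D$ this whole configuration lies inside $V'$. Put $C:=\phi^{-1}(C')$ and $\mathcal N:=\phi^{-1}(\mathcal N')$. Since $\phi^{-1}$ is an $\eps_1$--homothety, $\mathcal N$ contains an $\eps$--neck representing the end of $C$, $C$ is still diffeomorphic to $B^3$ or $\R P^3-\bar B^3$, and all relative distances change by at most the factor $1\pm\delta$; sliding the official end--neck of $C$ slightly outward so that its center sits at relative distance exactly $d$ from $x$ (possible as in the remark following Definition~\ref{def:cap}), we obtain an $(\eps,d)$--cap centered at $x$ with $\rrad(x,C)<\bar d$. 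Finally, the centers of $\mathcal N'$ lie in $N_{\eps/2}^{\neck}$, so --- since everything here sits at relative distance $\ll1/\eps_1$ from $x$, where the correspondence of neck centers recalled before the Proposition applies --- the centers of $\mathcal N$ lie in $M_{\eps}^{\neck}$; thus $\mathcal N\subset M_{\eps}^{\neck}$.

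\emph{Proof of (ii).} Let $\hat C$ be another $(\eps,d)$--cap, centered at $\hat x\in(M_{\eps}^{\nn}\cap A_1)-C$. By part (i), $\hat x$ is the center of an $(\eps,d)$--cap of relative radius $<\bar d$ on which $\scal$ is comparable to $\scal(\hat x)$; since the non-neck core of any $(\eps,d)$--cap at $\hat x$ lies inside this one, with the end--neck then running out through the necklike region, it suffices to treat $\hat C$ with these properties. Suppose, for contradiction, $y\in\hat C\cap C$. Then $\tilde d(x,y)<\bar d$ and $\scal(y)$ is comparable to both $\scal(x)$ and $\scal(\hat x)$, so $\hat x$ lies at bounded relative distance from $x$; in particular $\hat x\in\tildeB_{1/\eps_1}^{M}(x)$, $\hat x':=\phi(\hat x)\in N$ is defined, $\hat x'\in N_{\eps/2}^{\nn}$, $\hat x'\notin C'=\phi(C)$ because $\hat x\notin C$, and $\rrad(\hat x',N)>D'$ as before. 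Proposition~\ref{capsinmodel}(i) gives an $(\eps/2,d')$--cap $\hat C'\subset N$ centered at $\hat x'$, and then Proposition~\ref{capsinmodel}(ii) forces $\hat C'\cap C'=\emptyset$, forces $N$ to be a time slice of a compact $\kappa_0$--solution, and gives $N_{\eps/2}^{\nn}\subseteq C'\cup\hat C'$. Pulling back, $\hat C_0:=\phi^{-1}(\hat C')$ is disjoint from $C$ and $M_{\eps}^{\nn}\cap\tildeB_{1/\eps_1}^{M}(x)\subseteq C\cup\hat C_0$. But the non-neck core of $\hat C$ is connected, $\eps$--non-necklike, at bounded relative distance from $\hat x$, hence lies in $\hat C_0$ (not in $C$), and the end--neck of $\hat C$ continues from there into the necklike tube of $N$, away from $C'$; so $\hat C\cap C=\emptyset$, contradicting the choice of $y$.

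The hard part is this second half of (ii): making the comparison work for an arbitrary $(\eps,d)$--cap $\hat C$ rather than just for the cap extracted from the model, i.e.\ controlling how $\hat C$ sits relative to $\hat C_0$ and to $C$ (which is where the reduction via part (i) and the scalar curvature comparison on caps enter). The rest is bookkeeping --- fixing $d,\bar d,D,\epsseven$ in the correct order so that the single use of the $\eps_1$--homothety degrades the model estimates ($\tfrac{\eps}{2}$--necks to $\eps$--necks, $d'$ to $d$, $\bar d'$ to $\bar d$, and the pinching of Addendum~\ref{add:bounds}) by an amount that keeps the conclusions of Proposition~\ref{capsinmodel} usable, and so that every model cap together with its end--neck still fits inside the approximated ball $\tildeB_{1/\eps_1}^{M}(x)$.
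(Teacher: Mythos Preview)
Your proposal is correct and follows exactly the route the paper indicates: the paper does not give a detailed proof of this proposition at all, but simply states ``We obtain from Proposition~\ref{capsinmodel}'' and leaves the transfer via the $\eps_1$--approximation implicit. Your argument is a careful (and somewhat more explicit than necessary) unpacking of precisely this transfer --- applying Proposition~\ref{capsinmodel} with parameter $\eps/2$ in the model, pulling back through the $\eps_1$--homothety, and tracking the degradation of constants --- so there is no substantive difference in approach.
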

We will refer to an $(\eps,d(\eps))$--cap in $M$ 
simply as an {\em $\eps$--cap}.

\begin{cor}
\label{cor:capsintersect}
If $C_1,C_2$ are $\eps$--caps centered at $x_1,x_2\in M_{\eps}^{\nn}\cap A_1$, 
then: 

$C_1\cap C_2\neq\emptyset\Leftrightarrow 
C_1\cap M_{\eps}^{\nn}\cap A_1=C_2\cap M_{\eps}^{\nn}\cap A_1$
\end{cor}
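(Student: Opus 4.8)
The plan is to deduce the stated equivalence from Proposition~\ref{caps}, exploiting that an $\eps$--cap $C$ centered at $x$ contains, aside from its necklike end, only non-necklike points that are accounted for by $x$'s own cap. The key structural facts I would use are: (a) by Proposition~\ref{caps}(i), the necklike end ${\mathcal N}\subset C$ satisfies ${\mathcal N}\subset M_{\eps}^{\neck}$, so $C\cap M_{\eps}^{\nn}\subseteq C-{\mathcal N}$; and (b) by Proposition~\ref{caps}(ii), two $\eps$--caps centered at distinct points of $M_{\eps}^{\nn}\cap A_1$ are either disjoint or have the same center (more precisely, if their centers are distinct they are disjoint). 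The subtle point, and where I expect the main work to lie, is controlling $C\cap M_{\eps}^{\nn}\cap A_1$, i.e.\ showing it equals precisely the set of centers of $\eps$--caps that "belong to'' $C$.

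First I would prove the implication ``$\Rightarrow$''. Suppose $C_1\cap C_2\neq\emptyset$. If $x_1=x_2$ the conclusion is trivial since $C_1$ and $C_2$ are then centered at the same point and (by the remark after Definition~\ref{def:cap} and the uniform geometry in Proposition~\ref{caps}(i)) differ only by extending their necklike ends, so they have the same non-necklike part; in particular $C_1\cap M_\eps^{\nn}\cap A_1=C_2\cap M_\eps^{\nn}\cap A_1$. If $x_1\neq x_2$, then by Proposition~\ref{caps}(ii) applied with $\hat x = x_2$ (using $x_2\in(M_\eps^{\nn}\cap A_1)$; the case $x_2\in C_1$ versus $x_2\notin C_1$ must be separated, but if $x_2\in C_1$ then since $x_2\notin{\mathcal N}_1$ we would have $x_2\in C_1-{\mathcal N}_1\subset\tildeB(x_1,d)$, and one checks this forces $C_2\subseteq C_1$ by the relative-diameter bounds, hence again equal non-necklike parts) we reach a contradiction with $C_1\cap C_2\neq\emptyset$ unless the non-necklike parts coincide. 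The cleanest route is: show that if $C_1\cap C_2\neq\emptyset$ then neither center lies outside the other cap, so both are ``the same cap up to neck extension'', giving $C_1\cap M_\eps^{\nn}\cap A_1 = C_2\cap M_\eps^{\nn}\cap A_1$.

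For the converse ``$\Leftarrow$'', assume $C_1\cap M_\eps^{\nn}\cap A_1 = C_2\cap M_\eps^{\nn}\cap A_1$. This common set is nonempty since it contains $x_1$ (note $x_1\in M_\eps^{\nn}\cap A_1$ by hypothesis, and $x_1\in C_1$ as its center with $x_1\notin{\mathcal N}_1$). Hence $x_1\in C_2\cap M_\eps^{\nn}\cap A_1$, so in particular $x_1\in C_2$, whence $C_1\cap C_2\ni x_1\neq\emptyset$. This direction is essentially immediate once one records that the center of an $\eps$--cap lies in $M_\eps^{\nn}\cap A_1$ and in the cap itself; the only thing to be careful about is that the center is genuinely non-necklike, which is built into Definition~\ref{def:cap} together with $x\not\in{\mathcal N}$ and the fact that $C-{\mathcal N}\subset\tildeB(x,d)$ with $x$ non-necklike by the hypothesis $x\in M_\eps^{\nn}$.

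The main obstacle, as noted, is the bookkeeping in the ``$\Rightarrow$'' direction: two $\eps$--caps that overlap need not be literally equal (they can differ by how far their necklike collars are extended), so I cannot simply invoke disjointness from Proposition~\ref{caps}(ii) and must instead argue that overlap forces the centers to lie in each other's cap, then use the relative-diameter estimates $\rrad(x_i,C_i)<\bar d$ together with $C_i-{\mathcal N}_i\subset\tildeB(x_i,d)$ to conclude that the non-necklike portions — which are what the corollary compares — agree. I would set this up by analyzing the three cases ($x_1,x_2$ both inside, one inside/one outside, both outside the other cap) and ruling out the genuinely disjoint-center configurations via Proposition~\ref{caps}(ii).
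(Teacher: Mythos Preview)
Your ``$\Leftarrow$'' direction is fine and matches the paper's one-line observation.

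For ``$\Rightarrow$'' you have correctly identified the first move: the contrapositive of Proposition~\ref{caps}(ii) gives $x_1\in C_2$ and $x_2\in C_1$ as soon as $C_1\cap C_2\neq\emptyset$. But from there your plan stalls. The phrase ``both are the same cap up to neck extension'' is not a precise statement, and the relative-diameter estimates you invoke do not suffice: knowing $x_1\in C_2$ and $y\in C_1-{\mathcal N}_1\subset\tildeB(x_1,d)$ does \emph{not} force $y\in C_2$, because the ball $\tildeB(x_1,d)$ need not sit inside $C_2$ (the caps could have their necklike ends pointing in different directions, and nothing in the diameter bounds prevents this). Your proposed three-case analysis does not close this gap either, since the interesting case is precisely when both centers already lie in both caps.

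The paper's proof supplies the missing idea, and it is short: for an arbitrary $x\in C_1\cap M_\eps^{\nn}\cap A_1$, invoke Proposition~\ref{caps}(i) to produce a \emph{third} $\eps$--cap $C$ centered at $x$. Since $x\in C\cap C_1$, the contrapositive of Proposition~\ref{caps}(ii) gives $x_1\in C$; then $x_1\in C\cap C_2$ gives $x\in C_2$. Two applications of the same disjointness criterion, bridged by the auxiliary cap $C$, yield $C_1\cap M_\eps^{\nn}\cap A_1\subseteq C_2\cap M_\eps^{\nn}\cap A_1$, and symmetry finishes. No diameter estimates are needed beyond what is already packaged in Proposition~\ref{caps}.
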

\begin{proof}
Direction ``$\Leftarrow$'' is trivial. 
We prove direction ``$\Rightarrow$''.
By part (ii) of the proposition, 
we have that $x_2\in C_1$ and $x_1\in C_2$. 
Let $x\in C_1\cap M_{\eps}^{\nn}\cap A_1$
and let $C$ be an $\eps$--cap centered at $x$. 
Then $x_1\in C$. 
Hence $x_1\in C\cap C_2\neq\emptyset$ and therefore also $x\in C_2$.
\end{proof}

Consequently,
for $\eps$--caps centered at points in $M_{\eps}^{\nn}\cap A_1$,
the relation defined by $C_1\sim' C_2$ 
iff $C_1\cap C_2\neq\emptyset$ 
is an equivalence relation. 
Equivalent caps differ only outside $M_{\eps}^{\nn}\cap A_1$, 
and inequivalent caps are disjoint. 
Furthermore, 
the relation on $M_{\eps}^{\nn}\cap A_1$ defined by $x_1\sim x_2$, 
iff there exists an $\eps$--cap $C$ containing $x_1$ and $x_2$,
is an equivalence relation. 
The equivalence class of a point $x$ is given by 
$C\cap M_{\eps}^{\nn}\cap A_1$
for any $\eps$--cap containing $x$. 

Note that there exists $\rho>0$ with the property that 
every $\eps$--cap $C$ centered at $x\in M_{\eps}^{\nn}\cap A_1$
contains $\tildeB(x,\rho)$. 
Consequently, since $M$ is closed, 
there can only be finitely many equivalence classes of $\eps$--caps.

Note that the situation when $A_1\subsetneq A_0$ is very special.
There exists $x\in A_0$ with $\rrad(x,M)\leq D$ 
and hence $M$ is {\em globally} $\eps_1$--approximated 
by a compact $\kappa$--solution,
i.e.\ by a compact $\kappa_0$--solution or a spherical space form. 
We choose the constant $\epsseven$ in Proposition~\ref{caps} 
sufficiently small, 
such that in addition one has 
a uniform positive lower bound for sectional curvature, 
$sec\geq c\scal(x)$ on $M$ with a constant $c(D(\eps))=c(\eps)>0$,
cf.\ the last assertion of Addendum~\ref{add:bounds}. 

\subsection{Equivariant tube--cap decomposition}
\label{sec:eqtc}

We now combine the discussions in section~\ref{sec:fol} 
and the previous section to describe, in the equivariant case, 
the global geometry of the region 
which is well approximated by the local models. 
This comprises the region of sufficiently large positive scalar curvature 
in the time slice of a Ricci flow. 

Let $(M^3,g)$ be again a connected orientable closed Riemannian 3--manifold
and let $\rho\co G\acts M$ be an isometric action by a finite group. 
In the following,
$\eps>0$ denotes a sufficiently small positive constant. 
It determines via Proposition~\ref{caps}
the even much smaller positive constant $\eps_1$. 
All $\eps$--caps are assumed 
to be centered at points in $M_{\eps}^{\nn}\cap A_1(\eps,\eps_1)$. 
Furthermore, 
we suppose that $A_1=A_0$, 
compare the remark in the end of section~\ref{sec:cap}.

Let $C\subset M$ be an $\eps$--cap. 
By the construction of caps, 
the end of $C$ is contained in $M_{\eps}^{\neck}$ 
and hence in the foliated region $F$, 
cf.\ section~\ref{sec:fol}. 
Let $T$ be the connected component of $F$
containing the end of $C$.
We will refer to $T$ as the {\em $\eps$--tube} associated to $C$. 
Of course, $C\not\subset F$, e.g.\ for topological reasons. 
Thus $\D T$ consists of two embedded 2--spheres. 
One boundary sphere $\D_{\mathrm{inn}}T$ of $T$ is contained in 
$C\cap M_{\eps}^{\nn}$,
and the other boundary sphere $\D_{\mathrm{out}}T$ 
is contained in $M_{\eps}^{\nn}-C$. 
(Note that $\D F\subset M_{\eps}^{\nn}$.) 

We consider now two situations 
which are of special interest to us. 

{\em Situation 1: $M=A_1$.}
This will cover the case of extinction 
to be discussed in section~\ref{sec:befext}.

If $M_{\eps}^{\neck}=M$, 
then $M$ is globally foliated
and consequently $M\cong S^2\times S^1$. 

If $M_{\eps}^{\neck}\subsetneq M$, 
let $C\subset M$ be an $\eps$--cap
and let $T$ be the $\eps$--tube associated to $C$. 
The boundary sphere $\D_{\mathrm{out}}T$ 
is contained in a different $\eps$--cap $\hat C$. 
The caps $C$ and $\hat C$ are disjoint, 
and we obtain the tube--cap decomposition 
\begin{equation}
\label{tcsit1}
M=C\cup T\cup\hat C 
\end{equation} 
of $M$.
In this case, there are exactly two equivalence classes of $\eps$--caps. 

{\em Situation 2.}
This more general situation which we describe now 
is tailored to apply to the highly curved region in a Ricci flow 
short before a surgery time, cf.\ section~\ref{sec:topeff}. 

By a {\em funnel} $Y\subset M_{\eps}^{\neck}$ 
we mean a submanifold $\cong S^2\times [0,1]$
which is a union of leaves of the foliation ${\mathcal F}$,
and which has one highly curved boundary sphere $\D_hY$ 
and one boundary sphere $\D_lY$ with lower curvature.
Quantitatively,
we require that 
$\min\scal|_{\D_hY} > C\cdot\max\scal|_{\D_lY}$, 
where $C(\eps)>>1$ is a constant 
greater than the bound 
for the possible oscillation of scalar curvature on $\eps$--caps. 
That is, $C$ is chosen as follows, 
cf.\ Addendum~\ref{add:bounds}: 
If $y_1,y_2$ are points in any $\eps$--cap then 
$\scal(y_1)<C\cdot \scal(y_2)$. 

Suppose that we are given a finite $\rho$--invariant family 
of pairwise disjoint funnels $Y_j\subset M_{\eps}^{\neck}$ 
(corresponding later to parts of horns),
that $M_1\subset M$ is a union of components of 
$M-\cup_jY_j$ such that $\D M_1=\cup_j\D_hY_j$, 
and furthermore that $M_1\subset A_1$. 

We restrict our attention to those $\eps$--caps which intersect $M_1$. 
Under our assumptions,
every such cap $C$ is contained in $M_1\cup Y$, $Y:=\cup_jY_j$,
and all $\eps$--caps equivalent to $C$ also intersect $M_1$. 

Let $C_1,\dots, C_m$ be representatives for the equivalence classes 
of these caps, $m\geq0$. 
We have that 
$C_i\cap M_{\eps}^{\nn}\subset A_1$. 
Let $T'_i$ be the $\eps$--tube associated to $C_i$.
It is the unique component of $F$
such that every $\eps$--cap equivalent to $C_i$ 
contains precisely one of its boundary spheres, 
namely $\D_{\mathrm{inn}}T'_i:=\D T'_i\cap C_i$. 
In particular, $T'_i$ depends only on the equivalence class $[C_i]$. 

If $T'_i\cap Y\neq\emptyset$,  
then we truncate $T'_i$ where it leaves $\ol{M_1}$. 
That is, we replace $T'_i$ by the compact subtube $T_i\subseteq T'_i$ 
with the properties that 
$\D_{\mathrm{inn}}T_i=\D_{\mathrm{inn}}T'_i$ 
and $\D_{\mathrm{out}}T_i=T_i\cap Y$ 
is a sphere component of $\D_hY$. 
Otherwise, we put $T_i:=T'_i$. 

If $T_i\cap\D Y=\emptyset$, 
then $T_i\subset M_1$ 
and $\D_{\mathrm{out}}T_i$ is contained in a different cap 
$C_{\iota(i)}$, $\iota(i)\neq i$. 
As in situation 1 above,  
\begin{equation}
\label{tcsit2a}
C_i\cup T_i\cup C_{\iota(i)}
\end{equation} 
is a closed connected component of $M_1$. 
On the other hand, 
if $T_i\cap\D Y\neq\emptyset$
and hence 
$T_i\cap\D Y=\D_{\mathrm{out}}T_i$,
then 
\begin{equation}
\label{tcsit2b}
C_i\cup T_i
\end{equation}
is a component of $\ol{M_1}$
with one boundary sphere. 
We call it an {\em $\eps$--tentacle}. 
All caps $C_i$ occur in a component (\ref{tcsit2a}) or (\ref{tcsit2b}).

There may be further components of $M_1$ 
that are contained in $F$.
They are either closed and $\cong S^2\times S^1$, 
or they are {\em tubes} $\cong S^2\times[0,1]$ 
whose boundary spheres are components of $\D_h Y$. 

This provides the tube--cap decomposition of $M_1$. 

{\em Equivariance.}
In situation 2,
the tube $T_i$ depends only on the equivalence class $[C_i]$.
Therefore the subgroup $\Stab_G([C_i])$ of $G$
preserves $T_i$,  
every ${\mathcal F}$--leaf contained in $T_i$ 
and hence also the cap $C_i$. 
The union of tubes $T_i$ is invariant under the whole group $G$,  
and the caps $C_i$ can be adjusted such that 
their union is $\rho$--invariant, too. 
Situation 1 is analogous to the case (\ref{tcsit2a}) of situation 2.
Thus the tube-neck decomposition can also always be done equivariantly. 

\section{Actions on caps}
\label{sec:actcap}

In order to compare the $G$--actions before and after surgery,
see section~\ref{sec:topeff} below, 
one needs to classify the action on the highly curved region
near the singularity of the Ricci flow,
i.e.\ short before the surgery. 
On the necklike part,
one has very precise control on the geometry and hence also on the action. 
On the other hand, 
the caps are diffeomorphic to $B^3$ or $\R P^3-\bar B^3$ 
but at least in the first case
there is relatively little information about their geometry. 
However,
caps may have nontrivial stabilizers in $G$ 
and one must verify that their actions on the caps are standard. 
This is the aim of the present chapter. 
For an alternative way of controlling the action on caps (which does not
require Proposition \ref{subact}) we refer to \cite{diss}.

Since we are working with non-equivariant approximations by local models,
we obtain almost isometric actions on local models 
which are not defined everywhere but only on a large region 
whose size depends on the quality of the approximation. 
The key step,
see section~\ref{sec:approx}, 
is to approximate such partially defined almost isometric actions 
by globally defined isometric actions on nearby local models with symmetries.
This is possible due to the compactness properties 
of the spaces of local models. 
We then verify in section~\ref{sec:isomactmod}
that isometric actions on local models are standard,
as well as their restrictions to invariant caps. 
From this we deduce in section~\ref{sec:capactstand} 
that the actions on caps are indeed standard. 

\subsection{Approximating almost isometric actions on local models
by isometric ones}
\label{sec:approx}

Suppose that $\rho\co G\acts (M,g)$ is an isometric action, 
that $(N,h)$ is a local model
with normalized curvature in a base point, $\scal(x')=1$, 
and that 
$\phi\co \tildeB^N(x',\frac{1}{\eps_1})\to M$ is an
$\eps_1$--homothetic embedding 
whose image contains an open subset $V$ 
preserved by a subgroup $H\leq G$.
Then the pulled-back action 
$\phi^{*}\rho|_H\co H\acts\phi^{-1}(V)$
is $\tilde\eps_1(\eps_1)$--isometric 
with $\tilde\eps_1(\eps_1)\to 0$ for $\eps_1\to 0$, 
compare the definitions in section~\ref{sec:defnot}. 

The next result allows 
to improve approximations by almost isometric partial actions 
to approximations by global isometric actions. 

\begin{lem}
\label{equivarcaps}
For $a,\zeta>0$ and a finite group $H$
there exists $\eta(a,\zeta,H)>0$
such that: 

Suppose that $(N,x')$ is a time slice 
of a $\kappa_0$-- or a rescaled standard solution,
normalized so that $\scal(x')=1$.
Furthermore, for some $\eps_1\in(0,\eta]$ 
let $\rho\co H\acts V$ be an $\eps_1$--isometric action
on an open subset $V$ with 
$\tildeB(x',\frac{99}{100}\frac{1}{\eps_1})\subseteq V\subseteq N$, 
and suppose that there is a $\rho$--invariant open subset $A$, 
$x'\in A\subset V$,  
with  $\rrad(x',A)<a$. 

Then there exists a time slice $(\hat N,\hat x')$ 
of a $\kappa_0$-- or rescaled standard solution 
which is $\zeta$--close to $(N,x')$, 
a globally defined isometric action $\hat\rho\co H\acts\hat N$ 
and a $(\rho,\hat\rho)$--equivariant smooth embedding 
$\iota\co A\embed\hat N$. 
\end{lem}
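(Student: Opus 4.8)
The plan is to argue by contradiction using the compactness of the space of pointed $\kappa_0$-- and rescaled standard solutions. Suppose the statement fails for some fixed $a,\zeta>0$ and finite group $H$. Then there is a sequence $\eps_1^{(k)}\to 0$, pointed local model time slices $(N_k,x'_k)$ normalized so that $\scal(x'_k)=1$, open sets $V_k$ with $\tildeB(x'_k,\tfrac{99}{100}\tfrac{1}{\eps_1^{(k)}})\subseteq V_k\subseteq N_k$, $\eps_1^{(k)}$--isometric actions $\rho_k\co H\acts V_k$, and $\rho_k$--invariant open sets $A_k$ with $x'_k\in A_k$ and $\rrad(x'_k,A_k)<a$, such that no $\zeta$--close model $\hat N$ with a global isometric $H$--action receiving an equivariant embedding of $A_k$ exists. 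By the compactness statements in section~\ref{sec:locmod}, after passing to a subsequence the pointed spaces $(N_k,x'_k)$ converge in the ${\mathcal C}^\infty$--topology to a pointed time slice $(N_\infty,x'_\infty)$ of a $\kappa_0$-- or rescaled standard solution; fix exhausting compact neighborhoods and smooth embeddings $\psi_k\co U_\infty^{(k)}\embed N_k$ realizing this convergence, where $U_\infty^{(k)}\nearrow N_\infty$.

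Next I would transport the almost-isometric actions to the limit. For $k$ large, the radius $\tfrac{99}{100}\tfrac{1}{\eps_1^{(k)}}$ exceeds any prescribed bound, so $\psi_k$ is defined on a region containing $\tildeB(x'_\infty, R)$ for any fixed $R$, and $\psi_k^*\rho_k$ gives an action of $H$ on larger and larger subsets of $N_\infty$ which is $\tilde\eps_1^{(k)}$--isometric with $\tilde\eps_1^{(k)}\to 0$. For each $\gamma\in H$ the maps $\psi_k^{-1}\circ\rho_k(\gamma)\circ\psi_k$ are uniformly bounded in every ${\mathcal C}^m$--norm on compact sets (here one uses the uniform curvature and injectivity-radius bounds for local models, Addendum~\ref{add:bounds} together with $\kappa_0$--noncollapsedness), so after a further diagonal subsequence they converge in ${\mathcal C}^\infty_{loc}$ to a smooth map $\rho_\infty(\gamma)\co N_\infty\to N_\infty$. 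The limit maps are isometries of $(N_\infty,h_\infty)$ because each is a ${\mathcal C}^\infty_{loc}$ limit of $\eps_1^{(k)}$--isometries, and the group relations pass to the limit, so $\rho_\infty\co H\acts N_\infty$ is a global \emph{isometric} action. (If the limit is a shrinking round cylinder, the action is isometric with respect to the cylinder metric and lives on all of $S^2(\sqrt2)\times\R$; this case is harmless.)

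Finally I would produce the required embedding and close the contradiction. Since $\rrad(x'_k,A_k)<a$ uniformly, the sets $A_k$ lie in a fixed relative ball $\tildeB(x'_k,a)$, on which $\psi_k^{-1}$ is eventually defined; set $\iota_k:=\psi_k^{-1}|_{A_k}\co A_k\embed N_\infty$, which is a smooth embedding. Because $\psi_k^*\rho_k\to\rho_\infty$ in ${\mathcal C}^\infty_{loc}$ and $\rho_k$ preserves $A_k$, the map $\iota_k$ is $(\rho_k,\rho_\infty)$--equivariant up to an error that tends to $0$; a small ${\mathcal C}^\infty$--perturbation of $\iota_k$ on $\bar A_k$, supported near where equivariance fails and obtained by averaging over the finite group $H$ (replacing $\iota_k$ by $\tfrac{1}{|H|}\sum_{\gamma\in H}\rho_\infty(\gamma)^{-1}\circ\iota_k\circ\rho_k(\gamma)$ in normal coordinates, then correcting to remain an embedding), yields a genuinely $(\rho_k,\rho_\infty)$--equivariant smooth embedding $\iota\co A_k\embed N_\infty$. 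Moreover $(N_\infty,x'_\infty)$ is $\zeta$--close to $(N_k,x'_k)$ for $k$ large, by the convergence. Thus $\hat N:=N_\infty$, $\hat\rho:=\rho_\infty$ and $\iota$ satisfy all the requirements of the lemma for large $k$, contradicting the choice of the sequence; hence the desired $\eta(a,\zeta,H)$ exists.

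\textbf{Main obstacle.} The delicate point is not the extraction of the limit action (that is a routine Arzelà--Ascoli argument backed by the uniform geometry bounds of Addendum~\ref{add:bounds}) but the final step: one must simultaneously (i) make $\iota$ \emph{exactly} equivariant rather than almost equivariant, (ii) keep it a smooth embedding, and (iii) do this over the possibly non-compact but relatively bounded set $A_k$ without disturbing it where it is already good. The equivariant averaging trick over the finite group $H$ in local exponential coordinates handles (i) and (iii), and the smallness of the perturbation — controlled by $\tilde\eps_1^{(k)}\to 0$ and by how close $\psi_k^*\rho_k$ is to $\rho_\infty$ — guarantees (ii) via the openness of the embedding condition in the ${\mathcal C}^1$--topology on compact exhaustions of $A_k$.
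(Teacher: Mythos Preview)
Your proof is correct and follows essentially the same route as the paper: contradiction, compactness of pointed local models to extract a limit $(N_\infty,x'_\infty)$, Arzel\`a--Ascoli to get a limit isometric action $\rho_\infty$, and then an averaging trick to upgrade the almost-equivariant embedding $\psi_k^{-1}|_{A_k}$ to an exactly $(\rho_k,\rho_\infty)$--equivariant one. The only point where the paper is more explicit is the averaging step: rather than ``averaging in normal coordinates'' it invokes the Grove--Karcher Riemannian center of mass \cite{GroveKarcher} of the points $\{\rho_\infty(h)^{-1}\circ(\psi_k^*\rho_k)(h)(x):h\in H\}$, which makes the construction intrinsically well-defined on the manifold and immediately smooth, and yields the equivariance identity $\rho_\infty(h)\circ c_k=c_k\circ(\psi_k^*\rho_k)(h)$ by a direct reindexing computation --- so no separate ``correction to remain an embedding'' is needed, since $c_k\to\id$.
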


\begin{proof}
We argue by contradiction. 
We assume that for some $a,\zeta,H$ 
there exists no such $\eta$. 
Then there exist sequences 
of positive numbers $\eps_{1i}\searrow 0$, 
of time slices of $\kappa_0$-- or standard solutions $(N_i,x'_i)$ with $\scal(x'_i)=1$,
of $\eps_{1i}$--isometric actions 
$\rho_i\co H\acts V_i$ on open subsets $V_i$,
$\tildeB(x'_i,\frac{99}{100}\frac{1}{\eps_{1i}})\subseteq V_i\subseteq N_i$,
and of $\rho_i(H)$--invariant open neighborhoods $A_i$ of $x'_i$
with $\rrad(x'_i,A_i)<a$, 
which violate the conclusion of the lemma for all $i$.

According to the compactness theorems for $\kappa$-- and standard solutions,
cf.\ section~\ref{sec:locmod}, 
after passing to a subsequence,
the $(N_i,x'_i)$ converge smoothly to a time slice $(N_{\infty},x'_{\infty})$
of a $\kappa_0$-- or a renormalized standard solution. 
Hence for $i$ sufficiently large, 
$(N_i,x'_i)$ is $\zeta$--close to $(N_{\infty},x'_{\infty})$.

The convergence of the actions follows: 
By the definition of closeness, 
given $\nu>0$, we have 
for $i\geq i(\nu)$ 
an $\nu$--isometric embedding
$\psi_i\co (\tildeB(x'_{\infty},\frac{1}{\nu}),x'_{\infty})\embed (N_i,x'_i)$ 
such that $\mathrm{im}(\psi_i)\subset V_i$. 
Our assumption implies $\rrad(x'_i,\rho_i(H)x'_i)<a$ and 
that there exists an open $\rho_i(H)$--invariant subset $U_i$
with 
$\tildeB(x'_i,\frac{1}{\nu}-\frac{101}{100}a)\subset U_i\subset \mathrm{im}(\psi_i)$. 
We pull the $\eps_{1i}$--isometric action $\rho_i|_{U_i}$ back 
to an $\tilde\eps_i(\eps_{1i},\nu)$--isometric action 
$\psi_i^*\rho_i\co H\acts\psi_i^{-1}(U_i)$.
Let $(\nu_i)_{i\geq i_0}$ be a sequence of positive numbers, $\nu_i\searrow0$,
such that $i(\nu_i)\leq i$. 
Then the action $\psi_i^*\rho_i$ is in fact 
$\tilde\eps_i(\eps_{1i},\nu_i)$--isometric, 
and $\tilde\eps_i(\eps_{1i},\nu_i)\to0$.
We have that $U_i\nearrow N_{\infty}$
and $\limsup_{i\to\infty}\rrad(x'_{\infty},\psi_i^*\rho_i(H)x'_{\infty})
\leq a$. 
This implies that, after passing to a subsequence,
the actions $\psi_i^*\rho_i$ converge 
to an isometric limit action 
$\rho_{\infty}\co H\acts N_{\infty}$
with $\rrad(x'_{\infty},\rho_{\infty}(H)x'_{\infty})\leq a$. 

Consider now the open subsets $\psi_i^{-1}(A_i)\subset N_{\infty}$. 
Their diameters are uniformly bounded, 
$\rrad(x'_{\infty},\psi_i^{-1}(A_i))<\frac{101}{100}a$. 
For $i\to\infty$, the almost isometric action $\psi_i^*\rho_i$
and the isometric action $\rho_{\infty}$
become arbitrarily close on $\psi_i^{-1}(A_i)$
(actually on a much larger subset). 
Following Palais \cite{Palais} and Grove--Karcher \cite{GroveKarcher}, 
we construct for large $i$ smooth maps conjugating 
$\psi_i^*\rho_i|_{\psi_i^{-1}(A_i)}$ into $\rho_{\infty}$. 
For $h\in H$ the smooth maps 
$\phi_{i,h}:=\rho_{\infty}(h)^{-1}\circ(\psi_i^*\rho_i)(h)\co 
\tildeB(x'_{\infty},\frac{101}{100}a)\to N_{\infty}$ 
converge to the identity. 
For $i$ sufficiently large, the sets $\{\phi_{i,h}(x):h\in H\}$,  
$x\in B(x'_{\infty},\frac{101}{100}a)$, 
have sufficiently small diameter 
so that their center of mass $c_i(x)$ is well-defined, 
see \cite[Proposition 3.1]{GroveKarcher}. 
The maps $c_i$ are smooth, cf.\ \cite[Proposition 3.7]{GroveKarcher}, 
and $c_i\to \id_{N_{\infty}}$. 
Note that the center of the set 
$\{(\rho_{\infty}(h')^{-1}\circ(\psi_i^*\rho_i)(h')
\circ(\psi_i^*\rho_i)(h))(x):h'\in H\}$
equals $c_i((\psi_i^*\rho_i)(h)(x))$.
On the other hand, 
it is the $\rho_{\infty}(h)$--image of the center of the set 
$\{(\rho_{\infty}(h'h)^{-1}\circ(\psi_i^*\rho_i)(h'h))(x):h'\in H\}$,
and the latter equals $c_i(x)$.
So $\rho_{\infty}(h)\circ c_i=c_i\circ(\psi_i^*\rho_i)(h)$ on 
$B(x'_{\infty},\frac{101}{100}a)$
and thus 
$\rho_{\infty}(h)\circ(c_i\circ\psi_i^{-1})=
(c_i\circ\psi_i^{-1})\circ\rho_{i}(h)$
on $A_i$.
The existence of the 
$(\rho_i,\rho_{\infty})$--equivariant smooth embeddings 
$\iota_i=c_i\circ\psi_i^{-1}\co A_i\embed N_{\infty}$
shows that the conclusion of the lemma is satisfied for large $i$.
Putting $(\hat N,\hat x')=(N_{\infty},x'_{\infty})$
and $\hat\rho=\rho_{\infty}$, 
we obtain a contradiction.  
\end{proof}

\subsection{Isometric actions on local models are standard}
\label{sec:isomactmod}

Let $(N,h)$ be a local model,
i.e.\ a time slice of a $\kappa$-- or standard solution,
and let $\rho\co H\acts N$ be an isometric action by a finite group. 

If $N$ is the time slice of a standard solution,
it has rotational symmetry,
$\Isom(N,h)\cong O(3)$. 
The natural action $\Isom(N,h)\acts N$
is smoothly conjugate to the orthogonal action $O(3)\acts B^3$, 
and in particular 
the action $\rho$ is standard. 

If $N$ is a round cylinder 
or its orientable smooth $\Z_2$--quotient, 
then $\rho$ is also clearly standard. 

Otherwise, 
$N$ is the time slice of a $\kappa$--solution
and has strictly positive sectional curvature.
This case is covered by the following result. 

\begin{prop}
\label{isomactkappa}
An isometric action by a finite group on a complete 3--manifold
with strictly positive sectional curvature 
is smoothly conjugate to an 

(i) 
orthogonal action on $\R^3$ if the manifold is noncompact.  

(ii)
isometric action on a spherical space form if the manifold is compact. 
\end{prop}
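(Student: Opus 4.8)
The plan is to reduce both cases to the classification of finite subgroups of the isometry group together with the known structure of the underlying manifolds. In case (i) the manifold $N$ is diffeomorphic to $\R^3$ (by Cheeger--Gromoll and Gromoll--Meyer, as recalled in section~\ref{sec:locmod}), so one first wants to understand $\Isom(N,h)$. A compact Lie group acting effectively and isometrically on a complete manifold of positive sectional curvature has a fixed point or a totally geodesic orbit of positive dimension; since $N$ is contractible, a Smith-theory / Lefschetz argument (or the soul construction applied equivariantly) produces a common fixed point $p$ for the finite group $H$. At such a fixed point the exponential map $\exp_p\co T_pN\to N$ is an $H$--equivariant diffeomorphism (Cartan--Hadamard fails because curvature is positive, so instead I would argue directly: $N$ positively curved and diffeomorphic to $\R^3$ has no conjugate points along geodesics from $p$ only if\ldots) -- more safely, I would use that $N$ is, by the soul theorem, diffeomorphic to the normal bundle of the soul, which here is the point $p$, hence $\exp_p$ is a diffeomorphism. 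Then $d\rho_p\co H\acts T_pN\cong\R^3$ is an orthogonal linear action and $\exp_p$ conjugates $\rho$ to it, giving a standard action.

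For case (ii), $N$ is a compact $3$--manifold of strictly positive sectional curvature, hence by Hamilton's theorem \cite{Hamilton_posric} it carries a metric of constant positive curvature and is a spherical space form $S^3/\Gamma$; moreover its given metric $h$ need not be round, so the issue is to conjugate the $H$--action into an isometric action for \emph{some} round metric. I would pass to the universal cover $\tilde N=S^3$ with its positively curved metric $\tilde h$; the $H$--action lifts to an action of an extension $1\to\Gamma\to\tilde H\to H\to 1$ on $(S^3,\tilde h)$ by isometries. Now apply the round-metric case: a finite group acting smoothly on $S^3$ that preserves a metric of positive sectional curvature -- here one uses that $(S^3,\tilde h)$ has finite isometry group, and Ricci flow (or the resolution of the Generalized Smale Conjecture / the classical result that $\Diff(S^3)\simeq O(4)$) shows any finite subgroup of $\Diff(S^3)$ is conjugate into $O(4)$. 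Conjugating $\tilde H$ into $O(4)$ equivariantly descends to conjugate the $H$--action on $S^3/\Gamma$ into an isometric action for the round quotient metric.

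The main obstacle is the last step of case (ii): promoting a smooth finite group action on $S^3$ to an orthogonal one. This is exactly the content of the Generalized Smale Conjecture in the equivariant form, and for a self-contained argument within the Ricci-flow framework of this paper one should instead invoke the finite-time extinction of Ricci flow on $S^3/\Gamma$ together with the surgery-compatibility machinery -- i.e.\ this Proposition is really the base case that Theorem~E rests on, so one must be careful not to argue circularly. I expect the actual proof to sidestep the full conjecture by using the special geometry of $\kappa$--solutions: a positively curved $\kappa$--solution time slice is either compact (hence close to round after flowing, or rigidly a space form by the rigidity statements in section~\ref{sec:locmod}) or diffeomorphic to $\R^3$, and in the compact case one can run the equivariant Ricci flow on $(N,h)$ itself, which by Hamilton converges to a round metric, carrying the $H$--action to an isometric action in the limit; a continuity/Cheeger--Gromov argument then conjugates the original action into the limiting isometric one.

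So, concretely, the steps are: (1) reduce to the two geometric cases via the topological classification recalled above; (2) in the noncompact case, produce an $H$--fixed point using positivity of curvature and contractibility, then use the soul theorem to identify $N$ with $T_pN$ equivariantly via $\exp_p$, obtaining an orthogonal action; (3) in the compact case, run the (automatically $H$--equivariant) Ricci flow starting from $(N,h)$, invoke Hamilton's convergence to a round metric, and use the resulting $H$--equivariant convergence plus a center-of-mass / Cheeger--Gromov compactness argument (as in Lemma~\ref{equivarcaps}) to conjugate $\rho$ into the limiting isometric action on the spherical space form. The delicate point throughout is ensuring the equivariant convergence is strong enough to produce an honest smooth conjugacy rather than merely an approximate one, which is where the averaging technique of Grove--Karcher \cite{GroveKarcher} already used in section~\ref{sec:approx} does the work.
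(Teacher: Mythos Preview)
Your overall strategy matches the paper's in both cases, but there is a genuine technical gap in case~(i) and some unnecessary overhead in case~(ii).

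For case~(ii) the paper simply says ``direct consequence of Hamilton \cite{Hamilton_posric}'', which is exactly your final approach: the normalized Ricci flow on $(N,h)$ converges in $\mathcal{C}^{\infty}$ on the fixed manifold $N$ to a round metric $g_{\infty}$, and since each $g_t$ is $H$--invariant (by uniqueness of the flow), so is $g_{\infty}$. Hence $H$ already acts isometrically on the spherical space form $(N,g_{\infty})$; no passage to the universal cover, no Grove--Karcher averaging, and no Cheeger--Gromov argument is needed. Your detour through the Generalized Smale Conjecture is, as you noticed, circular, and your fallback is correct but over-engineered.

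For case~(i) your outline --- produce an $H$--fixed point via an equivariant soul construction, then conjugate $\rho$ to $d\rho_p$ on $T_pN$ --- is exactly what the paper does (it states this as a separate Proposition~\ref{equivsoul}). The gap is your claim that $\exp_p$ furnishes the conjugating diffeomorphism. The soul theorem asserts that $N$ is diffeomorphic to $T_pN$, but \emph{not} via $\exp_p$; in strictly positive curvature there is no reason for $\exp_p$ to be a global diffeomorphism (Cartan--Hadamard goes the wrong way, and long geodesics can develop conjugate points since $\inf\sec=0$ by Bonnet--Myers). The paper's fix is the correct one: once the equivariant Cheeger--Gromoll exhaustion yields a unique $H$--fixed soul point $s$ (the point of $C_0$ at maximal distance from $\partial C_0$), one uses that $d(s,\cdot)$ has no critical points on $N\setminus\{s\}$ in the Grove--Shiohama sense, builds an $H$--invariant gradient-like vector field agreeing with $\nabla d(s,\cdot)$ near $s$, and lets its flow provide the smooth conjugacy to $d\rho_s$ on $T_sN$. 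Near $s$ this does coincide with $\exp_s^{-1}$, but globally it need not.
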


The compact case is a direct consequence of Hamilton \cite{Hamilton_posric}.
The noncompact case follows from an equivariant version of the Soul Theorem
which holds in all dimensions: 

\begin{prop}
\label{equivsoul}
Let $W^n$ be a complete noncompact Riemannian manifold 
with strictly positive sectional curvature. 
Suppose that $H$ is a finite group 
and that $\rho\co H\acts W$ is an isometric action.  
Then $H$ fixes a point and $\rho$ is smoothly conjugate 
to an orthogonal action on $\R^n$. 
\end{prop}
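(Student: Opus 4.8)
The plan is to run the Cheeger--Gromoll soul construction equivariantly. Since $\rho$ acts by isometries, all the objects entering the soul construction can be made $H$-invariant if the initial choices are. First I would fix an $H$-fixed point $p$ of a modified Busemann-type function: concretely, pick any point $q\in W$ and average the busemann functions, or more simply note that the function $x\mapsto \sum_{h\in H} d(x,\rho(h)q)^2$ is proper (because $W$ has positive, hence nonnegative, curvature and is noncompact, so distance functions are proper) and $H$-invariant, hence attains its minimum on a nonempty compact $H$-invariant set; but rather than argue about that set directly, I would instead recall that in strictly positive curvature the soul is a point (Gromoll--Meyer / Cheeger--Gromoll), and the point is intrinsically defined up to the choices in the construction. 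The key observation is that the whole Cheeger--Gromoll filtration $W\supseteq C_1\supseteq C_2\supseteq\cdots$, built from a fixed basepoint $q_0$ by taking $C_0 = W\setminus B_{b_{q_0}}$ where $b_{q_0}$ is the Busemann function and then iterating, becomes $H$-invariant once we replace the single Busemann function by the $H$-average (or, equivalently, intersect over $h\in H$ of the half-spaces coming from $\rho(h)$ applied to a fixed ray). Each $C_i$ is totally convex; intersections of totally convex sets are totally convex; and the construction terminates at a totally convex set of dimension zero, which in strictly positive curvature is a single point $\{s\}$. Since the construction was $H$-equivariant, $\rho(h)C_i = C_i$ for all $i$, so $\rho(h)s = s$, i.e. $H$ fixes $s$.

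Once we have the fixed point $s$, the second step is to conjugate $\rho$ to an orthogonal action. Because $W$ has nonnegative curvature and the soul is the point $s$, the Cheeger--Gromoll soul theorem gives that $W$ is diffeomorphic to $\R^n$, indeed diffeomorphic to the total space of the normal bundle of the soul, which here is just $T_sW$; the diffeomorphism is $\exp_s$ followed by the Sharafutdinov-type retraction collapsing, or more elementarily, since the soul is a point, $\exp_s\co T_sW\to W$ is already a diffeomorphism (this is the statement that a complete nonnegatively curved manifold with a point soul has no conjugate or cut points from that point — it is a direct consequence of the soul theorem in the point-soul case). Since $\exp_s$ is $\rho$-equivariant with respect to the isometric linear action $d\rho_s\co H\acts T_sW$ on the source and $\rho$ on the target, this $\exp_s$ is the required smooth conjugacy, and $d\rho_s$ is orthogonal with respect to the inner product $g_s$. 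Choosing a linear identification $T_sW\cong\R^n$ carrying $g_s$ to the standard inner product finishes the argument.

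The main obstacle to be addressed carefully is the equivariance of the soul construction: one must make sure the sequence of totally convex sets can genuinely be chosen $H$-invariant. This is where I would either (a) average the Busemann function of a fixed ray over the $H$-action and check that the sublevel/superlevel sets one uses in the Cheeger--Gromoll iteration are still totally convex and still shrink — the subtle point being that an $H$-average of Busemann functions need not itself be a Busemann function, but the Cheeger--Gromoll argument only uses convexity/concavity properties that are preserved under taking minima over the finite family $\{b\circ\rho(h)^{-1}\}_{h\in H}$ — or (b) invoke directly that the soul of a nonnegatively curved manifold is unique up to ambient isometry (Sharafutdinov) together with the fact that $\mathrm{Isom}(W)$ permutes souls, so a finite group has a fixed soul; in the strictly positive case this soul is a point. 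Option (b) is cleanest and I expect the paper to take essentially that route. A minor additional point is the well-known fact that in strictly positive sectional curvature a totally convex set of positive dimension cannot be minimal in the filtration (its boundary would be totally convex of one lower dimension and nonempty), forcing the soul to be $0$-dimensional; this should be stated but is standard.
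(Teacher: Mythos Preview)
Your plan for the first half is essentially the paper's: make the Cheeger--Gromoll filtration $H$-invariant by starting from an $H$-invariant configuration (your option (a); the paper starts the construction from all rays emanating from a fixed $H$-orbit, which amounts to the same thing), so that the resulting soul point $s$ is $H$-fixed. Contrary to your expectation, the paper does \emph{not} take option (b), and that route would need more care: Sharafutdinov gives uniqueness of the soul only up to ambient isometry, so one would still have to argue that the set of all candidate soul points carries enough structure for a finite isometry group to have a fixed point on it.

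The genuine gap is in your second step. You assert that $\exp_s\co T_sW\to W$ is already a diffeomorphism, calling this ``a direct consequence of the soul theorem in the point-soul case.'' It is not. The soul theorem produces a diffeomorphism $W\cong\R^n$, but the construction goes through Sharafutdinov retraction or Morse theory for $d(s,\cdot)$, not through $\exp_s$; nothing in the soul theorem says the soul point is a pole (empty cut locus). Strictly positive curvature makes conjugate points occur \emph{sooner}, not later, so there is no Cartan--Hadamard-type statement to invoke. The paper handles exactly this point: it uses that $d(s,\cdot)$ has no critical points away from $s$ in the Grove--Shiohama sense, constructs an $H$-invariant gradient-like vector field $X$ for $d(s,\cdot)$ (arranged to coincide with the smooth radial field $\nabla d(s,\cdot)$ near $s$), and uses the flow of $X$ to conjugate $\rho$ to the orthogonal action $d\rho_s$ on $T_sW$. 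Near $s$ this conjugacy is indeed given by $\exp_s^{-1}$, but globally it need not be.
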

\begin{proof}
One follows the usual proof of the Soul Theorem by Gromoll--Meyer
\cite{poscurv}, Cheeger--Gromoll \cite{soul},
see e.g.\ the nice presentation in Meyer \cite[Chapters 3.2 and 3.6]{Meyer}, 
in the special case of strictly positive curvature 
and makes all constructions group invariant. 

In a bit more detail: 
Starting from the collection of all geodesic rays 
with initial points in a fixed $H$--orbit,
one constructs 
an exhaustion $(C_t)_{t\geq0}$ of $W$ 
by $H$--invariant compact totally convex subsets.
We may assume that $C_0$ has nonempty interior. 
Since the sectional curvature is strictly positive,
$C_0$ contains a unique point $s$ at maximal distance from its boundary. 
It is fixed by $H$ and it is a soul for $M$. 
The distance function $d(s,\cdot)$ has no critical points 
(in the sense of Grove and Shiohama \cite{GroveShiohama})
besides $s$. 
One can construct an $H$--invariant gradient-like vector field $X$
for $d(s,\cdot)$ on $W-\{s\}$. 
It can be arranged that $X$ coincides with the radial vector field 
$\nabla d(s,\cdot)$ near $s$. 
Using the flow of $X$ one obtains a smooth conjugacy between $\rho$ 
and its induced orthogonal action $d\rho_s$ on $T_sW\cong\R^n$. 
Near $s$ it is given by the (inverse of the) exponential map. 
\end{proof}

\begin{rem}
In the compact case, 
if the local model $N$ has large diameter, 
then the possibilities for the actions are more restricted,
as the discussion in section~\ref{sec:eqtc} shows.
There is a $\rho$--invariant tube--cap decomposition
$N=C_1\cup T\cup C_2$
and the central leaf $\Si$ of the tube $T$ 
is preserved by $\rho$.
\end{rem}

We now apply Proposition~\ref{subact} 
to deduce that for any cap in a local model, 
which is invariant under an isometric action,  
the restricted action on the cap is standard. 

\begin{cor}
\label{actballinkappa}
Suppose that $\bar C\subset N$ 
is a compact $\rho$--invariant submanifold 
diffeomorphic to $\bar B^3$ or $\R P^3-B^3$. 
Then the restricted action $\rho|_{\bar C}$ is standard.
\end{cor}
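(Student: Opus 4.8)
The plan is to reduce the corollary to Proposition~\ref{subact} by passing to a round model. Using Proposition~\ref{isomactkappa} and the remarks preceding it, I would first assume, up to smooth conjugacy, that $(N,\rho)$ is one of: $\R^3$ with $\rho$ orthogonal; the round cylinder $S^2\times\R$ or its orientable $\Z_2$--quotient $S^2\times_{\Z_2}\R$ with $\rho$ isometric; or $S^3/\Gamma$ with the round metric and $\rho$ isometric. The submanifold $\bar C$ is carried along with unchanged diffeomorphism type, and the whole of $H$ still acts on it.

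\textbf{The case $\bar C\cong\bar B^3$.} For $N=\R^3$ (with $\rho$ orthogonal) and $N=S^2\times\R$ (with $\rho$ isometric, hence a product action possibly twisted by an interval reflection) I would exhibit an $H$--equivariant smooth embedding into round $S^3$: the central projection $v\mapsto(v,1)/\sqrt{1+|v|^2}$ maps $\R^3$ onto an open hemisphere, intertwining $O(3)$ with the block subgroup $O(3)\subset O(4)$, and $(x,\theta)\mapsto\big((\cos\theta)e_1,(\sin\theta)x\big)$ maps $S^2\times(0,\pi)$ onto the complement of two antipodal points of $S^3$, intertwining the isometries of $S^2\times\R$ with the subgroup of $O(4)$ of matrices $\mathrm{diag}(\pm1,A)$. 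Then $\bar C$ becomes a $\rho$--invariant smooth closed ball in round $S^3$ with an isometric action, and Proposition~\ref{subact} applies. For $N=S^2\times_{\Z_2}\R$ and $N=S^3/\Gamma$ I would pass to a simply connected cover --- respectively $S^2\times\R$ (then embedded in $S^3$ as above) and $S^3$ itself: since $\bar C\cong\bar B^3$ is simply connected, its preimage is a disjoint union of smooth balls permuted freely and transitively by the deck group, the group of all lifts acts isometrically on $S^3$, and the stabilizer of one component maps isomorphically onto $H$; applying Proposition~\ref{subact} to that invariant ball and pushing the conclusion down finishes this case.

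\textbf{The case $\bar C\cong\R P^3-B^3$.} Here $\pi_1(\bar C)=\Z_2$ must inject into $\pi_1(N)$, so either $N=S^2\times_{\Z_2}\R$ or $N$ is compact. If $N$ is compact, set $B_0:=\overline{N\setminus\Int\bar C}$; then $\partial B_0\cong S^2$ and $N\cong\R P^3\#\widehat{B_0}$, so primeness of the spherical space form $N$ forces $B_0\cong\bar B^3$ and $N\cong\R P^3$, which by Proposition~\ref{isomactkappa}(ii) we may take to be round. By the ball case $\rho|_{B_0}$ is standard, so $\rho$ fixes the center $c\in B_0$, whence $d\rho_c$ is orthogonal and, via $\exp_c$ (injectivity radius $\tfrac{\pi}{2}$), $\rho$ is conjugate to the quotient by the boundary antipodal map of the orthogonal action $d\rho_c$ on the round ball $\bar B^3(\tfrac{\pi}{2})$, carrying $B_0$ to an invariant ball about the origin. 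After an equivariant isotopy shrinking it off the cut locus and straightening it to a round ball $B(0,r)$, the region $\bar C$ becomes $\{r\le|v|\le\tfrac{\pi}{2}\}$ modulo the antipodal identification on the outer sphere, on which $d\rho_c$ acts radially; passing to the orientation double cover $S^2\times[-1,1]$, on which the lifted group acts by sphere--isometries times interval reflections or the identity, an affine reparametrization of the interval factor exhibits $\rho|_{\bar C}$ as conjugate to an isometric action for the $S^2\times\R$--structure. The case $N=S^2\times_{\Z_2}\R$ is parallel and simpler: $\bar C$ lifts to an invariant $S^2\times[-1,1]$ in $S^2\times\R$ on which the lifted group acts by product isometries, and the quotient action is isometric for the $S^2\times\R$--structure.

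\textbf{Main obstacle.} I expect the delicate point to be the case $\bar C\cong\R P^3-B^3$: beyond the use of Proposition~\ref{subact} to control the complementary ball $B_0$, one needs the equivariant isotopy straightening the invariant ball $B_0$ around the fixed point $c$ to a round ball, so that its complement becomes a manifestly standard $S^2\times_{\Z_2}$--region. This is a standard equivariant isotopy argument in the spirit of Section~\ref{sec:invballs}, but it is the one step where the reduction to Proposition~\ref{subact} is not purely formal. The case $\bar C\cong\bar B^3$, by contrast, follows at once from Proposition~\ref{subact} once the appropriate round model --- a hemisphere in $S^3$, or $S^3$ as a cover --- has been identified.
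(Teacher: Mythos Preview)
Your approach coincides with the paper's: reduce to a round model via Proposition~\ref{isomactkappa}, apply Proposition~\ref{subact} to balls after passing to $S^3$, and for $\bar C\cong\R P^3-B^3$ in a compact $N$ show $N\cong\R P^3$, apply the ball case to the complement $B_0$, and equivariantly isotope $\partial B_0$ to a small round sphere. You correctly flag this isotopy as the substantive step. The paper's organisation differs only in that it compactifies \emph{all} noncompact models at once (adding one or two points to get $S^3$ or $\R P^3$ with a standard action), rather than giving your explicit embeddings; the content is the same.

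There is one small gap. In the case $\bar C\cong\R P^3-B^3$ with $N=S^2\times_{\Z_2}\R$, your claim that ``the quotient action is isometric for the $S^2\times\R$--structure'' does not yet yield that $\rho|_{\bar C}$ is standard in the sense of Section~\ref{defstan}. The lift $\tilde C\subset S^2\times\R$ is merely \emph{diffeomorphic} to $S^2\times[-1,1]$; it need not be a product region $S^2\times[a,b]$, so the restriction of a product isometry of $S^2\times\R$ to $\tilde C$ is not automatically conjugate to a product isometry of the model $S^2\times[-1,1]$. You would still need an equivariant isotopy of $\partial\tilde C$ to a pair of horizontal spheres---essentially the same work you already do in the compact case, not something ``simpler''. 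The paper avoids this entirely by compactifying $S^2\times_{\Z_2}\R$ to $\R P^3$ (one point at infinity), after which this case is absorbed into your compact--$\R P^3$ argument.
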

\begin{proof}
When $N$ is noncompact,
it can be compactified by adding one or two points 
to a smooth manifold $\cong S^3$ or $\R P^3$,
and the action $\rho$ can be extended to a smooth standard action. 
The latter is clear 
when $N$ is the time slice of a standard solution
or when $N$ is isometric to $S^2\times\R$ or $S^2\times_{\Z_2}\R$.
It follows from Proposition~\ref{equivsoul} when 
$N$ has strictly positive sectional curvature. 
In view of Proposition~\ref{isomactkappa} (ii), 
we may therefore assume that $N$ is {\em metrically}
a spherical space form $S^3/\Ga$. 

Suppose first that $\bar C$ is a ball.
Then $\bar C$ can be lifted to a closed ball 
$\bar B\subset S^3$
and $\rho$ can be lifted to an isometric action $\tilde\rho\co H\acts S^3$
preserving $\bar B$. 
Now Proposition~\ref{subact} implies that the restricted action
$\tilde\rho|_{\bar B}$ is standard, 
and therefore also $\rho|_{\bar C}$. 

We are left with the case when $\bar C\cong\R P^3-B^3$. 
Since $S^3/\Ga$ is irreducible,
the 2--sphere $\D C$ bounds on the other side a ball $B'$,  
and hence $N\cong\R P^3$. 
As before, 
Proposition~\ref{subact} implies that 
$\rho|_{\bar B'}$ is standard. 
It follows that $\D B'=\D C$ can be $\rho$--equivariantly isotoped 
to a (small) round sphere,
and that also the action $\rho|_{\bar C}$ is standard. 
\end{proof}

\subsection{Actions on caps are standard}
\label{sec:capactstand}

We take up the discussion of the equivariant tube--cap decomposition
from section~\ref{sec:eqtc}. 
Let $x\in M_{\eps}^{\nn}\cap A_1$
and let $C\subset M$ be an $\eps$--cap centered at $x$ 
as given by Proposition~\ref{caps}. 
Every other $\eps$--cap that
intersects $C$ agrees with $C$ on $M_{\eps}^{\nn}\cap A_1$, 
cf.\ Corollary~\ref{cor:capsintersect}. 
Let 
$H:=\Stab_{G}(C \cap M_{\eps}^{\nn} \cap A_1)$. 
Then $C$ can be modified to be $H$--invariant, 
and we have that 
$\ga C\cap C=\emptyset$ for all $\ga\in G-H$. 

\begin{prop}
\label{actcaps} 
There exists $0<\epseight(H,\eps)\leq\epsseven$ such
that for $0<\eps_1\leq\epseight$ holds: Let $x\in M_{\eps}^{\neck}\cap
A_1$ be center of an $H$--invariant $\epsilon$--cap $C$,
$H=\Stab_{G} (C \cap M_{\eps}^{\nn} \cap A_1)$, then the restricted
action $\rho|_{\bar C} \co  H\acts\bar C$ is standard.
\end{prop}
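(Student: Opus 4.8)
The plan is to combine Lemma~\ref{equivarcaps} with Corollary~\ref{actballinkappa}. The point is that the $\eps$--cap $C$, being contained in the region $A_1$ approximated by local models, can be transported to a genuine local model, where the action becomes isometric and the cap becomes an invariant submanifold of the type handled by Corollary~\ref{actballinkappa}.

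First I would set up the approximation. Since $x\in M_\eps^{\nn}\cap A_1$, there is a $\kappa_0$-- or standard solution time slice $(N,x',h)$, normalized so that $\scal(x')=1$, and an $\eps_1$--homothetic embedding $\phi\colon\tildeB^N(x',\frac{1}{\eps_1})\to M$ with $\phi(x')=x$. Pulling back the $\rho(H)$--action on the $H$--invariant neighborhood $V\subset\phi(\tildeB^N(x',\frac{1}{\eps_1}))$ of $C\cap M_\eps^{\nn}\cap A_1$, one obtains an $\tilde\eps_1(\eps_1)$--isometric partial action $\rho'\colon H\acts\phi^{-1}(V)$ on $N$, as explained at the start of section~\ref{sec:approx}. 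By Addendum~\ref{add:bounds} the cap $C$ has relative radius $<\bar d$, so its preimage $A:=\phi^{-1}(C)$ is a $\rho'(H)$--invariant open subset with $\rrad(x',A)<a$ for a uniform constant $a=a(\eps)$ (we may enlarge $A$ slightly to make it open and still have bounded relative radius; strictly speaking one works with $\bar C$ and an open neighborhood). Now apply Lemma~\ref{equivarcaps} with this $a$, with $\zeta$ arbitrary (say $\zeta=1$), and with the given group $H$: for $\eps_1\leq\eta(a,\zeta,H)=:\epseight(H,\eps)$ we get a local model $(\hat N,\hat x')$, a globally defined isometric action $\hat\rho\colon H\acts\hat N$, and a $(\rho',\hat\rho)$--equivariant smooth embedding $\iota\colon A\embed\hat N$.

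Next I would identify the transported cap inside $\hat N$ and invoke Corollary~\ref{actballinkappa}. The composition $\iota\circ\phi^{-1}$ restricted to $\bar C$ gives an $H$--equivariant embedding of the closed cap $\bar C$ into $\hat N$ whose image $\bar C':=\iota(\phi^{-1}(\bar C))$ is a compact $\hat\rho(H)$--invariant submanifold diffeomorphic to $\bar B^3$ or $\R P^3-B^3$. Since $\hat\rho$ is an isometric action by a finite group on a local model, Corollary~\ref{actballinkappa} applies and tells us that $\hat\rho|_{\bar C'}$ is standard. But $\rho|_{\bar C}\colon H\acts\bar C$ is smoothly conjugate to $\hat\rho|_{\bar C'}$ via $\iota\circ\phi^{-1}$, because $\iota$ is $(\rho',\hat\rho)$--equivariant and $\phi$ intertwines $\rho|_V$ with $\rho'$. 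Hence $\rho|_{\bar C}$ is standard, which is the assertion. (The statement of the proposition says $x\in M_\eps^{\neck}\cap A_1$, but this is a typo for $M_\eps^{\nn}\cap A_1$, consistent with the hypothesis that $C$ is an $\eps$--cap centered at $x$.)

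The main obstacle, and the only point requiring real care, is checking that the hypotheses of Lemma~\ref{equivarcaps} are genuinely met uniformly: one needs that the $\eps_1$--homothety $\phi$ covers a ball of relative radius at least $\frac{99}{100\eps_1}$ in $N$ so that the pulled-back action is defined on $\tildeB(x',\frac{99}{100\eps_1})$, and that the cap $\bar C$ (equivalently its preimage) has relative radius bounded by a constant $a$ depending only on $\eps$ and not on the particular manifold or solution. The first is part of the definition of $\eps_1$--approximation together with the fact that $x\in A_0=A_1$; the second is precisely the content of Addendum~\ref{add:bounds} ($\rrad(x,C)<\bar d$), so one sets $a:=\frac{101}{100}\bar d(\eps)$ to absorb the slight enlargement to an open set. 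Once these uniform bounds are in place the threshold $\epseight(H,\eps):=\min\{\epsseven,\,\eta(a,1,H)\}$ is well-defined and the argument goes through. One should also note that $H$ is finite (it is a subgroup of the finite group $G$), so Lemma~\ref{equivarcaps} indeed applies with this $H$.
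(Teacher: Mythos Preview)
Your proposal is correct and follows essentially the same approach as the paper's proof: pull back the $H$--action via the $\eps_1$--approximation to an almost isometric partial action on a local model, apply Lemma~\ref{equivarcaps} to replace it by a genuine isometric action on a nearby local model $\hat N$, and then invoke Corollary~\ref{actballinkappa} on the transported cap. Your observation about the typo $M_\eps^{\neck}$ versus $M_\eps^{\nn}$ is also correct. One small imprecision: the threshold $\eta$ in Lemma~\ref{equivarcaps} bounds the almost-isometry parameter of the pulled-back action, which is $\tilde\eps_1(\eps_1)$ rather than $\eps_1$ itself, so $\epseight$ should be chosen so that $\eps_1\leq\epseight$ forces $\tilde\eps_1(\eps_1)\leq\eta$; the paper makes this explicit, but your argument is otherwise the same.
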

\begin{proof} 
Let $(N,x',h)$ be a time slice of a 
$\kappa_0$-- or rescaled standard solution,
normalized so that $\scal(x')=1$. 
Suppose that $(N,x',h)$ 
$\eps_1$--approximates $(M,x,g)$,  
and let $\phi\co \tildeB(x',\frac{1}{\eps_1})\embed M$
be an $\eps_1$--homothetic embedding 
realizing the approximation.

Since $\rrad(x,\rho(H)x)\leq\rrad(x,C)<\bar d$,
cf.\ Proposition~\ref{caps}, 
there exists an open subset $V$ of $N$, 
$\tildeB(x',\frac{1}{\eps_1}-\frac{101}{100}\bar d)\subset V
\subset \tildeB(x',\frac{1}{\eps_1})$,
such that $\phi(V)$ is $\rho(H)$--invariant. 
The pulled-back action $\phi^*\rho$ on $V$ is 
$\tilde\eps_1(\eps_1)$--isometric
with $\tilde\eps_1(\eps_1)\to0$ as $\eps_1\to0$. 
Let $A\subset \tildeB(x',\frac{101}{100}\bar d)$ 
be a $(\phi^*\rho)(H)$--invariant open neighborhood of $x'$
such that $C\subset\phi(A)$. 

Now we can apply Lemma~\ref{equivarcaps}. 
We fix some $\zeta_0>0$ (which will not play a role afterwards) 
and choose 
$\epseight\in(0,\epsseven]$ sufficiently small 
such that 
$0<\eps_1\leq\epseight$ implies 
$\tilde\eps_1(\eps_1)\leq\eta(\frac{101}{100}\bar d,\zeta_0,H)
=:\eta(H,\eps)$.
The lemma yields a time slice $(\hat N,\hat x')$
of a $\kappa_0$-- or rescaled standard solution, 
an isometric action $\hat\rho\co H\acts\hat N$,
and a $(\phi^*\rho,\hat\rho)$--equivariant embedding 
$\iota\co A\embed\hat N$.
The latter implies 
that the action 
$\phi^*\rho|_{\phi^{-1}(\bar C)}$
is smoothly conjugate to the action 
$\hat\rho|_{\iota\circ\phi^{-1}(\bar C)}$,
and thus 
$\rho|_{\bar C}$
is smoothly conjugate to 
$\hat\rho|_{\iota\circ\phi^{-1}(\bar C)}$. 
According to Corollary~\ref{actballinkappa},
the latter action is standard.
\end{proof}

We will henceforth put 
$\epsnine(G,\eps):=
\min\{\epseight(H,\eps):H\leq G\}$ 
and assume that $0<\eps_1\leq\epsnine$.

\section{Equivariant Ricci flow with cutoff and applications}
\label{sec:cutoff}

We will now derive our main results 
about smooth actions by finite groups on closed 3--manifolds.
Given an action 
$\rho_0\co G\acts M_0$,
we choose a $\rho_0$--invariant Riemannian metric $g_0$ on $M_0$.
Perelman's construction of Ricci flow with cutoff 
carries over to the equivariant case
in a straight-forward manner, 
see section~\ref{sec:ex}, 
and yields an equivariant Ricci flow with cutoff 
defined for all times 
and with initial time slice $\rho_0\co G\acts(M_0,g_0)$. 
Based on the fact proven in section~\ref{sec:actcap}
that actions on caps 
in the highly curved regions near the singularities of the Ricci flow
are standard,
we are able to classify the actions on the time slices short before extinction 
and, more generally, 
describe the change of the actions when crossing a singular time. 
We then focus on the case when the initial manifold $M_0$ is irreducible 
and further on actions on elliptic and hyperbolic 3--manifolds. 
Finally,
we discuss actions on $S^2\times\R$--manifolds.

\subsection{Existence} 
\label{sec:ex}

The construction of a Ricci flow with $(r,\de)$--cutoff,
i.e.\ with a specified way of surgery,  
for closed orientable 3--manifolds with arbitrary initial metrics 
is one of the fundamental contributions of Perelman \cite{Perelman_surgery}. 
For a detailed discussion of Ricci flows with surgery 
we refer to \cite[Sections 68--80]{KL}, 
\cite[Chapters 13--17]{MorganTian_poinc} and \cite[Chapter 7]{Bamler}.
We will adopt the notation in \cite[Section 68]{KL}. 
There is the following difference, however. 
We use the parameter $\eps_1$ (instead of $\eps$) 
to measure the quality of approximation
of canonical neighborhoods by local models 
($\kappa_0$--solutions, shrinking spherical space forms or standard solutions).
That is, our parameter $\eps_1$ plays the role of the parameter $\eps$
in \cite[Lemma 59.7 and Definition 69.1]{KL}.

For a Ricci flow with surgery 
$({\mathcal M},(g_t)_{0\leq t<+\infty})$
there is a discrete, possibly empty or infinite, 
sequence of {\em singular} times
$0<t_1<\dots<t_k<\dots$. 
Let $k_{\max}$
denote the number of singular times,
$0\leq k_{\max}\leq+\infty$. 
We denote by $M_k$ the orientable closed 3--manifold 
underlying the time slices  
${\mathcal M}_{t_k}^+$ and ${\mathcal M}_t$ for $t_k<t<t_{k+1}$.  
(We put $t_0:=0$ and,
if $k_{\max}<+\infty$,
also $t_{k_{\max}+1}:=+\infty$.)

We will only consider Ricci flows with {\em $(r,\de)$--cutoff}.
These are Ricci flows with surgery 
where the surgery is performed in a specific way.
Let us recall 
how one passes at a singular time $t_k$ 
from the backward (pre-surgery) time slice ${\mathcal M}_{t_k}^-$
to the forward (post-surgery) time slice ${\mathcal M}_{t_k}^+$,
compare \cite[Definition 73.1]{KL}.
The manifold underlying ${\mathcal M}_{t_k}^-$
is the open subset 
$\Om=\{x\in M_{k-1} \,|\,\limsup_{t\nearrow
t_k}\left|\Rm(x,t)\right|<\infty\}$, where $\Rm$ is the Riemann
curvature tensor.
On $\Om$ the Riemannian metrics converge smoothly to a limit metric,
$g_t\to g_{t_k}^-$ as $t\nearrow t_k$. 
Let $\rho:=\de(t_k)r(t_k)$. 
In order to obtain ${\mathcal M}_{t_k}^+$,
we discard all components of $\Om$ 
that do not intersect 
$\Om_{\rho}=\{x\in \Om \,|\, \scal(x,t_k)\leq \rho^{-2} \}$. 
We say that a component of $M_{k-1}$ 
that does not intersect $\Om_{\rho}$ 
goes {\em extinct} at time $t_k$.
For volume reasons, 
there are only finitely many components $\Om_i$ of $\Om$ 
which do intersect $\Om_{\rho}$. 
If a component $\Om_i$ is closed 
then the Ricci flow smoothly extends to times after $t_k$
and $\Om_i$ survives to $M_k$ without being affected by the surgery. 
Each noncompact component $\Om_i$ has finitely many ends,
and the ends are represented by $\eps_1$--horns 
${\mathcal H}_{ij}\subset\Om_i$. 
The horns are disjoint and contained in the foliated necklike region $F$
introduced in section~\ref{sec:fol}.  
The surgery is performed at $\de(t_k)$--necks 
${\mathcal N}_{ij}\subset{\mathcal H}_{ij}$
which are centered at points with scalar curvature 
$h(t_k)^{-2}$. 
The quantity $h(t_k)<\rho$ is given by \cite[Lemma 71.1]{KL}.
(The necks ${\mathcal N}_{ij}$ are in fact final time slices of strong 
$\de(t_k)$--necks.) 
The horn ${\mathcal H}_{ij}$ is cut along the 
(with respect to the neck parametrization) 
central ${\mathcal F}$--leaf, 
i.e.\ cross-sectional sphere $S_{ij}\subset{\mathcal N}_{ij}$
and capped off by attaching a 3--ball. 
The region $X:={\mathcal M}_{t_k}^-\cap{\mathcal M}_{t_k}^+$
common to backward and forward time slice 
is a compact 3--manifold with boundary $\D X$
equal to the union of the surgery spheres $S_{ij}$. 
One may regard $X$ as a submanifold of both $M_{k-1}$ and $M_k$. 

Let now $G$ be a finite group. 
A {\em $G$--equivariant Ricci flow with surgery}
consists of a Ricci flow with surgery 
$({\mathcal M},(g_t)_{0\leq t<+\infty})$
together with a smooth group action 
$\rho\co G\acts{\mathcal M}$
such that $\rho$ preserves each time slice ${\mathcal M}_t^{\pm}$
and acts on it isometrically.
Moreover, 
we require that $\rho$ maps static curves to static curves. 
The restriction of $\rho$ to the time slab 
${\mathcal M}_{(t_k,t_{k+1})}$
corresponds to a smooth action $\rho_k\co G\acts M_k$
which is isometric with respect to the Riemannian metrics 
$g_{t_k}^+$ and $g_t$ for $t_k<t<t_{k+1}$. 

In \cite{Perelman_surgery}, 
Perelman only discussed the nonequivariant case 
(i.e.\ when $G$ is trivial), 
but not much has to be modified 
to extend the discussion to the equivariant case. 
The usual Ricci flow without surgery on a closed 3--manifold 
preserves the symmetries of the initial metric
(as a consequence of its uniqueness).
Hence the metrics $g_t$, $t_k<t<t_{k+1}$,  
will have the same symmetries as $g_{t_k}^+$. 
To obtain an equivariant Ricci flow with cutoff
for a given equivariant initial condition, 
one must only ensure 
that no symmetries get lost in the surgery process.
Once the surgery necks are chosen equivariantly
in the sense of Lemma~\ref{equivneck},
the surgery process as described in \cite[Section 72]{KL}
does preserve the existing symmetries. 

The equivariant choice of surgery necks can be easily achieved. 
One can arrange that every $\eps_1$--horn ${\mathcal H}_{ij}$
at the surgery time $t_k$  
is saturated with respect to the foliation ${\mathcal F}$
and that the union of the horns is $\rho$--invariant,
i.e.\ the horns are permuted by the group action. 
Note that $H_{ij}=\Stab_G({\mathcal H}_{ij})$ 
preserves every ${\mathcal F}$--leaf 
in ${\mathcal H}_{ij}$. 
The $\de(t_k)$--necks ${\mathcal N}_{ij}$ can also be chosen 
$\rho$--equivariantly and as ${\mathcal F}$--saturated subsets,
and then $H_{ij}=\Stab_G({\mathcal N}_{ij})$. 
Using Lemma~\ref{equivneck}, 
we $\rho$--equivariantly replace 
the ${\mathcal N}_{ij}$ by 
$(H_{ij},\tilde\de(\de(t_k),G))$--necks $\widetilde{\mathcal N}_{ij}$ 
centered at the same points. 
The $\widetilde{\mathcal N}_{ij}$
have the additional property
that the approximating $\tilde\de$--homothetic diffeomorphisms 
$\phi_{ij}\co S^2(\sqrt{2})\times(-\frac{1}{\tilde\de},\frac{1}{\tilde\de})
\to\widetilde{\mathcal N}_{ij}$
can be chosen such that 
the pulled-back actions 
$\hat\rho_{ij}=\phi_{ij}^*(\rho|_{H_{ij}})$ 
on $S^2(\sqrt{2})\times(-\frac{1}{\tilde\de},\frac{1}{\tilde\de})$
are isometric
(and trivial on the interval factor). 
Since $\tilde\de(\de,G)\to0$ as $\de\to0$, 
we can keep $\tilde\de$ arbitrarily small by suitably decreasing $\de$. 

To glue in the surgery caps,
we follow the interpolation procedure 
of \cite[Lemma 72.24]{KL}.
The surgery caps are truncated standard solutions
and have the full $O(3)$--symmetry. 
The gluing can therefore be done equivariantly 
and so $\rho$ extends to an isometric action 
on the glued in surgery caps,
that is, on the entire time slice ${\mathcal M}_{t_k}^+$. 
The time-$t_k$ Hamilton-Ivey pinching condition 
is satisfied on ${\mathcal M}_{t_k}^+$
if $\de$ is chosen sufficiently small.  

The justification of the a priori conditions, 
i.e.\ the argument that for a suitable choice of the parameter functions 
$r,\de\co [0,\infty)\to(0,\infty)$ 
the canonical neighborhood condition 
as formulated in \cite[Section 69]{KL}   
remains valid during the flow, 
is not affected by the presence of a group action, 
compare \cite[Section 77]{KL}. 

One concludes,  
cf.\ \cite[Proposition 5.1]{Perelman_surgery}
and \cite[Proposition 77.2]{KL}, 
that there exists $\epsten(G)>0$ 
such that the following holds:
If $0<\eps_1\leq\epsten$, 
then there exist positive nonincreasing functions 
$r,\bar\de\co [0,\infty)\to(0,\infty)$ 
such that for any normalized initial data 
$\rho\co G\acts{\mathcal M}_0$ 
and any nonincreasing function $\de\co [0,\infty)\to(0,\infty)$ 
with $\de<\bar\de$,  
the $G$--equivariant Ricci flow with $(r,\de)$--cutoff 
is defined for all times. 

We will henceforth put 
$\epseleven(G,\eps):=
\min(\epsten(G),\epsnine(G,\eps))$
and assume that $0<\eps_1\leq\epseleven$.

\subsection{Standard actions short before extinction}
\label{sec:befext}

Let $\rho\co G\acts{\mathcal M}$ be an equivariant Ricci flow with cutoff.
We consider now the situation when 
some of the connected components of $M_{k-1}$ 
go extinct at the singular time $t_k$. 
It is known, cf.\ \cite[Section 67]{KL},
that each such component 
is diffeomorphic to a spherical space form, 
to $\R P^3\sharp\,\R P^3$ or to $S^2\times S^1$. 

\begin{thm}[Extinction]
\label{thm:ext}
Suppose that $M_{k-1}^{(1)}$ is a connected component of $M_{k-1}$ 
which goes extinct at the singular time $t_k$.
Then the part 
$\Stab_G(M_{k-1}^{(1)})\acts M_{k-1}^{(1)}$ 
of the action $\rho_{k-1}\co G\acts M_{k-1}$ is standard.
\end{thm}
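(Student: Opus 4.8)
The plan is to show that short before the extinction time $t_k$, the time slice on the component $M_{k-1}^{(1)}$ satisfies the hypotheses of Situation~1 from section~\ref{sec:eqtc}, so that it admits a $\rho_{k-1}$--invariant tube--cap decomposition, and then to feed this decomposition into the results of section~\ref{sec:actcap} and section~\ref{sec:eqcs} to conclude that the action is standard. First I would recall from \cite[Section 67]{KL} that a component going extinct at $t_k$ has, for $t$ slightly less than $t_k$, the property that its entire time slice consists of points with large scalar curvature lying in canonical neighborhoods; quantitatively, one can choose $t<t_k$ close enough that $M=M_{k-1}^{(1)}$ (with the metric $g_t$) satisfies $M=A_1(\eps,\eps_1)$, i.e.\ every point is $\eps_1$--approximated by a $\kappa_0$-- or standard solution and has relative radius $>D(\eps)$. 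This is exactly the hypothesis ``Situation~1: $M=A_1$'' analyzed in section~\ref{sec:eqtc}. The action $\rho_{k-1}$ restricted to $\Stab_G(M_{k-1}^{(1)})$ is isometric with respect to $g_t$, so we are in the equivariant setting of that section.

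Next I would split into the two cases of Situation~1. If $M_{\eps}^{\neck}=M$, then $M$ is globally foliated by the $\rho$--invariant foliation $\mathcal{F}$ of Lemma~\ref{eqfolneck}, hence $M\cong S^2\times S^1$ and the action preserves the sphere foliation; Corollary~\ref{acts2s1} then says directly that the action is standard. Otherwise $M_{\eps}^{\neck}\subsetneq M$ and we get the equivariant tube--cap decomposition $M=C\cup T\cup\hat C$ as in \eqref{tcsit1}, with exactly two equivalence classes of $\eps$--caps; the union of the two caps can be taken $\rho$--invariant and the tube $T$ is $\rho$--invariant with its central leaf $\Sigma$ a $\rho$--invariant 2--sphere. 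The stabilizers $H=\Stab_G(C\cap M_{\eps}^{\nn}\cap A_1)$ and $\hat H$ act on the respective caps, and by Proposition~\ref{actcaps} (using $\eps_1\le\epsnine$) the restricted actions $\rho|_{\bar C}$ and $\rho|_{\bar{\hat C}}$ are standard, i.e.\ smoothly conjugate to orthogonal actions on $\bar B^3$.

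To assemble the global conclusion I would cut $M$ along $\Sigma$, producing two pieces each diffeomorphic to a ball (since $C$ union a collar of the tube is a ball, and likewise $\hat C$), carrying standard $H$-- resp.\ $\hat H$--actions; this exhibits $M$ (with its $\rho$--action) as an equivariant connected sum of two standard actions on 3--spheres in the sense of section~\ref{sec:eqcs}, equivalently $M_{\mathcal P}$ with $M$ a disjoint union of two standard $S^3$'s and $\mathcal{P}$ a single pair of points. Since $M_{k-1}^{(1)}$ is diffeomorphic to a spherical space form, to $\R P^3\sharp\R P^3$, or to $S^2\times S^1$, and in all the irreducible cases $M_{\mathcal P}$ is irreducible and connected, Proposition~\ref{decoirred}(i) applies and gives that $\rho_{\mathcal P}$ is standard; the case $\R P^3\sharp\R P^3$ (which is not irreducible, but is exactly $S^2\times_{\Z_2}[-1,1]$ glued to its mirror, or rather is handled because both caps are $\cong\R P^3-B^3$) and the case $S^2\times S^1$ are handled separately via the definition of standard actions on those manifolds and Corollary~\ref{exttoball}. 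The main obstacle I anticipate is the bookkeeping in this last gluing step: one must verify that gluing the two standard cap-actions along $\Sigma$ via the equivariant diffeomorphism $\psi$ coming from the neck parametrization does not depend on $\psi$ up to conjugacy — this is where Proposition~\ref{equivdiffeo2sph} and Corollary~\ref{exttoball} are essential, since an a priori arbitrary equivariant gluing diffeomorphism of the central sphere must be isotoped to an isometric one and then extended over a cap — and that the types of the caps ($B^3$ versus $\R P^3-\bar B^3$) are matched correctly with the topological type of $M_{k-1}^{(1)}$.
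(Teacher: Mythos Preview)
Your overall strategy matches the paper's, but there is one genuine gap and one place where your argument is more convoluted than necessary.

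\textbf{The gap: you cannot assume $M=A_1$.} From high scalar curvature at times $t$ just below $t_k$ you only get $M=A_0$, i.e.\ every point is $\eps_1$--approximated by a local model. You do \emph{not} automatically get $\rrad(x,M)>D(\eps)$ for every $x$: the component could be globally $\eps_1$--close to a compact $\kappa$--solution of small diameter (the basic example being a round spherical space form shrinking to a point). In that situation $A_1\subsetneq A_0$ and the tube--cap decomposition of Situation~1 is unavailable. The paper treats this case separately: by the remark at the end of section~\ref{sec:cap} (via Addendum~\ref{add:bounds}), if $A_1\subsetneq A_0$ then $(M,g_t)$ has uniformly positive sectional curvature, and Hamilton \cite{Hamilton_posric} gives directly that the action is conjugate to an isometric action on a spherical space form. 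Only after disposing of this case may one assume $M=A_1$ and proceed with the tube--cap analysis.

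\textbf{The gluing step.} Once you have $M=C\cup T\cup\hat C$ with standard actions on the closed caps, the detour through equivariant connected sums and Proposition~\ref{decoirred} is unnecessary and, as you yourself notice, awkward when caps are $\cong\R P^3-\bar B^3$. The paper simply observes that the full stabilizer $\Stab_G(M_{k-1}^{(1)})$ preserves the central leaf of $T$ and may or may not swap the two caps; the subgroup $G'$ fixing each cap has index $1$ or $2$. Knowing the $G'$--action on each closed cap is standard (Proposition~\ref{actcaps}), Corollary~\ref{exttoball} (both parts, covering the $\bar B^3$ and $\R P^3-B^3$ cases) lets you glue the two standard pieces along the central sphere to a standard action on $M$, handling all three topological types $S^3$, $\R P^3$, $\R P^3\sharp\R P^3$ at once. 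You should also say explicitly how the possible index-$2$ swap is accommodated; the paper's formulation via Corollary~\ref{exttoball} does this cleanly.
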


We recall from section~\ref{defstan} 
that an action on a spherical space form 
is standard if and only if it is smoothly conjugate 
to an isometric action, 
and an action on $S^2\times S^1$ or $\R P^3\sharp\,\R P^3$
is standard 
if and only if there exists an invariant Riemannian metric 
locally isometric to $S^2\times\R$.

\begin{proof}
If $t\in(t_{k-1},t_k)$ is sufficiently close to $t_k$, 
then ${\mathcal M}_t=(M_{k-1}^{(1)},g_t)$ has everywhere high scalar curvature,
$\scal>\frac{99}{100}r(t_k)^{-2}\de(t_k)^{-2}$,
and is therefore everywhere locally $\eps_1$--approximated by a local model,
i.e.\ we have that $M_{k-1}^{(1)}=A_0$. 
(We adopt the notation of section~\ref{sec:tcdec}
with $(M_{k-1}^{(1)},g_t)$ playing the role of $(M,g)$.)

If there exists $x\in M_{k-1}^{(1)}$ with $\rrad_t(x,M_{k-1}^{(1)})\leq D$, 
then $(M_{k-1}^{(1)},g_t)$ has strictly positive sectional curvature,
compare (the end of) section~\ref{sec:cap}
and the assertion follows from Hamilton \cite{Hamilton_posric}. 

We may therefore assume that $M_{k-1}^{(1)}=A_1$  
and consider the equivariant tube--cap decomposition 
as in situation 1 of section~\ref{sec:eqtc}. 
If $F=M_{k-1}^{(1)}$, 
then $M_{k-1}^{(1)}\cong S^2\times S^1$ 
and Corollary~\ref{acts2s1} implies that 
$\Stab_G(M_{k-1}^{(1)})\acts M_{k-1}^{(1)}$ is standard. 
Otherwise, 
the tube--cap decomposition has the form $M_{k-1}^{(1)}=C_1\cup T\cup C_2$ 
as in (\ref{tcsit1}), 
and $M_{k-1}^{(1)}$ is diffeomorphic to 
$S^3$, $\R P^3$ or $\R P^3\sharp\,\R P^3$. 
The $\rho_{k-1}$--action of $\Stab_G(M_{k-1}^{(1)})$
preserves the central ${\mathcal F}$--leaf of $T$
but it may switch the caps.  
The stabilizer $G':=\Stab_G(C_1)=\Stab_G(C_2)$ has index 1 or 2 
in $\Stab_G(M_{k-1}^{(1)})$. 
According to Proposition~\ref{actcaps},
the restriction of the $\rho_{k-1}(G')$--action to each cap $C_i$ is standard. 
With Corollary~\ref{exttoball} we conclude that 
$\Stab_G(M_{k-1}^{(1)})\acts M_{k-1}^{(1)}$ is standard. 
\end{proof}

\subsection{Topological effect of surgery on group actions}
\label{sec:topeff}

Let $\rho\co G\acts{\mathcal M}$ be an equivariant Ricci flow with cutoff.
We describe now in general,
how the actions before and after a surgery time 
are related to each other. 

\begin{thm}[Topological effect of surgery]
\label{thm:topeff}
For each $k\geq1$,
the action $\rho_{k-1}$ before the surgery time $t_k$ 
is obtained from the action $\rho_k$ afterwards 
in three steps:

(i)
First, one takes the disjoint union of $\rho_k$ 
with a standard action on a finite (possibly empty) union 
of $\R P^{3}$'s.
The stabilizer in $G$ of each such $\R P^{3}$ has a fixed point on it. 

(ii)
Then one forms an equivariant connected sum.
The $\R P^{3}$ components mentioned in (i) correspond to ends of the graph 
associated to the connected sum.
(Compare section~\ref{sec:eqcs}.) 

(iii)
Finally,
one takes the disjoint union 
with a standard action on a closed (possibly empty) 3--manifold
whose components are diffeomorphic 
to a spherical space form, 
to $\R P^3\sharp\,\R P^3$ or to $S^2\times S^1$. 
(These are the components going extinct at time $t_k$,
cf.\ Theorem~\ref{thm:ext}.)
\end{thm}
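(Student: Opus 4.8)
The plan is to trace what surgery does to a single time slice and then reorganize the pieces into the claimed three-step recipe. First I would fix a singular time $t_k$ and a time $t\in(t_{k-1},t_k)$ sufficiently close to $t_k$ so that the highly curved part of ${\mathcal M}_t$ (which contains all of $\Om\setminus X$ and all the horns to be amputated) lies in the region $A_1$ well-approximated by local models. Applying the equivariant tube--cap decomposition of section~\ref{sec:eqtc} in ``Situation 2'', with the funnels $Y_j$ taken to be the $\rho$--invariant sub-horns lying between the surgery spheres $S_{ij}$ and the rest of $\Om$, and with $M_1$ the closure of the region that survives to $M_k$, one gets a $\rho_{k-1}$--invariant decomposition of the relevant part of $M_{k-1}$ into tubes, tentacles (\ref{tcsit2b}), tubes $\cong S^2\times[0,1]$, closed $S^2\times S^1$-components, and closed components $C_i\cup T_i\cup C_{\iota(i)}$ of the form (\ref{tcsit2a}). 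By Proposition~\ref{actcaps} (applicable since $0<\eps_1\leq\epsnine$), the restricted action on each cap $C_i$ appearing here is standard; so the closed components entirely contained in the pre-surgery-but-post-surgery region are standard closed manifolds (spherical space forms, $\R P^3\sharp\,\R P^3$, or $S^2\times S^1$), and these are exactly the components going extinct — contributing step (iii).

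Next I would compare $M_{k-1}$ and $M_k$ along the common submanifold $X={\mathcal M}_{t_k}^-\cap{\mathcal M}_{t_k}^+$, whose boundary is the union of surgery spheres $S_{ij}$. On the $M_k$ side, each $S_{ij}$ bounds a glued-in surgery cap $B_{ij}$ which is a truncated standard solution with full $O(3)$-symmetry, and by the equivariant surgery construction in section~\ref{sec:ex} the action $\rho_k$ restricted to $B_{ij}$ is isometric, hence standard on the ball. On the $M_{k-1}$ side, the sphere $S_{ij}$ is the central leaf of an $(H_{ij},\tilde\de)$--neck inside the horn ${\mathcal H}_{ij}$, and the part of $M_{k-1}$ it cuts off is either an $\eps$--tentacle $C_i\cup T_i$ — whose cap $C_i$ carries a standard $\Stab_G([C_i])$--action by Proposition~\ref{actcaps} — or a standard closed component that has been split off. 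The key structural observation is that passing from $\rho_k$ to $\rho_{k-1}$ amounts to: (a) on the surviving non-extinct components, replacing each glued-in standard $3$--ball $B_{ij}$ by the tentacle-cap $C_i$ (topologically another $3$--ball), which changes nothing up to equivariant diffeomorphism by Corollary~\ref{exttoball}(i) — unless $C_i\cong\R P^3-\bar B^3$, in which case the effect is to take a connected sum with an $\R P^3$; and (b) re-attaching, via equivariant connected sum, any standard closed components that were split off, and separately putting in the extinct components disjointly.

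To organize this into the stated three steps I would run it as follows. Start with $\rho_k$. Step~(i): the surgery caps $C_i$ that are $\cong\R P^3-\bar B^3$ are precisely the places where, reading backwards, $M_{k-1}$ has an extra $\R P^3$ summand; so I introduce a disjoint union of standard actions on the corresponding copies of $\R P^3$, one for each such cap, noting that $\Stab_G$ of each has a fixed point because the cap's standard action on $\R P^3-\bar B^3$ extends (by Proposition~\ref{equivsoul}/Corollary~\ref{actballinkappa}) to a standard action on $\R P^3$ with a fixed point. Step~(ii): form the equivariant connected sum that (a) reglues each of these $\R P^3$'s onto the surviving component at the site of $B_{ij}$, reconstructing the cap $C_i$, and (b) reglues the split-off standard closed components; by Corollary~\ref{exttoball}(i) the smooth conjugacy class is independent of the gluing maps, and the $\R P^3$ summands from (i) sit at endpoints of the associated graph as asserted. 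Step~(iii): take the disjoint union with the standard actions on the extinct components (Theorem~\ref{thm:ext}). The main obstacle I expect is bookkeeping the equivariance and the stabilizers throughout — in particular, verifying that the family of gluing sites $\{S_{ij}\}$ and the associated two-point sets are genuinely $G$--invariant with the right stabilizer groups $H_{ij}=\Stab_G({\mathcal N}_{ij})$, that the caps can be chosen $\rho$--invariant with mutually disjoint translates as in section~\ref{sec:capactstand}, and that Proposition~\ref{actcaps} applies with a uniform $\eps_1\le\epsnine$ to every cap that arises — rather than any single hard geometric estimate, since all the analytic input is already packaged in the cited results.
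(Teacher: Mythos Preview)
Your approach is the paper's approach, but two points in the bookkeeping are off and one of them is a genuine gap.

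First, your setup of Situation~2 is inverted. You take $M_1$ to be ``the closure of the region that survives to $M_k$'' and place the funnels between the $S_{ij}$ and the rest of $\Om$. Situation~2 requires $M_1\subset A_1$, so $M_1$ must be the \emph{highly curved discarded} region. In the paper the funnels $Y_{ij}\subset{\mathcal H}_{ij}$ are chosen with $\partial_lY_{ij}=S_{ij}$, extending from the surgery sphere \emph{deeper into the horn}, and the tube--cap decomposition is applied to $Z=\overline{M_{k-1}-X}$ (minus the closed components), not to $X$. Your subsequent list of pieces (tentacles, tubes, closed components of types (\ref{tcsit2a}) and $S^2\times S^1$) is the correct output, which suggests you understand the geometry but mis-stated the setup.

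Second, and this is the real gap: your step (a) asserts that on the $M_{k-1}$ side each surgery sphere $S_{ij}$ cuts off a tentacle. That is false. A component of $Z$ can be an $\eps$--tube $\cong S^2\times[0,1]$ joining \emph{two} surgery spheres $S_{ij}$ and $S_{i'j'}$; neither of these spheres then bounds a tentacle. The paper handles this by observing that removing such a tube and capping both ends is equivariantly conjugate to cutting the tube along its central ${\mathcal F}$--leaf and capping there --- which is exactly an equivariant connected sum decomposition along a single sphere. Hence tubes contribute the \emph{interior} edges of the graph in step~(ii), with no extra $\R P^3$ or $S^3$ summand. Your phrase ``standard closed components that were split off'' does not correspond to anything that occurs: the components of $M_k$ joined by a tube edge are arbitrary, not standard. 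Once you separate the components of $Z$ into tentacles (giving the end-edges and the $\R P^3$'s of step~(i)) and tubes (giving the remaining edges of step~(ii)), the argument goes through exactly as in the paper.
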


\begin{proof} 
We recall that 
$X={\mathcal M}_{t_k}^-\cap{\mathcal M}_{t_k}^+$
and $\D X$ is the union of the surgery spheres $S_{ij}$. 

When passing from $M_{k-1}$ to $M_k$,
$M_{k-1}-X$ is replaced by a union of balls $B_{ij}$ 
which are attached to the boundary spheres $S_{ij}$ of $X$,
and the restriction of the action $\rho_{k-1}$ to $\ol{M_{k-1}-X}$ 
is replaced by a standard action on the union of the glued-in balls. 

The restriction of $\rho_{k-1}$ to the closed components of $M_{k-1}-X$ 
is standard by Theorem~\ref{thm:ext}. 
This gives step (iii). (Note that the proof follows the
{\em forward} surgery process, so going backwards reverses the
order of the steps.)

Let $Z$ denote the closure of the union of the components of $M_{k-1}-X$ 
with nonempty boundary. 
Then $Z$ is a compact manifold with boundary $\D Z=\cup S_{ij}$. 
In order to analyze $\rho_{k-1}|_Z$, 
we apply the equivariant tube--cap decomposition 
as in situation 2 of section~\ref{sec:eqtc}. 
We choose a $\rho$--invariant family of funnels 
$Y_{ij}\subset{\mathcal H}_{ij}$ 
such that $\D_lY_{ij}=S_{ij}$ with respect to the metric $g_{t_k}^-$. 
That is,
$Y_{ij}$ is contained in the end of $\Om_i$ bounded by $S_{ij}$. 
Let $Y:=\cup Y_{ij}$. 

For a time $t<t_k$
sufficiently close to $t_k$, 
the metric $g_{t_k}^-$ is on the compact manifold $X\cup Y$
arbitrarily well approximated by $g_t$.
Furthermore,  
$\scal(\cdot,t)\geq \frac{99}{100}\rho^{-2}$ on $Z$
and $(Z,g_t)$ 
is therefore everywhere locally $\eps_1$--approximated by a local model,
i.e.\ we have that $Z-Y\subset A_0$. 
Since $Z$ has no closed component,
we have in fact that $Z-Y\subset A_1$. 
According to the discussion in section~\ref{sec:eqtc},
each component of $(Z,g_t)$ is either 
an $\eps_1$--tentacle 
attached to one of the spheres $S_{ij}$ as in (\ref{tcsit2b}),
or an $\eps_1$--tube connecting two of the spheres $S_{ij}$. 

Instead of removing the $\eps_1$--tubes and equivariantly attaching balls 
to the surgery spheres bounding them,
we may cut the $\eps_1$--tubes along their central ${\mathcal F}$--leaves
and attach balls to the resulting boundary spheres. 
The smooth conjugacy type of the action thus obtained is the same. 
Surgery on a tube corresponds to an edge of the graph of the
equivariant connected sum of step (ii).

Tentacles are diffeomorphic to 
$\bar B^3$ or $\R P^3-B^3$, 
and Proposition~\ref{actcaps} implies that the restriction of $\rho_{k-1}$ 
to the union of the $\eps_1$--tentacles is standard. 
We decompose the action $\rho_{k-1}$ 
as an equivariant connected sum 
(in the sense of section~\ref{sec:eqcs}) 
along the family of surgery spheres $S_{ij}$ bounding $\eps_1$--tentacles. 
Each tentacle contributes a summand diffeomorphic to $S^3$ or $\R P^3$
which corresponds to an end of the graph associated 
to the connected sum decomposition. 
In view of Corollary~\ref{exttoball},
the restriction of the action to the union of these summands is standard. 
Vice versa,
when passing from $\rho_k$ back to $\rho_{k-1}$,
the effect of replacing the surgery caps $B_{ij}$
corresponding to $\eps_1$--tentacles by the tentacles
amounts to a connected sum with a standard action
on a union of $S^3$'s and $\R P^3$'s,
and these latter summands correspond to ends of the graph 
associated to the connected sum. 
Adding the $S^3$ summands does not change the smooth conjugacy type 
of the resulting action, 
and they can therefore be omitted.

This completes the proof of the theorem.
\end{proof}

Forgetting about the $G$--action for a moment,
the effect of surgery on the topology of the time slices is as follows.
$M_{k-1}$ is obtained from $M_k$ in two steps: 
First, one takes connected sums of components of $M_k$
and, possibly, copies of $\R P^3$ and $S^2\times S^1$.
Secondly, one takes the disjoint union with finitely many (possibly zero) 
spherical space forms and copies of 
$\R P^3\sharp\,\R P^3$ and $S^2\times S^1$. 
(Note that our definition of equivariant connected sum of an action 
allows connected sums of components with themselves.
Therefore no $S^2\times S^1$ summands are needed in the statement 
of part (i) of Theorem~\ref{thm:topeff}.)

\subsection{The irreducible case}
\label{sec:irred}

Let $\rho\co G\acts{\mathcal M}$ be an equivariant Ricci flow with cutoff 
and suppose now that the initial manifold $M_0$ is irreducible. 
Then only 3--spheres can
split off and the effect of surgery on the group action is more
restricted. 
Theorem~\ref{thm:topeff} specializes to:

\begin{cor}
\label{cor:uniquecomp1}
Suppose that the orientable closed 3--manifold $M_0$ 
is connected and irreducible.

(i)
If $M_0\cong S^3$,
then every manifold $M_k$ is a union of 3--spheres
(possibly empty for $k=k_{\max}$).
The action $\rho_{k-1}$ arises from $\rho_k$ 
by first forming an equivariant connected sum 
and then taking the disjoint union 
with a standard action on a finite union of 3--spheres. 

(ii)
If $M_0\not\cong S^3$,
then there exists $k_0$, $0\leq k_0\leq k_{\max}$, such that: 
For $0\leq k\leq k_0$,  
the manifold $M_k$ has a unique 
connected component $M_k^{(0)}\cong M_0$, 
and all other components are $\cong S^3$. 
For $k_0<k\leq k_{\max}$,
$M_k$ is a union of 3--spheres 
(possibly empty for $k=k_{\max}$). 

Furthermore, 
for $1\leq k\leq k_0$, 
the action $\rho_{k-1}|_{M_{k-1}^{(0)}}$ 
is an equivariant connected sum of $\rho_k|_{M_k'}$
where $M_k'$ is a $\rho_k$--invariant union of 
${M_k^{(0)}}$ with some of the $S^3$--components of $M_k$. 

If $k_0<k_{\max}$, 
then either $M_{k_0}^{(0)}$ goes extinct at time $t_{k_0+1}$
and is diffeomorphic to a spherical space form,
or $M_{k_0}^{(0)}$ does not go extinct at time $t_{k_0+1}$
and is $\cong\R P^3$. 
In the first case,
the action $\rho_{k_0}|_{M_{k_0}^{(0)}}$ is standard.
In the second case,
it is an equivariant connected sum of 
the union of a standard action on $\R P^3$ 
with an action on a finite union of 3--spheres. 
\end{cor}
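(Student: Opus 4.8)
The plan is to feed Theorem~\ref{thm:topeff} into the uniqueness of the prime decomposition of closed orientable $3$--manifolds, exploiting throughout that an irreducible $3$--manifold is prime and not diffeomorphic to $S^2\times S^1$ (so $M_0$ itself is prime). The core is the following dichotomy, proved by induction on $k$: for every $k$, either $(\mathrm{A}_k)$ $M_k$ has exactly one connected component $\cong M_0$ and all of its other components are $\cong S^3$, or $(\mathrm{B}_k)$ every component of $M_k$ is $\cong S^3$; moreover $(\mathrm{B}_{k-1})$ implies $(\mathrm{B}_k)$. The case $k=0$ holds because $M_0$ is connected. For the inductive step, observe that if $(\mathrm{A}_{k-1})$ holds then the prime decomposition of $M_{k-1}$ consists of the single prime $M_0$, while if $(\mathrm{B}_{k-1})$ holds it is trivial. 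By Theorem~\ref{thm:topeff} (and the topological summary following it), $M_{k-1}$ is, up to diffeomorphism, a disjoint union of connected sums of components of $M_k$ with copies of $\R P^3$ and $S^2\times S^1$, together with finitely many spherical space forms, copies of $\R P^3\sharp\,\R P^3$ and copies of $S^2\times S^1$. Comparing prime decompositions: in the $(\mathrm{B}_{k-1})$ case each of these pieces must be $\cong S^3$, so in particular every component of $M_k$ is $\cong S^3$, which is $(\mathrm{B}_k)$; in the $(\mathrm{A}_{k-1})$ case the union of the prime decompositions of the components of $M_k$ is either empty or a single copy of $M_0$, so either exactly one component of $M_k$ is $\cong M_0$ and the rest are $\cong S^3$ (which is $(\mathrm{A}_k)$) or all components of $M_k$ are $\cong S^3$ (which is $(\mathrm{B}_k)$).

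From the dichotomy, part~(i) and the topological part of~(ii) follow immediately. If $M_0\cong S^3$ then $(\mathrm{B}_k)$ holds for all $k$; and for $k\ge1$ the right--hand side of Theorem~\ref{thm:topeff} can contain no $\R P^3$ in step~(i) and no $\R P^3\sharp\,\R P^3$, no $S^2\times S^1$ and no non-$S^3$ spherical space form in step~(iii), since each would contribute a nontrivial prime summand to the union of $S^3$'s $M_{k-1}$; hence $\rho_{k-1}$ arises from $\rho_k$ by forming an equivariant connected sum and then taking a disjoint union with a standard action on a finite union of $S^3$'s (with $M_{k_{\max}}$ possibly empty). If $M_0\not\cong S^3$ then $(\mathrm{A}_0)$ holds, and since $(\mathrm{B}_{k-1})$ implies $(\mathrm{B}_k)$ the set of $k$ for which $M_k$ has a non-$S^3$ component is an initial segment; letting $k_0$ be its supremum, we obtain $(\mathrm{A}_k)$ for $0\le k\le k_0$ and $(\mathrm{B}_k)$ for $k_0<k\le k_{\max}$.

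It remains to describe the actions in part~(ii), and here one must pass from the \emph{global} statement of Theorem~\ref{thm:topeff} to the single component $M_{k-1}^{(0)}\cong M_0$ --- this is the one step that requires genuine care. Being the unique non-$S^3$ component of $M_{k-1}$, the manifold $M_{k-1}^{(0)}$ is $\rho_{k-1}$--invariant, and likewise $M_k^{(0)}$ is $\rho_k$--invariant. For $1\le k\le k_0$ the component $M_{k-1}^{(0)}$ cannot go extinct at $t_k$: otherwise, being $\cong M_0$ irreducible, it would be a spherical space form, and then $M_k$ would have no way to acquire a non-$S^3$ component, contradicting $(\mathrm{A}_k)$. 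Hence $M_{k-1}^{(0)}$ is produced through steps~(i)--(ii) of Theorem~\ref{thm:topeff}, so it corresponds to a single connected component $\Gamma_0$ of the graph of the equivariant connected sum, and $\Gamma_0$ is $G$--invariant because $M_{k-1}^{(0)}$ is. Since $M_{k-1}^{(0)}\cong M_0$ is prime and $\not\cong S^2\times S^1$, the graph $\Gamma_0$ must be a tree (a loop or cycle would create an $S^2\times S^1$ summand), so $M_{k-1}^{(0)}$ is the connected sum of the manifolds attached to its vertices; primality then forces that none of these vertices is an $\R P^3$ added in step~(i), and that $M_k^{(0)}$ occurs among them. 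Thus the vertices of $\Gamma_0$ are $M_k^{(0)}$ together with some $S^3$--components of $M_k$; their union $M_k'$ is $\rho_k$--invariant, and $\rho_{k-1}|_{M_{k-1}^{(0)}}$ is the equivariant connected sum of $\rho_k|_{M_k'}$, as claimed.

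Finally, suppose $k_0<k_{\max}$ and consider the transition from $\rho_{k_0+1}$ to $\rho_{k_0}$, where $M_{k_0}^{(0)}\cong M_0$ while $M_{k_0+1}$ is a union of $S^3$'s. If $M_{k_0}^{(0)}$ goes extinct at $t_{k_0+1}$, then being $\cong M_0$ irreducible it is a spherical space form, and $\rho_{k_0}|_{M_{k_0}^{(0)}}$ is standard by Theorem~\ref{thm:ext}. If it does not go extinct, the tree analysis above (applied to the component $M_{k_0}^{(0)}$) shows that $M_{k_0}^{(0)}$ is the connected sum of some $S^3$--components of $M_{k_0+1}$ with exactly one copy of $\R P^3$ added in step~(i); by primality this forces $M_0\cong\R P^3$, and $\rho_{k_0}|_{M_{k_0}^{(0)}}$ is the equivariant connected sum of a standard action on that $\R P^3$ with an action on a finite union of $S^3$'s. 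All remaining verifications are routine bookkeeping with prime decompositions.
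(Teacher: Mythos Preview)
Your proof is correct and follows essentially the same route as the paper: both arguments feed Theorem~\ref{thm:topeff} into the uniqueness of the prime decomposition to obtain the dichotomy on the components of $M_k$, and then read off the statements about the actions. Your treatment is in fact more explicit than the paper's at one point: where the paper simply asserts that for $1\le k\le k_0$ step~(i) of Theorem~\ref{thm:topeff} is empty and that ``our assertion for $\rho_{k-1}|_{M_{k-1}^{(0)}}$ follows'', you unpack this by isolating the graph component $\Gamma_0$ corresponding to $M_{k-1}^{(0)}$, showing it is a $G$--invariant tree, and verifying that $M_k^{(0)}$ must lie among its vertices while no step~(i) $\R P^3$ can---this is the right way to pass from the global statement of Theorem~\ref{thm:topeff} to a single component and is worth keeping.
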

\begin{proof}
According to Theorem~\ref{thm:topeff}, 
$M_0$ is for every $k$ 
the connected sum of the components of $M_k$ 
and possibly further closed orientable $3$--manifolds
(spherical space forms and copies of $S^2\times S^1$). 
Since $M_0$ is irreducible,
$M_k$ can have at most one component $M_k^{(0)}\not\cong S^3$,
and this component must itself be irreducible. 
If $M_k$ contains such a component,
then so does $M_l$ for $0\leq l\leq k$,
and we have that 
$M_k^{(0)}\cong M_{k-1}^{(0)}\cong\dots\cong M_0^{(0)}=M_0$. 
Let $k_0$, $-1\leq k_0\leq k_{\max}$,
be maximal 
such that $M_k$ has such a component $M_k^{(0)}$ for $0\leq k\leq k_0$. 

(i) 
Here $k_0=-1$
and all components of the $M_k$ are 3--spheres. 
Step (i) in Theorem~\ref{thm:topeff} must be empty,
and the components of step (iii) can only be 3--spheres. 

(ii)
Now $k_0\geq0$.
If $1\leq k\leq k_0$,
then again step (i) in Theorem~\ref{thm:topeff} must be empty,
and the components of step (iii) can only be 3--spheres. 
That is, 
$\rho_{k-1}$ arises from $\rho_k$
by first taking an equivariant connected sum
and then taking the disjoint union with a standard action 
on a finite union of 3--spheres. 
Our assertion for $\rho_{k-1}|_{M_{k-1}^{(0)}}$ follows.

By Theorem~\ref{thm:topeff}, 
a component of $M_k$ which does not go extinct at time $t_{k+1}$
decomposes as the connected sum 
(in the usual non-equivariant sense) 
of some components of $M_{k+1}$ 
and, possibly, copies of $\R P^3$ and $S^2\times S^1$. 
In our situation, 
if $k_0<k_{\max}$
and $M_{k_0}^{(0)}$ does not go extinct at time $t_{k_0+1}$,
then $M_{k_0}^{(0)}$ must be diffeomorphic to $\R P^3$, 
because it is irreducible and $M_{k_0+1}$ is a union of 3--spheres. 
If $M_{k_0}^{(0)}$ goes extinct at time $t_{k_0+1}$,
then it must be a spherical space form,
because it is irreducible,
compare the first paragraph of section~\ref{sec:befext}. 
The claim concerning $\rho_{k_0}|_{M_{k_0}^{(0)}}$ 
follows from Theorem~\ref{thm:topeff}, respectively,
from Theorem~\ref{thm:ext}. 
\end{proof}

Now we use the deep fact 
that on a connected closed orientable 3--manifold with finite fundamental group 
the Ricci flow with cutoff 
goes extinct for any initial metric, see
Perelman \cite{Perelman_extinction}, Colding--Minicozzi
\cite{ColdingMinicozzi_ext}, and Morgan--Tian \cite{MorganTian_poinc}.
This rules out non-standard actions on 3--spheres 
and leads to a substantial strengthening 
of the conclusion of the previous corollary. 
 
\begin{cor}
\label{cor:uniquecomp2}
Suppose that the orientable closed 3--manifold $M_0$ 
is connected and irreducible.

(i) 
If $\pi_1(M_0)$ is finite, 
then the Ricci flow ${\mathcal M}$ 
goes extinct after finite time 
and $M_0$ is diffeomorphic to a spherical space form. 
The initial action $\rho_0\co G\acts M_0$ is standard. 

(ii)
If $\pi_1(M_0)$ is infinite, 
then the Ricci flow ${\mathcal M}$
does not go extinct after finite time. 
Every manifold $M_k$ has a unique 
connected component $M_k^{(0)}\cong M_0$, 
and the other components are $\cong S^3$. 
The action $\rho_k|_{M_k^{(0)}}\co G\acts M_k^{(0)}$ 
is smoothly conjugate to the initial action 
$\rho_0\co G\acts M_0$. 
\end{cor}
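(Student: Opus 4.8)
The plan is to feed the structural description of Corollary~\ref{cor:uniquecomp1} into Proposition~\ref{decoirred} and Theorem~\ref{thm:ext}, the one genuinely new ingredient being the finite time extinction theorem for closed $3$--manifolds with finite fundamental group. Which of the two alternatives (i), (ii) occurs is governed by the finiteness of $\pi_1(M_0)$, and both rest on an auxiliary claim that I would establish first: \emph{if $P$ is a connected component of some time slice $M_k$ with $P\cong S^3$, then $\rho_k$ is standard on the $G$--orbit of $P$}. Granting this claim with $k=0$, part~(i) is immediate when $M_0\cong S^3$.

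To prove the claim, observe first that surgery is local to each component, so that for every later time the descendants of the $G$--orbit of $P$ form a $G$--invariant union of connected components; running time forward an $S^3$--component can only split into further $S^3$'s (the connected-sum decomposition of $S^3$ being trivial and no $\R P^3$ or $S^2\times S^1$ summand being removable from it) and nothing merges, so all these descendants are $\cong S^3$. Applying the extinction theorem of Perelman, Colding--Minicozzi and Morgan--Tian to this subfamily, all its members go extinct after finitely many surgery times, and by Theorem~\ref{thm:ext} the action is standard on each of them at its extinction time. A downward induction over the surgery times then finishes the claim: assuming $\rho_j$ standard on the descendants of $P$ present in $M_j$, the reconstruction of $\rho_{j-1}$ from $\rho_j$ via Theorem~\ref{thm:topeff} has empty step~(i) contribution (no $\R P^3$'s, since the reconstructed components are $\cong S^3$), its step~(iii) pieces can only be $S^3$'s and are therefore standard by Theorem~\ref{thm:ext}, and the equivariant connected sum of step~(ii) is standard by Proposition~\ref{decoirred}(i).

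For part~(i) in general, the extinction theorem gives $k_{\max}<\infty$ and $M_{k_{\max}}=\emptyset$; if $M_0\not\cong S^3$, then $k_0<k_{\max}$ in Corollary~\ref{cor:uniquecomp1}(ii), so $M_0\cong M_{k_0}^{(0)}$ either is $\cong\R P^3$ or goes extinct as a spherical space form, whence $M_0$ is a spherical space form in every case. For the action, if $M_0\cong S^3$ the claim already applies; otherwise $M_0$ is irreducible and not $\cong S^3$, and I would chain the conjugacies $\rho_0=\rho_0|_{M_0^{(0)}}\cong\rho_1|_{M_1^{(0)}}\cong\dots\cong\rho_{k_0}|_{M_{k_0}^{(0)}}$, each step being an equivariant connected sum (Corollary~\ref{cor:uniquecomp1}(ii)) of $\rho_k|_{M_k^{(0)}}$ with $S^3$--components that are standard by the claim, so that Proposition~\ref{decoirred}(ii)---valid since $M_{k-1}^{(0)}\cong M_0$ is irreducible and not $\cong S^3$---yields the conjugacy. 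Finally $\rho_{k_0}|_{M_{k_0}^{(0)}}$ is standard: either $M_{k_0}^{(0)}$ goes extinct and this is Theorem~\ref{thm:ext}, or $M_{k_0}^{(0)}\cong\R P^3$ is an equivariant connected sum of a standard $\R P^3$--action with (by the claim) standard $S^3$--actions, which Proposition~\ref{decoirred}(ii) identifies with the standard $\R P^3$--action.

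For part~(ii), $\pi_1(M_0)$ infinite forces the irreducible $M_0$ to be aspherical, hence not a spherical space form and not $\cong\R P^3$; by Corollary~\ref{cor:uniquecomp1}(ii) this is only possible if $k_0=k_{\max}$, so the flow does not go extinct after finite time and every $M_k$ has a unique component $M_k^{(0)}\cong M_0$ with all others $\cong S^3$. For each $1\le k\le k_{\max}$, Corollary~\ref{cor:uniquecomp1}(ii) exhibits $\rho_{k-1}|_{M_{k-1}^{(0)}}$ as an equivariant connected sum of $\rho_k|_{M_k^{(0)}}$ with some $S^3$--components, standard by the claim, so Proposition~\ref{decoirred}(ii) (again using $M_{k-1}^{(0)}\cong M_0$ irreducible, not $\cong S^3$) gives $\rho_{k-1}|_{M_{k-1}^{(0)}}\cong\rho_k|_{M_k^{(0)}}$; chaining from $k$ down to $0$ yields $\rho_k|_{M_k^{(0)}}\cong\rho_0$. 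I expect the main obstacle to be exactly this auxiliary claim: one must apply the finite time extinction theorem to a $G$--invariant subfamily of components rather than to the whole (in case (ii) non-extinguishing) flow, and then, at each step of the downward induction, verify that only $S^3$'s carrying already-standard actions get adjoined so that step~(i) of Theorem~\ref{thm:topeff} never contributes---after which the rest is bookkeeping on top of Corollary~\ref{cor:uniquecomp1}.
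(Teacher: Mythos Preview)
Your proof is correct and follows essentially the same approach as the paper. The one organizational difference is in how the $S^3$ case is handled: the paper first proves the case $M_0\cong S^3$ as a self-contained statement (backward induction from $M_{k_{\max}}=\emptyset$ using Corollary~\ref{cor:uniquecomp1}(i) and Proposition~\ref{decoirred}(i)), and then invokes this abstractly---``since we now know that actions on unions of 3--spheres are standard''---whenever $S^3$ components appear in the general case. You instead formulate an auxiliary claim about $S^3$ components \emph{inside the given flow} and prove it by restricting to their $G$--invariant subfamily and running the same backward induction there. Both routes are valid; the paper's is marginally cleaner because once the $S^3$ case of Theorem~E is established, one may apply it to any $S^3$ component of any $M_k$ without tracking descendants through the original flow---one simply starts a fresh equivariant Ricci flow from that component with any invariant metric. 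Your concern about applying the extinction theorem to a subfamily rather than the whole flow is legitimate and resolvable exactly as you say (surgery being local to components), but the paper's abstract formulation sidesteps it entirely. The remaining chaining via Proposition~\ref{decoirred}(ii) and the dichotomy on $k_0$ versus $k_{\max}$ match the paper's argument.
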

\begin{proof}
If $k_0<k_{\max}$,
then $M_0\cong {M_{k_0}^{(0)}}$ is a spherical space form 
by Corollary~\ref{cor:uniquecomp1}.
On the other hand, 
if $\pi_1(M_0)$ is finite,
then the Ricci flow ${\mathcal M}$ goes extinct in finite time 
and $k_0<k_{\max}$. 
Thus $k_0<k_{\max}$ if and only if $\pi_1(M_0)$ is finite
if and only if the Ricci flow ${\mathcal M}$ goes extinct in finite time. 

(i)
If $M_0\cong S^3$,
then Corollary~\ref{cor:uniquecomp1} (i)
and Proposition~\ref{decoirred} (i) 
imply that $\rho_{k-1}$ is standard if $\rho_k$ is standard. 
Moreover, 
$M_{k_{\max}}=\emptyset$ and thus $\rho_{k_{\max}}$ is standard.
It follows that $\rho_0$ is standard.

If $M_0$ is a spherical space form with nontrivial fundamental group,
then $0\leq k_0<k_{\max}$.
Since we now know that actions on unions of 3--spheres are standard,
Corollary~\ref{cor:uniquecomp1} (ii)
and Proposition~\ref{decoirred} yield
that $\rho_{k_0}$ is standard.
Furthermore, 
$\rho_k|_{M_k^{(0)}}$ is smoothly conjugate to 
$\rho_{k-1}|_{M_{k-1}^{(0)}}$ 
for $1\leq k\leq k_0$.
Hence $\rho_0$ is standard. 

(ii)
Now $k_0=k_{\max}$. 
As in case (i), 
Corollary~\ref{cor:uniquecomp1} (ii)  
and Proposition~\ref{decoirred} yield
that $\rho_k|_{M_k^{(0)}}$ is smoothly conjugate to 
$\rho_{k-1}|_{M_{k-1}^{(0)}}$ 
for $1\leq k\leq k_{\max}$. 
\end{proof}

\subsection{Applications to actions 
on elliptic and hyperbolic 3--manifolds}
\label{sec:appellhyp}

We prove now our main results. 

\begin{thmE}[Actions on elliptic manifolds are standard] 
Any smooth action by a finite group on an elliptic
3--manifold is smoothly conjugate to an isometric action. 
\end{thmE}

\begin{proof}
Let $\rho_0\co G\acts M_0$ 
be a smooth action by a finite group on a connected elliptic 3--manifold. 
We recall that elliptic 3--manifolds are orientable.
There exists an equivariant Ricci flow with cutoff
$\rho\co G\acts{\mathcal M}$
such that $\rho_0$ is the given action. 
(The $\rho_0$--invariant initial Riemannian metric $g_0$ 
is different from the a priori given spherical metric on $M_0$ 
unless the latter is already $\rho_0$--invariant 
in which case there is nothing to prove.) 
By Corollary~\ref{cor:uniquecomp2} (i),
the action $\rho_0$ is standard,
i.e.\ there exists a $\rho_0$--invariant spherical metric $g_{\sph}$ on $M_0$. 

As mentioned in section~\ref{defstan} already, 
any two diffeomorphic elliptic 3--manifolds are isometric.
Thus,
if $g'_{\sph}$ is an a priori given spherical metric on $M_0$,
then there exists a diffeomorphism $c$ of $M_0$ 
such that $c^*g_{\sph}=g'_{\sph}$. 
Then the action $c^*\rho_0$ is isometric with respect to $g'_{\sph}$.
\end{proof}

\begin{thmH}[Actions on closed hyperbolic manifolds are standard]
Any smooth action by a finite group on a closed hyperbolic 3--manifold
is smoothly conjugate to an isometric action.
\end{thmH}

\begin{proof}
Let $\rho_0\co G\acts M_0$ 
be a smooth action by a finite group 
on a closed connected hyperbolic 3--manifold. 

For a hyperbolic metric on $M_0$ 
there exists by Mostow rigidity \cite{Mostow} 
a unique isometric action $\rho_{\isom}\co G\acts M_0$
such that $\rho_0(\ga)$ is {\em homotopic} to $\rho_{\isom}(\ga)$ 
for every $\ga\in G$. 
We need to show that the actions $\rho_0$ and $\rho_{\isom}$ 
are smoothly {\em conjugate}.

We assume first that $M_0$ is orientable. 
There exists an equivariant Ricci flow with cutoff 
$\rho\co G\acts{\mathcal M}$
such that $\rho_0$ is the given action. 
By Corollary~\ref{cor:uniquecomp2} (ii),
${\mathcal M}$ does not go extinct in finite time, 
every manifold $M_k$ has a unique component $M_k^{(0)}\cong M_0$ 
and $\rho_k|_{M_k^{(0)}}$ 
is smoothly conjugate to $\rho_0$. 

We will now use the analysis of the 
long time behavior of the Ricci flow with cutoff, 
see \cite[Sections 87--92]{KL}. 
Since $M_0$ is not a graph manifold,
the thick parts of the time slices ${\mathcal M}_t$ 
cannot be empty for large $t$,
cf.\ \cite[Sections 89 and 92]{KL} and \cite[Theorem 0.2]{MorganTian_geom}. 
Therefore the collection of complete finite-volume hyperbolic 3--manifolds, 
which approximate the thick parts of the ${\mathcal M}_t$ 
as described in \cite[Proposition 90.1]{KL},
is nonempty. 
Furthermore, 
it can only consist of {\em one closed} connected hyperbolic 3--manifold $H$. 
This follows from the $\pi_1$--injectivity of the approximating maps,
compare \cite[Proposition 91.2]{KL}, 
and one uses, that 
$M_k-M_k^{(0)}$ is a union of 3--spheres 
and $M_k^{(0)}$ contains no incompressible 2--torus.

Let $T_0<+\infty$ 
and the nonincreasing function $\al\co [T_0,\infty)\to(0,\infty)$ 
with $\lim_{t\to\infty}\al(t)=0$ 
be as in \cite[Proposition 90.1]{KL}.
Since $H$ is closed, 
the conclusion of \cite[Proposition 90.1]{KL} yields: 
There exists $T_1\in[T_0,\infty)$
such that for any time $t\geq T_1$
there is an $\al(t)$--homothetic embedding 
\begin{equation*}
f_t\co H\to{\mathcal M}_t
\end{equation*}
which is a diffeomorphism 
onto a connected component of ${\mathcal M}_t$. 
Moreover, $f_t$ depends smoothly on $t$.
Note that the image of $f_t$ avoids the regions where surgeries take place,
because on it the scalar curvature is negative.  
Thus, 
if $t=t_k\geq T_1$ is a singular time,
then $\mathrm{im}(f_t)$ is a closed component of 
${\mathcal M}_t^-\cap{\mathcal M}_t^+$
which is not affected by surgeries.

All other components of the time slices ${\mathcal M}_t$, $t\geq T_1$,
are 3--spheres. 
These go extinct in finite time. 
Hence there exists $T_2\in[T_1,\infty)$
such that $f_t$ is a diffeomorphism onto ${\mathcal M}_t$ for $t\geq T_2$. 
We conclude that, up to scaling,
${\mathcal M}_t$ converges smoothly to $H$.
More precisely,
one has
\begin{equation*}
\frac{1}{2t}\,f_t^*g_t\buildrel{\mathcal C}^{\infty}\over\lra g_H
\end{equation*}
as $t\to\infty$,
compare part 1 of \cite[Proposition 90.1]{KL},
where we normalize the hyperbolic metric $g_H$ on $H$
to have sectional curvature $\equiv-1$. 

Let us denote $\rho_t:=\rho|_{{\mathcal M}_t}$. 
The pulled-back actions $f_t^*\rho_t$ on $H$, $t\geq T_2$, 
are smoothly conjugate to each other. 
Since they also become increasingly isometric
($\al(t)\to0$),
Arzel\`a-Ascoli implies that 
for any sequence $(t_n)$, $T_2\leq t_n\nearrow\infty$, 
the actions $f_{t_n}^*\rho_{t_n}$ 
subconverge smoothly to an isometric action $\bar\rho\co G\acts H$. 
(It must coincide with the unique isometric action 
homotopic to the $f_t^*\rho_t$ given by Mostow rigidity, 
compare our remark at the beginning of the proof.)
Thus for large $n$ the action $f_{t_n}^*\rho_{t_n}$ 
is a ${\cal C}^{\infty}$--small perturbation 
of the isometric limit action. 
Using the stability property of smooth actions 
(of compact Lie groups on closed manifolds) 
that sufficiently ${\cal C}^1$--small perturbations 
are smoothly conjugate (see Palais \cite{Palais} and Grove--Karcher
\cite{GroveKarcher}),  
it follows that the $f_t^*\rho_t$ are 
{\em smoothly conjugate} to $\bar\rho$ for $t\geq T_2$.
With Corollary~\ref{cor:uniquecomp2} (ii)
we conclude that 
$\rho_0\cong\rho_t\cong f_t^*\rho_t\cong\bar\rho$,
i.e.\ there exists a $\rho_0$--invariant hyperbolic metric $g_{\hyp}$ on $M_0$. 

Suppose now that $M_0$ is not orientable
and consider the orientable double covering $\hat M_0\to M_0$. 
Then $M_0$ is the quotient of $\hat M_0$ 
by a smooth orientation reversing involution $\iota$.
The action $\rho_0$ lifts to an action 
$\hat\rho_0\co \hat G\acts\hat M_0$ 
of an index two extension $\hat G$ of $G$. 
The nontrivial element in the kernel of the natural projection 
$\hat G\twoheadrightarrow G$ 
is $\iota$. 
By the above,
there exists a $\hat\rho_0$--invariant hyperbolic metric $\hat g_{\hyp}$
on $\hat M_0$. 
This metric descends to a $\rho_0$--invariant hyperbolic metric $g_{\hyp}$
on $M_0$. 
This finishes the proof that the action $\rho_0$ is geometric. 

Finally, 
if $g'_{\hyp}$ is an a priori given hyperbolic metric on $M_0$
(unrelated to the action $\rho_0$),
then Mostow rigidity yields a diffeomorphism $c$ of $M_0$ 
such that $c^*g_{\hyp}=g'_{\hyp}$. 
Thus the action $\rho_0$ can be smoothly conjugated 
to the $g'_{\hyp}$--isometric action $c^*\rho_0$. 
\end{proof}

The argument for proving Theorem H
can be extended to the case of 
actions on hyperbolic 3--manifolds with cusps. 
Let $M$ be a compact 3--manifold with nonempty boundary 
which admits a hyperbolic structure,
i.e.\ whose interior admits 
a complete hyperbolic metric $g_{\hyp}$ with finite volume. 
Then the ends of $(\Int(M),g_{\hyp})$ are cusps 
and the boundary components of $M$ are tori or Klein bottles. 

\begin{thm}[Actions on hyperbolic manifolds with cusps are standard]
\label{cuspcase}
Any smooth action $\rho\co G\acts M$ by a finite group 
is smoothly conjugate 
to the restriction of an isometric action on $(\Int(M),g_{\hyp})$ 
to a compact submanifold obtained from truncating the cusps. 
\end{thm}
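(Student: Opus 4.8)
The plan is to reduce this to the closed case already treated in Theorem~H, by applying the equivariant Ricci flow with cutoff not to a hyperbolic manifold but to a closed manifold whose geometric decomposition contains $\Int(M)$ among its hyperbolic pieces. First I would dispose of non--orientability exactly as in the proof of Theorem~H: if $M$ is non--orientable, pass to the orientable double covering $\hat M\to M$, whose boundary consists of tori and whose interior carries a complete finite--volume hyperbolic metric; the action $\rho$ lifts to an action $\hat\rho\co\hat G\acts\hat M$ of an index two extension $\hat G$ of $G$ whose nontrivial deck element lies in the kernel of $\hat G\twoheadrightarrow G$. A $\hat\rho$--invariant hyperbolic metric on $\Int(\hat M)$, respectively a $\hat\rho$--invariant compact core of $\hat M$, descends to a $\rho$--invariant one on $\Int(M)$, respectively to a compact core of $M$; so it suffices to treat the case where $M$ is orientable, in which case $\partial M$ is a disjoint union of tori.

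Form the double $N:=M\cup_{\partial M}\bar M$. The given $G$--action together with the reflection $\sigma$ interchanging the two copies of $M$ generates a smooth action $\hat\rho\co\hat G\acts N$ with $\hat G=\rho(G)\times\langle\sigma\rangle$, and $M$ is the closure of one component of $N\setminus\Fix(\sigma)$. Since $M$ is irreducible and $\partial$--irreducible with incompressible torus boundary and $\Int(M)$ is atoroidal, $N$ is orientable, irreducible, has infinite fundamental group and is not a graph manifold; its geometric ($=$ JSJ) decomposition cuts $N$ along the $\sigma$--invariant family of doubling tori $\partial M$ into exactly two hyperbolic pieces, both isometric to $(\Int(M),g_{\hyp})$ and interchanged by $\sigma$. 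Choosing a $\hat\rho$--invariant initial metric and running the equivariant Ricci flow with cutoff on $N$ (section~\ref{sec:ex}), Corollary~\ref{cor:uniquecomp2}(ii) shows that the flow never goes extinct, that every time slice $M_k$ has a unique component $M_k^{(0)}\cong N$ with all other components $\cong S^3$, and that $\rho_k|_{M_k^{(0)}}$ is smoothly conjugate to $\hat\rho$.

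Next I would repeat the long--time argument from the proof of Theorem~H, using the analysis of the asymptotic behaviour of the Ricci flow with cutoff in \cite[Sections~87--92]{KL} (see also \cite{MorganTian_geom}). Since $N$ is not a graph manifold the thick part of ${\mathcal M}_t$ is nonempty for large $t$ and, after rescaling by $\frac{1}{2t}$, is arbitrarily ${\mathcal C}^{\infty}$--close to the thick parts of the hyperbolic pieces of the geometric decomposition of $N$; by the $\pi_1$--injectivity of the approximating maps and the uniqueness of the geometric decomposition this collection consists precisely of two copies of $(\Int(M),g_{\hyp})$, interchanged by the isometric symmetry $\hat\rho(\sigma)$ and each preserved by $\hat\rho(G)$. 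Pulling $\hat\rho|_{{\mathcal M}_t}$ back along the almost isometric embeddings $f_t$ yields, for large $t$, actions on this disjoint union of truncated hyperbolic pieces which are mutually smoothly conjugate and increasingly isometric; by Arzel\`a--Ascoli they subconverge to an isometric action $\bar\rho\co\hat G\acts\Int(M)\sqcup\Int(M)$ (extending over the cusps), and by the stability of smooth actions under ${\mathcal C}^1$--small perturbations (Palais \cite{Palais}, Grove--Karcher \cite{GroveKarcher}) the pulled--back actions are in fact smoothly conjugate to $\bar\rho$ for $t$ large. Combining this with Corollary~\ref{cor:uniquecomp2}(ii), the $\hat\rho$--action on the thick part of a late time slice is smoothly conjugate to the restriction of $\bar\rho$ to a compact horoball--truncation of $\Int(M)\sqcup\Int(M)$, and this thick part is, up to conjugacy, a submanifold of $N$ meeting each of the copies $M$ and $\bar M$ in a compact core. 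Restricting to the component contained in the copy $M\subset N$ and to the subgroup $G$, one obtains that $\rho$, restricted to a compact core of $M$, is smoothly conjugate to the restriction to a cusp truncation of the isometric action $\bar\rho|_G\co G\acts(\Int(M),g_{\hyp})$; since any two compact cores of $M$ are $\rho$--equivariantly isotopic, the same holds for $\rho$ on all of $M$, and if an a priori hyperbolic structure on $\Int(M)$ is prescribed one conjugates by the diffeomorphism supplied by Mostow rigidity \cite{Mostow}.

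The main obstacle I expect is matching the geometric decomposition of the non--atoroidal manifold $N$ with the output of the thick--thin decomposition of the Ricci flow: one must know that exactly two hyperbolic pieces occur, that each is isometric to $(\Int(M),g_{\hyp})$, and that the whole configuration, including the reflection $\sigma$, is respected. This rests on the finer results of \cite[Sections~87--92]{KL} --- that the incompressible tori bounding the graph--manifold thin part realise the JSJ tori --- together with the elementary topological fact that the JSJ decomposition of the double of a cusped hyperbolic manifold is the union of two copies of it along the doubling tori. A more routine secondary point is the passage from a statement about a compact core of $M$ to one about all of $M$ via an equivariant isotopy of compact cores.
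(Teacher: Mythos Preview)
Your approach is essentially the same as the paper's: double $M$, run the equivariant Ricci flow with cutoff on the double, and use the long--time behaviour to extract an isometric $G$--action on the hyperbolic piece. The paper also reduces to the orientable case first and uses Corollary~\ref{cor:uniquecomp2}(ii), Arzel\`a--Ascoli, and Grove--Karcher in the same way.

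There are two places where your sketch is thinner than the paper's argument and where a referee would ask for more. First, the passage ``each preserved by $\hat\rho(G)$'': you note correctly that this is the main obstacle, but you do not say how to promote invariance \emph{up to isotopy} (which is what the JSJ uniqueness gives) to actual invariance of the images $f_t(H_\beta)$ for large $t$. The paper does this with an injectivity--radius argument: for $\beta_1<\beta_2$, the translate $\hat\rho_t(\hat\ga)(f_t(H_{\beta_2}))$ must be disjoint from $f_t(\partial H_{\beta_1})$ because the latter has strictly smaller injectivity radius, and connectedness then forces containment or disjointness. This is what lets one pull the action back along $f_t$ on an invariant open set.

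Second, the sentence ``since any two compact cores of $M$ are $\rho$--equivariantly isotopic'' is doing real work and is not a standard fact you can quote. What you actually need is that the $\rho$--action on the complementary collars $M\setminus\psi(\Int(H_\beta))\cong\bigsqcup T^2\times[0,1]$ is a product action; the paper obtains this from Meeks--Scott \cite[Theorem~8.1]{MeeksScott}. Without that input you only know $\rho$ is standard on \emph{some} compact core, not on all of $M$. Once Meeks--Scott is invoked, the equivariant isotopy of compact cores follows, but it is the nontrivial ingredient, not the conclusion.

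Finally, for the identification of $H$ with $(\Int(M),g_{\hyp})$ you should cite Mostow--Prasad rigidity \cite{Mostow,Prasad} rather than just Mostow, since $\Int(M)$ has cusps.
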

\begin{proof}
As in the case of Theorem H
the non-orientable case can be reduced to the orientable one. 
We assume therefore that $M$ is orientable and connected. 

We denote by $\hat M_0$ the closed 3--manifold
obtained from doubling $M$ along the boundary. 
The action $\rho$ generates together with 
the natural involution $\iota\co \hat M_0\to\hat M_0$ 
the action $\hat\rho_0\co \hat G= G\times\Z/2\Z\acts\hat M_0$.
(We will identify $G$ with the subgroup $G\times\{0\}\subset\hat G$.) 

Let $\hat\rho\co \hat G\acts(\hat{\cal M},(\hat g_t))$ 
be an equivariant Ricci flow with cutoff 
whose initial action $\hat\rho_0$ is the given action. 
Since $\hat M_0$ is irreducible and has infinite fundamental group, 
the flow does not go extinct in finite time 
by Corollary~\ref{cor:uniquecomp2}. 
For each $t\geq0$ there is a unique connected component 
$\hat{\cal M}_t^{(0)}$ of $\hat{\cal M}_t^+$
(i.e.\ of $\hat{\cal M}_t$ if $t$ is a regular time)
which is 
$(\hat\rho_t,\hat\rho_0)$--equivariantly diffeomorphic to $\hat M_0$. 
We must take into account the possibility 
that surgeries occur arbitrarily late. 

Again $\hat M_0$ is not a graph manifold 
and hyperbolic components must form 
in the thick part of the time slice $\hat{\cal M}_t$. 
Since they are incompressible, 
they can only appear in the component $\hat{\cal M}_t^{(0)}\cong\hat M_0$. 

We consider one of these hyperbolic components.
To adapt the formulation of \cite[Proposition 90.1]{KL} to our purposes, 
we use the following notation:
Given a complete Riemannian manifold $N$ 
and a real number $r>0$, 
we denote by 
$N_r:=\{x\in N:\inj(x)\geq r\}$
its $r$--thick part in the injectivity radius sense. 
Then according to \cite[Proposition 90.1]{KL} there exist
a number $T_0\geq0$, 
a (continuous) nonincreasing function 
$\beta\co [T_0,\infty)\to(0,\infty)$ with 
$\lim_{t\to\infty}\beta(t)=0$, 
an orientable connected complete noncompact hyperbolic 3--manifold 
$(H,g_H)$ with finite volume,
and a smooth family of $\beta(t)$--homothetic embeddings
\begin{equation*}
f_t\co H_{\beta(t)}\embed \hat{\cal M}_t^{(0)},  \qquad t\geq T_0, 
\end{equation*}
avoiding the surgery regions and such that 
$\frac{1}{2t}\,f_t^*g_t\to {g_H}$ smoothly as $t\to\infty$. 
We may assume that $\beta$ is so small 
that $H_{\beta(t)}$ is a compact codimension-zero submanifold 
(a compact core) of $H$ bounded by horospherical 2--tori. 
(Note that the horospherical cross sections of the cusps of $H$ are tori 
because $H$ is orientable. 
Therefore the injectivity radius is constant along these cross sections.) 
Furthermore, the embeddings $f_t$ are $\pi_1$--injective. 

In view of the topological structure of $\hat M_0$, 
the uniqueness of the torus decomposition 
(see e.g.\ Hatcher \cite[Theorem 1.9]{Hatcher}) 
implies that $\mathrm{im}(f_t)$ corresponds to one of the hyperbolic components 
$M$ and $\iota M$ of $\hat M_0$ 
and is $G$--invariant up to isotopy. 
More precisely, 
for $0<\beta<\beta(T_0)$ and sufficiently large $t\geq T(\beta)$, 
the family of incompressible tori $f_t(\D H_{\beta})$ 
is isotopic to the family of ``separating'' tori 
$\Fix(\hat\rho_t(\iota)) \cap \hat{\cal M}_t^{(0)}$.
Moreover, 
for $\hat\ga\in\hat G$ 
the translate $\hat\rho_t(\hat\ga)(f_t(H_{\beta}))$ 
is isotopic to $f_t(H_{\beta})$ 
if and only if $\hat\ga\in G$. 
Here we use the fact that $M$ and $\iota M$ 
are not isotopic to each other in $\hat M_0$. 

We argue next that the almost hyperbolic components $f_t(H_{\beta})$ 
are essentially $G$--invariant and not only invariant up to isotopy. 

Let $0<\beta_1<\beta_2<\beta(T_0)$. 
Then for $\hat\ga\in\hat G$ and sufficiently large $t\geq T(\beta_1,\beta_2)$
the submanifold $\hat\rho_t(\hat\ga)(f_t(H_{\beta_2}))$ 
is disjoint from the collection of incompressible boundary tori 
$f_t(\D H_{\beta_1})$
because the injectivity radius is strictly smaller along the latter. 
Since $H_{\beta_2}$ is connected,
it follows that $\hat\rho_t(\hat\ga)(f_t(H_{\beta_2}))$ 
is either contained in or disjoint from $f_t(H_{\beta_1})$. 
In the first case $\hat\rho_t(\hat\ga)(f_t(H_{\beta_2}))$ 
is isotopic to $f_t(H_{\beta_1})$
and in the second case to its complement 
in $\hat{\cal M}_t^{(0)}$. 
It follows that 
$\hat\rho_t(\hat\ga)(f_t(H_{\beta_2}))\subset f_t(H_{\beta_1})$
if $\hat\ga\in G$
and 
$\hat\rho_t(\hat\ga)(f_t(H_{\beta_2}))\cap f_t(H_{\beta_2})=\emptyset$
otherwise. 
In particular, 
$\hat\rho_t(\hat\ga)(f_t(H_{\beta_2}))$ does not intersect 
the fixed point set of $\hat\rho_t(\iota)$ 
and is strictly contained in one of the halves of $\hat{\cal M}_t^{(0)}$ 
corresponding to $M$ and $\iota M$. 
As a consequence, 
we have for $0<\beta_1<\beta_2<\beta_3<\beta(T_0)$
and sufficiently large $t\geq T(\beta_1,\beta_2,\beta_3)$ 
that 
$f_t(H_{\beta_3})\subset\hat\rho_t(\ga)(f_t(H_{\beta_2})) 
\subset f_t(H_{\beta_1})$
for all $\ga\in G$. 
We see that, 
modulo the identification $\hat{\cal M}_t^{(0)}\cong\hat M_0$, 
there is an essentially $\rho$--invariant hyperbolic component 
forming inside $M$ and isotopic to $M$. 

This enables us to pull back the actions $\hat\rho_t|_G$ 
to almost isometric $G$--actions on large regions of $H$. 
Indeed, by the above 
there exists a sequence of times $t_n\nearrow\infty$ 
and $\hat\rho_{t_n}(G)$--invariant open subsets $U_n\subset \mathrm{im}(f_{t_n})$ 
with $f_{t_n}^{-1}(U_n)\nearrow H$. 
The pulled-back actions  
$f_{t_n}^*(\hat\rho_{t_n}|_G)$
are defined (at least) on $f_{t_n}^{-1}(U_n)$ 
and become more and more isometric. 
As in the proof of Theorem H, 
it follows using Arzel\`a-Ascoli 
that after passing to a subsequence
the actions 
$f_{t_n}^*(\hat\rho_{t_n}|_G)$ 
converge smoothly (and locally uniformly) to an isometric action 
$\bar\rho\co G\acts H$, 
i.e.\ for large $n$ the action 
$f_{t_n}^*(\hat\rho_{t_n}|_G)$ 
is a ${\cal C}^{\infty}$--small perturbation of $\bar\rho$. 
Applying Grove--Karcher \cite{GroveKarcher} again, 
there exist $f_{t_n}^*(\hat\rho_{t_n}|_G)$--invariant 
open subsets  
$V_n\subset f_{t_n}^{-1}(U_n)$ with $V_n\nearrow H$
and smooth embeddings 
$c_n\co V_n\to H$ with $c_n\to \id_H$ 
such that 
$\bar\rho(\ga)\circ c_n=c_n\circ f_{t_n}^*(\hat\rho_{t_n}(\ga))$
on $V_n$ for $\ga\in G$, 
compare the end of the proof of Lemma~\ref{equivarcaps}. 
In particular,
we obtain that for small $\beta\in(0,\beta(T_0))$ 
and sufficiently large $n\geq n(\beta)$ 
the $G$--actions 
$f_{t_n}^*\hat\rho_{t_n}|_{c_n^{-1}(H_{\beta})}$
and $\bar\rho|_{H_{\beta}}$ are smoothly conjugate. 
This implies that there exists a 
$(\bar\rho,\rho)$--equivariant embedding 
$\psi\co H_{\beta}\embed \Int(M)$. 

The complementary region $M-\psi(\Int(H_{\beta}))$ 
is a union of copies of $T^2\times[0,1]$. 
The restriction of the action $\rho$ to these components is standard 
according to Meeks and Scott \cite[Theorem 8.1]{MeeksScott}.
(A smooth finite group action on $\Si\times[0,1]$,
$\Si$ a closed surface,
is conjugate to a product action.)
We therefore can modify $\psi$ 
to be a $(\bar\rho,\rho)$--equivariant diffeomorphism 
$H_{\beta}\to \Int(M)$. 

The hyperbolic manifolds $(H,g_H)$ and $(\Int(M),g_{\hyp})$ 
are isometric
by Mostow-Prasad rigidity \cite{Mostow,Prasad}. 
This completes the proof of the theorem. 
\end{proof}

\subsection{Actions on $S^2\times\R$--manifolds} 
\label{sec:s2s1} 

Let $\rho\co G\acts{\mathcal M}$ be an equivariant Ricci flow with cutoff 
and suppose that $M_0$ is a closed connected 3--manifold 
admitting an $S^2\times\R$--structure,
that is a Riemannian metric locally isometric to $S^2\times\R$. 
Also in this case, the Ricci flow goes extinct in finite time 
for any initial metric, see Perelman
\cite{Perelman_extinction}, Colding--Minicozzi \cite{ColdingMinicozzi_ext}.

\begin{thm}\label{thm:s2s1}
The initial action $\rho_0\co G\acts M_0$ is standard.
\end{thm}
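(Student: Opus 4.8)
The plan is to reduce everything to the case $M_0\cong S^2\times S^1$ and then to run the equivariant flow backwards, combining Theorem~\ref{thm:topeff} with Theorem~\ref{thm:ext} and the connected sum results of section~\ref{sec:eqcs}.

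First I would dispose of the three other closed $S^2\times\R$--manifolds by passing to a canonical finite cover. If $M_0$ is non-orientable, its orientable double cover is diffeomorphic to $S^2\times S^1$; if $M_0\cong\R P^3\sharp\R P^3$, then the subset of $\pi_1(M_0)\cong D_\infty$ consisting of the identity together with all elements of infinite order is a characteristic subgroup of index two, so the associated double cover is a $\rho_0$--equivariant covering whose total space is again $\cong S^2\times S^1$. In each case $\rho_0$ lifts to an action $\hat\rho_0\co\hat G\acts S^2\times S^1$ of a finite group $\hat G$, and a $\hat G$--invariant $S^2\times\R$--metric descends to a $\rho_0$--invariant one on $M_0$. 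So it suffices to prove the theorem for $M_0\cong S^2\times S^1$ and an arbitrary finite group, which we henceforth assume, re-running the equivariant Ricci flow with cutoff on the cover if necessary.

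For $M_0\cong S^2\times S^1$ a forward induction on $k$, using that a surgery on a copy of $S^3$ produces only copies of $S^3$ and a surgery on a copy of $S^2\times S^1$ produces only copies of $S^3$ and $S^2\times S^1$, shows that every $M_k$ is a disjoint union of copies of $S^3$ and $S^2\times S^1$; in particular no $\R P^3$ ever splits off and step (i) of Theorem~\ref{thm:topeff} is always empty. Since the flow goes extinct in finite time we may, after recording the final extinction as a surgery time, assume $M_{k_{\max}}=\emptyset$. Now I would argue by downward induction on $k$ that $\rho_k$ is standard; the base case $k=k_{\max}$ is trivial. For the inductive step, Theorem~\ref{thm:topeff} (with step (i) vacuous) writes $\rho_{k-1}$ as the disjoint union of an equivariant connected sum of components of $M_k$ with the restriction of $\rho_{k-1}$ to the components of $M_{k-1}$ going extinct at $t_k$; the latter is standard by Theorem~\ref{thm:ext}, so it remains to check that an equivariant connected sum $(M')_{\mathcal P}$ of standard actions on copies of $S^3$ and $S^2\times S^1$ is standard on each of its components $N$. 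Such an $N$ is again $\cong S^3$ or $\cong S^2\times S^1$; if $N\cong S^3$ this is Proposition~\ref{decoirred}(i). If $N\cong S^2\times S^1$ the corresponding component of the associated graph is either a tree with exactly one $S^2\times S^1$--vertex and all others copies of $S^3$, or a connected graph with a single cycle and all vertices copies of $S^3$; in the first case that vertex is fixed by the relevant stabilizer and peeling the $S^3$--leaves, one orbit at a time, via Lemma~\ref{trivsumm} identifies $\rho|_N$ with the action on the $S^2\times S^1$--vertex, which is standard by the induction hypothesis.

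The remaining case --- $N\cong S^2\times S^1$ obtained as an equivariant connected sum of round $3$--spheres whose graph, after peeling $S^3$--leaves, is a single cycle --- is where the real work lies, and the plan is to build an invariant foliation of $N$ by $2$--spheres and quote Corollary~\ref{acts2s1}. Each summand $S^3_i$ carries an invariant round metric and $N$ is assembled by deleting two disjoint small round balls $B(x_i),B(y_i)\subset S^3_i$ and gluing successive boundary spheres; the stabilizer $H_i$ of the $i$--th vertex acts isometrically on $S^3_i$ and preserves the pair $\{x_i,y_i\}$. I would produce an $H_i$--equivariant foliation of $S^3_i-(B(x_i)\cup B(y_i))$ by $2$--spheres, with the two boundary spheres as leaves: when $x_i$ and $y_i$ are not antipodal the common fixed locus of $H_i$ contains the great circle through them, hence the antipodal pair of midpoints of the two arcs it cuts out, so $H_i$ lies in the subgroup $O(3)\oplus 1$ fixing that pair and preserves the corresponding latitude--sphere foliation of $S^3_i$, which one then modifies $H_i$--equivariantly near the symmetrically placed balls $B(x_i),B(y_i)$; when $x_i$ and $y_i$ are antipodal the complement is already a round product cylinder with $H_i$ preserving the latitude foliation. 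Choosing these foliations compatibly over each orbit of vertices and gluing them up (a boundary sphere being a single leaf, any gluing map automatically matches leaves to leaves) yields a $\rho|_N$--invariant foliation of $N\cong S^2\times S^1$ by $2$--spheres, so $\rho|_N$ is standard by Corollary~\ref{acts2s1}. I expect the delicate point to be precisely this equivariant modification of the latitude foliation across the removed balls, together with keeping track of the cyclic symmetry group acting on the chain of summands; everything else is assembled from results already established.
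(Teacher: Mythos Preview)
Your overall strategy matches the paper's closely: reduce to $M_0\cong S^2\times S^1$ via characteristic double covers, then use Theorems~\ref{thm:topeff} and~\ref{thm:ext} together with the connected--sum lemmas to work backwards, splitting into the two cases ``tree with a unique $S^2\times S^1$ vertex'' versus ``cycle of round $3$--spheres''. The paper organises the induction slightly differently (locating the maximal $k$ with an $S^2\times S^1$ component rather than a full downward induction), but this is cosmetic.

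There is, however, a real gap in your cycle case. You correctly observe that $H_i$ fixes the two midpoints $m_1,m_2$ of the great--circle arcs from $x_i$ to $y_{i-1}$, so $H_i$ lies in the copy of $O(3)$ fixing $\{m_1,m_2\}$. But the latitude foliation with poles $m_1,m_2$ is the wrong starting point: $x_i$ and $y_{i-1}$ are equidistant from both midpoints and hence lie on the \emph{same} latitude sphere. No latitude leaf separates them, the poles $m_1,m_2$ sit in the interior of $S^3_i-(B(x_i)\cup B(y_{i-1}))$ as critical points of the latitude function, and converting this into a foliation whose leaf space runs from $\partial B(x_i)$ to $\partial B(y_{i-1})$ is a global change, not a modification ``near the symmetrically placed balls''. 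The paper fixes exactly this by exploiting the same midpoint observation differently: since $H_i$ fixes $m_1$, the action is the spherical suspension of an orthogonal $H_i$--action on $S^2$, and in suspension coordinates $S^2\times(0,\pi)$ the pair $x_i,y_{i-1}$ sits at $(\pm e_1,\theta)$; an $H_i$--equivariant reparametrisation of the interval factor moves $\theta$ to $\pi/2$, which makes $x_i,y_{i-1}$ antipodal in the (new) round metric. Then $S^3_i-B_r(x_i)-B_r(y_{i-1})$ is a round cylinder and the invariant $S^2\times\R$--metric is assembled directly. With this adjustment your argument goes through.
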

\begin{proof}
(i) Suppose that $M_0\cong S^2\times S^1$.

If ${\mathcal M}$ goes extinct at time $t_1$,
then $\rho_0$ is standard by Theorem~\ref{thm:ext}.
We assume therefore that 
${\mathcal M}$ does not go extinct at time $t_1$.
Since $S^2\times S^1$ is prime, 
and in particular does not have $\R P^3$ as a connected summand,
Theorem~\ref{thm:topeff} yields that 
$\rho_0$ is an equivariant connected sum of $\rho_1$,
respectively,
$\rho_1$ is an equivariant connected sum decomposition of $\rho_0$ 
in the sense of section~\ref{sec:eqcs}.

We first consider the case
when the family of 2--spheres, 
along which $M_0$ is decomposed,
contains a non-separating sphere.
Then all components of $M_1$ are 3--spheres and 
the action $\rho_1$ is standard by Theorem E. 
Using the notation of section~\ref{sec:eqcs},
the graph $\Ga$ associated to the connected sum decomposition of $M_0$ 
is homotopy equivalent to a circle. 
Equivalently, 
it contains a unique embedded cycle $\ga$
(which may be a loop) 
which consists precisely of the non-separating edges. 
We divide the family ${\mathcal P}$ of two point subsets 
into the subfamily ${\mathcal P}_1$ corresponding to the edges of $\ga$ 
and its complement ${\mathcal P}_2$. 
Let $M_1'$ be the union of the components of $M_1$ 
corresponding to the vertices of $\ga$.
Note that $M_1'$ and the ${\mathcal P}_i$ are $\rho_1$--invariant
due to the uniqueness of $\ga$. 
We have 
$\rho_0\cong(\rho_1)_{\mathcal P}
\cong((\rho_1)_{{\mathcal P}_2})_{{\mathcal P}_1}$. 
Lemma~\ref{trivsumm} yields as in the proof of 
Proposition~\ref{decoirred} that 
$(\rho_1)_{{\mathcal P}_2}\cong\rho_1|_{M_1'}$. 
So $\rho_0\cong(\rho_1|_{M_1'})_{{\mathcal P}_1}$. 
We may therefore assume without loss of generality 
that $\ga=\Ga$, $M_1'=M_1$ and ${\mathcal P}_1={\mathcal P}$. 
Let us denote the component 3--spheres of $M_1$
by $S^3_i$ parametrized by the index set $\Z/l\Z$, $l\geq1$, 
and the cyclic numbering chosen 
so that $x_i\in S^3_i$ and $y_i\in S^3_{i+1}$. 
We observe that $G_i$ must fix also $y_{i-1}$ and $x_{i+1}$. 
(Recall the definition of $G_i$ from the beginning of 
section~\ref{sec:eqcs}.) 
So $G_{i-1}\supseteq G_i\subseteq G_{i+1}$
and thus $G_1=\dots=G_l$. 
The $\rho_1$--invariant spherical metric on $M_1$ 
may be arranged so that $x_i$ and $y_{i-1}$ are antipodal for all $i$. 
When performing the equivariant connected sum operation 
to obtain $\rho_0$ from $\rho_1$  
we choose, in a $\rho_1$--equivariant way, 
Riemannian metrics on the $S_i^3-B_r(x_i)-B_r(y_i)$ 
isometric to $S^2\times[-1,1]$. 
This yields a $\rho_0$--invariant metric on $M_0$ 
locally isometric to $S^2\times\R$. 
Thus $\rho_0$ is standard in this case, too. 

Suppose now that the family of 2--spheres, 
along which $M_0$ is decomposed,
consists only of separating spheres. 
Then the graph $\Ga$ is a tree. 
Moreover, 
$M_1$ contains a unique component $M_1^{(0)}\cong S^2\times S^1$
and all other components are 3--spheres. 
The action on the union of 3--spheres is standard 
according to Theorem E. 
One proceeds as in the proof of Proposition~\ref{decoirred} (ii) 
and concludes that 
$\rho_1|_{M_1^{(0)}}$ is smoothly conjugate to $\rho_0$. 

Let $k\geq0$ be maximal such that $M_k$ contains a component 
$M_k^{(0)}\cong S^2\times S^1$. 
The above argument shows that ${\rho_k}|_{M_k^{(0)}}$ is standard
and, using induction, that $\rho_0\cong{\rho_k}|_{M_k^{(0)}}$. 
It follows that $\rho_0$ is standard. 

(ii) Now suppose that $M_0\not\cong S^2\times S^1$. 
In order to reduce to the $S^2\times S^1$--case
we argue as follows,
compare the end of the proof of Theorem H
in section~\ref{sec:appellhyp} above.

There exists a two-sheeted covering $\hat M_0\to M_0$
such that $\hat M_0\cong S^2\times S^1$. 
Indeed, 
inside the deck transformation group 
$\Deck(M_0)\subset \Isom(S^2\times\R)$ 
we consider the subgroup 
$\Deck(M_0)\cap \Isom_o(S^2\times\R)$ 
consisting of the deck transformations
which preserve the orientations of both the $S^2$-- and the $\R$--factor. 
Since the deck action $\Deck(M_0)\acts S^2\times\R$ is free,
this subgroup has index at most two 
and it is the deck transformation group $\Deck(\hat M_0)$ 
for a covering $\hat M_0\to M_0$. 
Clearly, $\hat M_0\cong S^2\times S^1$ and the covering is two-sheeted. 
(There are only three closed connected $S^2\times\R$--manifolds
besides $S^2\times S^1$, 
namely $\R P^3\sharp\,\R P^3$, $\R P^2\times S^1$
and the non-orientable $S^2$--bundle $S^2\tilde\times S^1$,
and these are double covered by $S^2\times S^1$, 
compare Scott \cite[Page 457]{Scott}.) 

Diffeomorphisms of $M_0$ lift to diffeomorphisms 
of the universal cover $S^2\times\R$ 
which normalize $\Deck(M_0)$. 
Let $\Diff_{++}(S^2\times\R)$ 
denote the index four normal subgroup of $\Diff(S^2\times\R)$ 
consisting of those diffeomorphisms 
which preserve the ends and act trivially on 
$H_2(S^2\times\R,\Z)\cong\Z$. 
Note that 
$\Isom_o(S^2\times\R)=\Isom(S^2\times\R)\cap \Diff_{++}(S^2\times\R)$. 
Thus the normalizer of $\Deck(M_0)$ in $\Diff(S^2\times\R)$
also normalizes 
$\Deck(\hat M_0)=\Deck(M_0)\cap \Diff_{++}(S^2\times\R)$. 
It follows that diffeomorphisms of $M_0$ 
lift to diffeomorphisms of $\hat M_0$, 
and the action $\rho_0$ lifts to an action 
$\hat\rho_0\co \hat G\acts\hat M_0$ of an index two extension 
$\hat G$ of $G$. 
By part (i) of the proof  
there exists a $\hat\rho_0$--invariant $S^2\times\R$--structure 
on $\hat M_0$.
It descends to a $\rho_0$--invariant $S^2\times\R$--structure on $M_0$. 
\end{proof}


\begin{thebibliography}{33}

\bibitem{Bamler}
R.\ Bamler,
{\em Ricci flow with surgery},
diploma thesis, M\"unchen 2007. 

\bibitem{orbi}
M.\ Boileau, B.\ Leeb, J.\ Porti,
{\em Geometrization of 3--dimensional orbifolds}, 
Ann.\ Math.\ (2) 162, No. 1, 195--290 (2005). 

\bibitem{Brody}
E.J.\ Brody, 
{\em The topological classification of the lens spaces}, 
Ann.\ Math.\ (2) 71, 163--184 (1960).

\bibitem{soul}
J.\ Cheeger, D.\ Gromoll,
{\em On the structure of complete manifolds of nonnegative curvature}, 
Ann.\ Math.\ (2) 96, 413--443 (1972).

\bibitem{ColdingMinicozzi_ext}
T.\ Colding, W.\ Minicozzi,
{\em Estimates for the extinction time for the Ricci flow
on certain 3--manifolds and a question of Perelman},
J.\ Amer.\ Math.\ Soc.\ 18 no.\ 3 (2005), 561--569. 

\bibitem{ColdingMinicozzi_width}
T.\ Colding, W.\ Minicozzi,
{\em Width and finite extinction time of Ricci flow}, 
Geometry \& Topology 12, 2537--2586 (2008). 

\bibitem{diss}
J.\ Dinkelbach,
{\em Equivariant Ricci flow with surgery},
PhD thesis, 
M\"unchen 2008.

\bibitem{Epstein_Curves}
D.~B.~A. Epstein, \emph{Curves on {$2$}-manifolds and isotopies}, Acta Math.
115 (1966), 83--107.

\bibitem{poscurv}
D.\ Gromoll, W.\ Meyer, 
{\em On complete open manifolds of positive curvature}, 
Ann.\ Math.\ (2) 90, 75--90, (1969).

\bibitem{GroveKarcher}
K.\ Grove, H.\ Karcher, 
{\em How to conjugate ${\mathcal C}^1$--close group actions},
Math.\ Z.\ 132, 11--20 (1973).

\bibitem{GroveShiohama}
K.\ Grove, K.\ Shiohama, 
{\em A generalized sphere theorem}, 
Ann.\ Math.\ (2) 106, 201--211 (1977).

\bibitem{Hamilton_posric} 
R.\ Hamilton, 
{\em Three-manifolds with positive Ricci curvature}, 
J.\ Diff.\ Geom.\ 17, 255--306 (1982).

\bibitem{Hatcher}
A.\ Hatcher,
{\em Notes on basic 3--manifold topology}, 
2000, see \\
http://www.math.cornell.edu/~hatcher/3M/3Mdownloads.html

\bibitem{HirschSmale}
M.\ Hirsch, S.\ Smale, 
{\em On involutions of the 3--sphere}, 
Am.\ J.\ Math.\ 81, 893--900 (1959).

\bibitem{KL}
B.\ Kleiner, J.\ Lott, 
{\em Notes on Perelman's papers}, 
Geometry \& Topology 12, 2587--2855 (2008).

\bibitem{Livesay}
G.\ Livesay, 
{\em Involutions with two fixed points on the three-sphere}, 
Ann.\ Math.\ (2) 78, 582--593 (1963).

\bibitem{MeeksScott}
W.\ Meeks, P.\ Scott,
{\em Finite group actions on 3--manifolds},
Invent.\ Math.\ 86, 287--346 (1986).

\bibitem{MeeksYau}
W.\ Meeks, S.-T.\ Yau, 
{\em Group actions on $\R^3$}, 
ch.\ IX in: 
{\em The Smith conjecture}, 
Pure Appl.\ Math.\ 112, 167--180 (1984). 

\bibitem{Meyer}
W.\ Meyer,
{\em Toponogov's Theorem and Applications}, 
Lecture Notes, Trieste 1989,
see http://wwwmath.uni-muenster.de/u/meyer/publications/toponogov.html

\bibitem{MorganTian_poinc}
J.\ Morgan, G.\ Tian, 
{\em Ricci Flow and the Poincar\'e Conjecture},
Mathematics arXiv, 
arXiv:math.DG/0607607v2, 2007.

\bibitem{MorganTian_geom}
J.\ Morgan, G.\ Tian, 
{\em Completion of the proof of the Geometrization Conjecture}, 
Mathematics arXiv, 
arXiv:0809.4040, 2008.

\bibitem{Mostow}
G.D.\ Mostow, 
{\em Quasi-conformal mappings in $n$--space 
and the rigidity of hyperbolic space forms},
Publ.\ Math., Inst.\ Hautes \'Etud.\ Sci.\ 34, 53--104 (1968). 

\bibitem{Munkres}
J.\ Munkres, 
{\em Differentiable isotopies on the 2--sphere}, 
Mich.\ Math.\ J.\ 7, 193--197 (1960).

\bibitem{Palais}
R.\ Palais, 
{\em Equivalence of nearby differentiable actions of a compact group},
Bull.\ Am.\ Math.\ Soc.\ 67, 362--364 (1961). 

\bibitem{Perelman_entropy}
G.\ Perelman,
{\em The entropy formula for the Ricci flow and its geometric applications},
Mathematics ArXiv math.DG/0211159,
2002.

\bibitem{Perelman_surgery}
G.\ Perelman,
{\em Ricci flow with surgery on three-manifolds},
Mathematics ArXiv math.DG/0303109,
2003.

\bibitem{Perelman_extinction}
G.\ Perelman,
{\em Finite extinction time
for the solutions to the Ricci flow on certain three-manifolds},
Mathematics ArXiv math.DG/0307245,
2003.

\bibitem{Prasad}
G.\ Prasad,
{\em Strong rigidity of Q-rank 1 lattices}, 
Invent.\ Math.\ 21, 255--286 (1973). 

\bibitem{Scott}
P.\ Scott,
{\em The geometries of 3--manifolds}, 
Bull.\ Lond.\ Math.\ Soc.\ 15, 401--487 (1983).

\bibitem{Serre}
J.P.\ Serre,
{\em Linear representations of finite groups},
Springer, GTM 42, 1977.

\bibitem{Thurston_symm}
W.\ Thurston,
{\em Three-manifolds with symmetry}, 
preprint.

\bibitem{Thurston_Bull}
W.\ Thurston, 
{\em Three dimensional manifolds, Kleinian groups and hyperbolic geometry}, 
Bull.\ Am.\ Math.\ Soc., vol.\ 6, 357--379 (1982). 

\bibitem{Thurston3M}
W.\ Thurston,
{\em Three-Dimensional Geometry and Topology}, 
Princeton, 1997.

\end{thebibliography}
\end{document}